\numberwithin{equation}{section}
\newtheorem{theorem}{Theorem}
\newtheorem*{theorem*}{Theorem}
\newtheorem{lemma}{Lemma}
\newtheorem*{lemma*}{Lemma}
\newtheorem{corollary}{Corollary}
\newtheorem*{corollary*}{Corollary}
\newtheorem*{propositon*}{Proposition}
\theoremstyle{definition}
\newtheorem{definition}{Definition}
\newtheorem*{definition*}{Definition}
\newtheorem{remark}{Remark}
\newtheorem*{remark*}{Remark}
\newtheorem*{example*}{Example}
\begin{document}
\title[Relations of multiple $\tilde{T}$-values]{Relations of multiple $\tilde{T}$-values involving the total numbers of certain permutations}
\author{RYOTA UMEZAWA}
\address{Chubu University\\
1200 Matsumoto-cho, Kasugai-shi, Aichi 487-8501, JAPAN.}
\email{cu38599@fsc.chubu.ac.jp}
\subjclass[2020]{Primary 11M32, Secondary 05A05}
\keywords{Multiple $T$-values, Multiple $\tilde{T}$-values, down-up permutations, Dumont permutations,  Entringer numbers}
\date{}
\maketitle

\begin{abstract}
Kaneko and Tsumura proved a relation of multiple $\tilde{T}$-values involving Entringer numbers counting the total number of down-up permutations starting with a fixed value. In the present paper, we generalize this relation and provide some relations involving Entringer numbers and the total number of Dumont permutations of the first kind starting with a fixed value. For this purpose, we also provide explicit formulas for the total numbers of those permutations.
\end{abstract}
\section{Introduction}\label{se:intro}
Multiple ${T}$-values and Multiple $\tilde{T}$-values are introduced by Kaneko and Tsumura as follows:
\begin{definition}[Multiple $T$-values (Kaneko-Tsumura \cite{KT1, KT})] For an index $\mathbf{k}=(k_{1},\dots,k_{r}) \in \mathbb{Z}^{r}_{ \ge 1}$ with $k_{r} > 1$,
\begin{align*}
T(\mathbf{k}) = 2^{r}\sum_{\substack{0<m_{1}<\cdots<m_{r}\\m_{j}\equiv j\ {\rm mod}\ 2}}\frac{1}{m^{k_{1}}_{1}\cdots m^{k_{r}}_{r}}.
\end{align*}
\end{definition}
\begin{definition}[Multiple $\tilde{T}$-values (Kaneko-Tsumura \cite{KT3})] For an index $\mathbf{k}=(k_{1},\dots,k_{r}) \in \mathbb{Z}^{r}_{ \ge 1}$,
\begin{align*} 
\tilde{T}(\mathbf{k}) = 2^{r}\sum_{\substack{0<m_{1}<\cdots<m_{r}\\m_{i}\equiv i\ {\rm mod}\ 2}}\frac{(-1)^{(m_{r}-r)/2}}{m^{k_{1}}_{1}\cdots m^{k_{r}}_{r}}.
\end{align*}
\end{definition}
We focus on the remarkable result
\begin{align}\label{eq:KTrel}
\sum_{j=1}^{n} \mathbb{E}(n,j)\, \tilde{T}(\underbrace{1,\dots,1,\overset{\tiny j}{\check{2}},1,\dots,1}_{n}) =  
\begin{cases} 
\tilde{T}(n+1) &(n: \textup{odd}),\\
T(n+1)& (n: \textup{even})
\end{cases}
\end{align}
proved in \cite{KT3}.
Here, $\mathbb{E}(n,j)$ denotes the Entringer numbers, which counts the number of
down-up permutations (alternating permutations) i.e.,
\[\sigma(i) < \sigma(i+1) \iff i \text{ is even},\]
in the symmetric group $\mathfrak{S}_{n+1}$ starting with $j + 1$. For example,
\[\mathbb{E}(4,1) = |\{(\sigma(1),\dots,\sigma(5)) = (2,1,5,3,4), (2,1,4,3,5)\}| = 2.
\]
In the present paper, we generalize the relation \eqref{eq:KTrel} and provide some relations of multiple $\tilde{T}$-values, which involve not only the Entringer numbers but also the total number of Dumont permutations (of the first kind). Dumont permutations, named after Dumont's study \cite{D}, are defined as permutations where the descents occur after the even numbers and the ascents after the odd numbers i.e.,
\[\sigma(i) < \sigma(i+1) \iff \sigma(i) \text{ is odd}.\]
Let $\mathbb{G}(k,j)$ be the total number of Dumont permutations in $\mathfrak{S}_{k-1}$ starting with $j$. For example,
\[\mathbb{G}(5,2) = |\{(\sigma(1),\dots,\sigma(4)) = (2,1,3,4), (2,1,4,3)\}| = 2.
\]
Then, the main theorems obtained in this paper are listed as follows:
\begin{theorem}\label{th:Tt+T=EG}
When $k_{1} \ge 1$ and $k_{2} \ge 3$ are both odd, we have
\begin{align*}
&\sum_{l=1}^{k_{1}} \sum_{m=1}^{k_{2}} \binom{k_{1}-l+m}{m} \binom{k_{2}-m+l}{l} \mathbb{E}(k_{1},l)\, \mathbb{G}(k_{2}, m)\, \tilde{T}(\underbrace{1,\dots,1,\hspace{-4mm}\overset{\tiny k_{1}-l+m}{\check{2}}\hspace{-4mm},1,\dots,1}_{k_{1}+k_{2}})\\
&= {T}(k_{1}+1,k_{2}) + \tilde{T}(k_{1}+1,k_{2}).
\end{align*}
\end{theorem}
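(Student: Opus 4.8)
The plan is to move everything to iterated integrals on $(0,1)$ and to read the double sum as the result of two coupled resummations, one carried out by the Entringer numbers and one by the Dumont numbers. Write $\omega_0=\frac{dt}{t}$ and introduce binding forms $\omega_+=\frac{2\,dt}{1-t^2}$ and $\omega_-=\frac{2\,dt}{1+t^2}$, so that $T(k)=\int_0^1\omega_0^{k-1}\omega_+$ and $\tilde T(k)=\int_0^1\omega_0^{k-1}\omega_-$ in depth one; in higher depth the binding forms alternate in parity (the even-index forms being $\frac{2t\,dt}{1\mp t^2}$) and only the top one carries the global sign of the $\tilde T$-definition. In this language an entry $k_i=2$ contributes exactly one extra $\omega_0$ before its binding form, so $\tilde T$ of the all-ones index with a single $2$ in position $p$ is the integral of the word obtained from the all-binding-forms word of length $k_1+k_2$ by inserting one $\omega_0$ before its $p$-th letter. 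Crucially, the relation \eqref{eq:KTrel} is itself such a resummation: with $n=k_1$ it collapses the Entringer-weighted family of depth-$k_1$ single-$2$ integrals into a single depth-one value of weight $k_1+1$. The substitution $t=\tan\theta$ (sending $\omega_-\mapsto 2\,d\theta$ and the even top form to $2\tan\theta\,d\theta$) is the mechanism through which the tangent/secant and Genocchi generating functions, hence the permutation counts, enter.

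I would next isolate and prove the Dumont analogue of \eqref{eq:KTrel}: a resummation in which $\sum_m\mathbb{G}(k_2,m)\,(\cdots)$ collapses the second, weight-$k_2$ block. For this I would use the explicit formula for $\mathbb{G}(k_2,m)$, the total number of Dumont permutations of $\mathfrak{S}_{k_2-1}$ starting with $m$, which I would derive from the Seidel/Genocchi triangle attached to the generating function of Dumont permutations, and feed it into the integral. The target is that this block is governed by the symmetric even form $\frac{2t\,dt}{1-t^2}+\frac{2t\,dt}{1+t^2}=\frac{4t\,dt}{1-t^4}$, i.e.\ the selection $m_2\equiv2\pmod4$ in the top slot; this is exactly the form whose depth-two completion is $T(k_1+1,k_2)+\tilde T(k_1+1,k_2)$ rather than either summand alone.

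I would then assemble the two resummations into one. The point is that the coefficients $\binom{k_1-l+m}{m}\binom{k_2-m+l}{l}$, together with the coupling $p=k_1-l+m$ between the two summation variables and the position of the inserted $2$, are precisely the interleaving counts that merge the first block (indexed by $l$, carrying $k_1-l$ of its $\omega_0$'s) and the second block (indexed by $m$, carrying $k_2-m$ of its $\omega_0$'s) into one word: $\binom{p}{m}$ and $\binom{k_1+k_2-p}{l}$ count the placements of the two blocks' $\omega_0$'s to the left and right of the inserted letter, and are the Cauchy coefficients of the negative-binomial series $\sum_m\binom{a+m}{m}x^m=(1-x)^{-a-1}$. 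Summing first over $m$ (Dumont) resolves the second block into the weight-$k_2$ component with top form $\frac{4t\,dt}{1-t^4}$, and summing over $l$ (Entringer, via \eqref{eq:KTrel}) resolves the first block into the weight-$(k_1+1)$ component, the extra unit of weight being the inserted $\omega_0$. The resulting depth-two integral is $T(k_1+1,k_2)+\tilde T(k_1+1,k_2)$.

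The main obstacle is the interaction of the two parities. Taken in isolation, \eqref{eq:KTrel} with $n=k_1$ odd produces the signed (odd) top form, whereas in the assembled right-hand side the first slot is unsigned; reconciling this requires tracking how the second (Dumont) block twists the global sign of the $\tilde T$-integrand, and it is exactly this interaction that forces the symmetric combination $T+\tilde T$ in the output. Establishing the Dumont resummation with the correct sign, and proving that the hypotheses $k_1,k_2$ odd are what put both blocks in the tangent/Genocchi regime so that the signs combine additively, is the technical heart of the argument; the interleaving bookkeeping of the previous paragraph, by contrast, I expect to be routine once the two one-block identities are in place. An alternative, more combinatorial route would bypass the integrals by combining the explicit formulas for $\mathbb{E}(k_1,l)$ and $\mathbb{G}(k_2,m)$ with the shuffle and stuffle relations among $T$- and $\tilde T$-values, but the same parity matching would reappear there.
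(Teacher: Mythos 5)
Your proposal is a plan rather than a proof, and two of its load-bearing steps are not merely unproven but, as described, structurally problematic. First, the assembly step. You read the coefficients $\binom{k_1-l+m}{m}\binom{k_2-m+l}{l}$ as interleaving counts that ``merge'' a weight-$(k_1+1)$ block and a weight-$k_2$ block into one word, and you invoke the negative-binomial/Cauchy identity to resolve each block separately. But summing an iterated integral over all interleavings of two subwords is the shuffle product, and the shuffle product of two depth-one integrals yields a \emph{product} of values such as $\tilde{T}(k_1+1)\,\tilde{T}(k_2)$ --- not the concatenated depth-two value $T(k_1+1,k_2)+\tilde{T}(k_1+1,k_2)$ that the theorem asserts. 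Nothing in your sketch supplies the mechanism converting the Hoffman-type indices $(\{1\}^{p-1},2,\{1\}^{k_1+k_2-p})$ into a genuine depth-two index. In the paper this is done by the conjugation--duality formula $T(\mathbf{k})=\sum_{h}A(\mathbf{k}^{(h)};i)\,\overline{A((\mathbf{k}^{\dag})^{(|\mathbf{k}|-h)};i)}$ from \cite{U3}, applied to $\mathbf{k}=(k_1+1,k_2+1)$: the ``$2$'' in the long indices comes from the dual $\mathbf{k}^{\dag}$, the binomials come from shuffling strings of $1$'s into that dual word (Lemmas \ref{lem;calT=CT} and \ref{lem; TtcalT}), and the Genocchi numbers enter by multiplying the resulting power-series identity by the generating functions of $C_n$ and $D_n$ and subtracting. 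The relation \eqref{eq:KTrel} cannot play this role: it is an identity between numbers reducing a depth-$n$ family to a single depth-one value, and it cannot be applied ``to one block'' of a longer iterated integral while the other block stays attached, especially since the position $p=k_1-l+m$ of the inserted $\omega_0$ couples the two summation variables.

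Second, the ``Dumont analogue of \eqref{eq:KTrel}'' --- which you yourself identify as the technical heart --- is left entirely open, and even its statement is not pinned down. What is actually needed is the identification $\mathbb{G}(k_2,m)=(-1)^{\lfloor(k_2-1)/2\rfloor}\sum_{n=1}^{m}\binom{m}{n}G_{k_2-n}$ for odd $k_2$ (Theorem \ref{th:BG=}), whose proof via the auxiliary numbers $H(k,j)$ is the longest argument in Section \ref{se:DD}, together with the fact that these Genocchi sums are exactly the residual coefficients after the generating-function step. Your parity bookkeeping is also not under control: a single form $\frac{4t\,dt}{1-t^4}$ cannot select $m_2\equiv 2\pmod 4$ uniformly, since the required residue of $m_2-m_1-1$ modulo $4$ depends on $m_1\bmod 4$ (for $m_1\equiv 1$ one needs exponents $\equiv 0$, for $m_1\equiv 3$ exponents $\equiv 2$). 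Until the duality input and the Dumont identification are supplied, the interleaving bookkeeping you call routine has nothing to act on.
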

\begin{theorem} \label{th:Tt+T=G} For odd $k_{2} \ge 3$, we have
\begin{align*}
&\sum_{m_{2}=1}^{k_{2}} \mathbb{G}(k_{2},m_{2})\sum_{m_{1} = m_{2}}^{k_{2}}  \tilde{T}(\underbrace{1,\dots,1,1\hspace{-0mm}\overset{\tiny m_{2}}{\check{+1}}\hspace{-0mm},1,\dots,1,1\hspace{-0mm}\overset{\tiny m_{1}}{\check{+1}}\hspace{-0mm},1,\dots,1}_{k_{2}})\\
&= \tilde{T}(1) T(1,k_{2}) + \tilde{T}(1) \tilde{T}(1,k_{2}).
\end{align*}
\end{theorem}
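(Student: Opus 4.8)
The plan is to pass to the iterated-integral representation of multiple $\tilde{T}$- and $T$-values and to read the double summation together with the Dumont weights as a resummation inside the integral. Both sides have weight $k_2+2$: each summand on the left is a single $\tilde{T}$-value of index-length $k_2$ with two entries raised by $1$, while on the right $\tilde{T}(1)$ has weight $1$ and $T(1,k_2)+\tilde{T}(1,k_2)$ has weight $k_2+1$. In the normalization where the prefactors $2^{r}$ are absorbed into the boundary forms, each summand $\tilde{T}(1,\dots,\overset{m_2}{\check{+1}},\dots,\overset{m_1}{\check{+1}},\dots,1)$ is an iterated integral of length $k_2+2$ over $0<t_1<\cdots<t_{k_2+2}<1$ whose integrand is a word in $\frac{dt}{t}$, $\frac{dt}{1+t^2}$ and $\frac{dt}{1-t^2}$: the all-ones backbone contributes a string of the level-two forms $\frac{dt}{1\pm t^2}$, the signs being dictated by the parity constraint $n_i\equiv i \pmod 2$, and each of the two raised entries at positions $m_2\le m_1$ inserts one extra $\frac{dt}{t}$. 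First I would fix this representation, pinning down which boundary form occupies each slot.

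Next I would interchange the finite sums with the integral and perform them on the integrand. The sum over $m_1=m_2,\dots,k_2$ moves the inserted $\frac{dt}{t}$ through the tail of the word and can be carried out in closed form; the essential step is the remaining sum over $m_2$ weighted by $\mathbb{G}(k_2,m_2)$, where the combinatorics of Dumont permutations enters. Here I would invoke the explicit formula for $\mathbb{G}(k_2,m_2)$ proved earlier, together with the exponential generating function of the Dumont permutations of the first kind --- a tangent/Genocchi-type series --- to resum the $\mathbb{G}$-weighted family of iterated integrals of the level-two forms into a product. The mechanism is that the all-ones iterated integrals of $\frac{dt}{1\pm t^2}$ are exactly the iterated coefficients attached to $\tan$ and its companions, and the Dumont numbers are precisely the coefficients that reorganize them multiplicatively.

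Concretely, I expect one factor $\int_0^1\frac{2\,dt}{1+t^2}=\tilde{T}(1)$ to split off, leaving an iterated integral that I would identify with $T(1,k_2)+\tilde{T}(1,k_2)$. The two parity sign patterns $1$ and $(-1)^{(n-1)/2}$ combine to retain only the terms whose top index is $\equiv 2\pmod 4$, which is exactly the combination $T(1,k_2)+\tilde{T}(1,k_2)$; this is transparent on the series side, since $T(1,k_2)+\tilde{T}(1,k_2)=8\sum 1/(q_1q_2^{k_2})$, the sum taken over odd $q_1<q_2$ with $q_2\equiv 2\pmod 4$. Matching the split-off factor with $\tilde{T}(1)$ and the remainder with $T(1,k_2)+\tilde{T}(1,k_2)$ then gives the theorem.

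The main obstacle will be this resummation: showing that the $\mathbb{G}(k_2,m_2)$-weighted sum factors cleanly as $\tilde{T}(1)\bigl(T(1,k_2)+\tilde{T}(1,k_2)\bigr)$ rather than as an unwieldy combination of higher-depth $\tilde{T}$-values. This rests on matching the alternating signs of the boundary forms with the defining ``descent after even, ascent after odd'' pattern of Dumont permutations, so that the generating function collapses the double sum with no residual lower-depth corrections. Carrying the signs $(-1)^{(n-r)/2}$ faithfully through both the inner summation over $m_1$ and the Dumont resummation, and checking that the degenerate diagonal $m_1=m_2$ (which produces a $3$ in the index) is accounted for correctly, is the delicate part of the argument.
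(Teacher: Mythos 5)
There is a genuine gap at the center of your plan. The identity you are asked to prove is not a pure shuffle relation among $\tilde{T}$-values: the left-hand side consists of iterated integrals along the path from $0$ to $i$, while the right-hand side contains $T(1,k_{2})$, an iterated integral along the path from $0$ to $1$. No resummation of the left-hand integrands alone --- however cleverly you exploit the generating function of the Genocchi numbers --- can produce a $T$-value; you need an external input relating the two paths. In the paper this input is the duality-type theorem from \cite{U3}, $T(\mathbf{k}) = \sum_{h} A(\mathbf{k}^{(h)};i)\overline{A((\mathbf{k}^{\dag})^{(|\mathbf{k}|-h)};i)}$, which after forming a generating function in $k_{r}$ and multiplying by the exponential generating functions of $C_{n}$ and $D_{n}$ yields equation \eqref{eq: TTt r=2 odd imag}; specializing to $k_{1}=0$, $k_{2}$ odd and subtracting the two cases produces $G_{n}=C_{n}-D_{n}$ on one side and $-(\pi/2)\bigl(T(1,k_{2})+\tilde{T}(1,k_{2})\bigr)$ on the other. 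Your proposal never identifies this bridge, and instead defers the whole difficulty to an unproved claim that ``the generating function collapses the double sum with no residual lower-depth corrections.'' That claim is precisely the theorem, so the argument is circular at its key step.

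Two further points. First, your description of the integrand is off: in the representation used here each component $k_{j}$ of the index contributes one form $\frac{2\,dt}{1-t^{2}}$ followed by $k_{j}-1$ copies of $\frac{dt}{t}$, and after the substitution $t\mapsto it$ every level-two form becomes $\frac{2\,dt}{1+t^{2}}$ uniformly --- there is no alternation between $\frac{dt}{1+t^{2}}$ and $\frac{dt}{1-t^{2}}$ dictated by the parity constraint. Second, the role of Dumont permutations is more modest than your sketch suggests: they enter only at the last step, via Theorem \ref{th:BG=}, to rewrite the binomial--Genocchi sums $\sum_{n=1}^{m_{2}}\binom{m_{2}}{n}G_{k_{2}-n}$ as $(-1)^{\lfloor (k_{2}-1)/2\rfloor}\mathbb{G}(k_{2},m_{2})$; the double sum over $m_{1}\ge m_{2}$ itself comes from the shuffle expansion of $\tilde{T}(\{1\}^{n})\,\tilde{T}(\{1\}^{k_{2}-n-2},3)$ (Lemma \ref{lem;calT=CT}), not from an independent closed-form evaluation of the $m_{1}$-sum. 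Your series-side identity $T(1,k_{2})+\tilde{T}(1,k_{2})=8\sum 1/(q_{1}q_{2}^{k_{2}})$ with $q_{2}\equiv 2 \pmod 4$ is correct, but it is an observation about the target, not a mechanism for reaching it.
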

\begin{theorem}\label{th:TorTt1=EE}
For odd $k_{1} \ge 1$, we have
\begin{align*}
&\tilde{T}(1) \sum_{\substack{l_{1}+l_{2}+l_{3} = k_{1} \\ l_{1}>0}} \sum_{\substack{m_{1}+m_{2}+m_{3} = k_{3} \\ m_{3} > 0}} \binom{l_{1}+m_{1}}{l_{1}} \binom{l_{2}+m_{2}}{l_{2}} \binom{l_{3}+m_{3}}{l_{3}}\\
&\quad \times \mathbb{E}(k_{1},l_{1})\,\mathbb{E}(k_{3},m_{3})\, \tilde{T}(\underbrace{1,\dots,1,1\hspace{-2mm}\overset{\tiny l_{3}+m_{3}}{\check{+1}}\hspace{-2mm},1,\dots,1,1\hspace{-7mm}\overset{\tiny l_{3}+m_{3}+l_{2}+m_{2}}{\check{+1}}\hspace{-7mm},1,\dots,1}_{k_{1}+k_{3}})\\
&=\begin{cases}
\tilde{T}({k}_{1}+1,1) \tilde{T}({k}_{3}+1)-\tilde{T}(k_{1}+1,1,k_{3}+1) & (k_{3}:\textup{odd}),\\
\tilde{T}({k}_{1}+1,1) {T}({k}_{3}+1)+{T}(k_{1}+1,1,k_{3}+1) & (k_{3}:\textup{even}).
\end{cases}
\end{align*}
\end{theorem}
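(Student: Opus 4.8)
The plan is to read every value through its iterated-integral representation and to treat the Kaneko--Tsumura relation \eqref{eq:KTrel} as the engine that converts a single-index value into an Entringer-weighted sum of ``all ones with one $2$'' values. After moving the triple value to the left, it suffices to evaluate the product
\[
\tilde{T}(k_{1}+1,1)\cdot \tilde{T}(k_{3}+1)\quad\text{or}\quad \tilde{T}(k_{1}+1,1)\cdot T(k_{3}+1)
\]
and to show that it equals $\tilde{T}(1)$ times the double Entringer sum on the left-hand side, plus (for $k_{3}$ odd) or minus (for $k_{3}$ even) the triple value $(\tilde T/T)(k_{1}+1,1,k_{3}+1)$. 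The two Entringer numbers will enter by rewriting the two single blocks via \eqref{eq:KTrel}, the three binomial coefficients will be the signatures of a shuffle, and the case split on the parity of $k_{3}$ together with the sign in front of the triple value will come directly from the parity dichotomy already present in \eqref{eq:KTrel}. Note that both sides carry weight $k_{1}+k_{3}+3$, the lone factor $\tilde{T}(1)$ (weight one) absorbing exactly the discrepancy.

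First I would record the iterated-integral (hence shuffle) representation of multiple $T$- and $\tilde T$-values, writing each index block $k_{i}$ as a string $\omega_{0}^{\,k_{i}-1}$ followed by one separator form, with $\omega_{0}=dt/t$; in this language a lone $\tilde{T}(1)$ is a single outermost separator, which is what will survive as the explicit prefactor. Next I would apply \eqref{eq:KTrel} with $n=k_{3}$ to replace $\tilde{T}(k_{3}+1)$ or $T(k_{3}+1)$, according to the parity of $k_{3}$, by $\sum_{m=1}^{k_{3}}\mathbb{E}(k_{3},m)\,\tilde{T}(1^{m-1},2,1^{k_{3}-m})$. Since the substituted expression is the \emph{same} Entringer sum of $\tilde T$-values irrespective of the parity of $k_{3}$, this is the precise origin of the clean case split, the parity surviving only in the $T$-versus-$\tilde T$ label of the single value we started from. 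Applying the analogous extension of \eqref{eq:KTrel} carrying a trailing component (here $k_{1}$ is odd, so the $\tilde T$-case) to the leading block of $\tilde{T}(k_{1}+1,1)$ introduces the second Entringer factor $\mathbb{E}(k_{1},l_{1})$ while leaving the final slot intact.

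With both single values so expanded, the product becomes a sum of shuffles of two ``one-$2$'' words. Interleaving their all-ones segments in the three gaps cut out by the two resulting $2$'s produces exactly the three binomials $\binom{l_{1}+m_{1}}{l_{1}}\binom{l_{2}+m_{2}}{l_{2}}\binom{l_{3}+m_{3}}{l_{3}}$ with $l_{1}+l_{2}+l_{3}=k_{1}$ and $m_{1}+m_{2}+m_{3}=k_{3}$; this is the double sum on the left, the positions $l_{3}+m_{3}$ and $l_{3}+m_{3}+l_{2}+m_{2}$ of the two $2$'s being precisely the cumulative gap lengths. The unique shuffle term in which the $k_{3}$-word is concatenated after the $k_{1}$-word without interleaving is the triple value $(\tilde T/T)(k_{1}+1,1,k_{3}+1)$; peeling it off yields the stated correction term, and the separator common to both words remains as the factor $\tilde{T}(1)$.

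The main obstacle is the combinatorial matching in the previous step: one must verify that summing the shuffle over the three gaps reproduces exactly the triple-binomial weight, that the two Entringer expansions combine without spurious cross terms, and that the single concatenation term is correctly isolated. The delicate point is the parity and sign bookkeeping at the outermost integration, where the tangent--secant structure underlying the Entringer numbers and the $T\pm\tilde T$ selection of residues modulo $4$ must be reconciled to produce precisely the minus sign with $\tilde T$ for $k_{3}$ odd and the plus sign with $T$ for $k_{3}$ even.
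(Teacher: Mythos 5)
There is a genuine gap, and it sits at the very first step of your plan. You propose to expand $\tilde{T}(k_{1}+1,1)$ by ``the analogous extension of \eqref{eq:KTrel} carrying a trailing component.'' No such extension is available: \eqref{eq:KTrel} applies only to the single values $\tilde{T}(n+1)$ and $T(n+1)$, and producing an Entringer-weighted expansion of a depth-two value such as $\tilde{T}(k_{1}+1,1)$ is essentially the content of the theorem you are trying to prove (the paper emphasizes that the inductive proof of \eqref{eq:KTrel} ``seems difficult to generalize,'' which is why it builds the whole generating-function apparatus of Theorems \ref{th: =E}, \ref{th: =F} and Lemma \ref{lem; TtcalT}). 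Assuming that expansion makes the argument circular. Moreover, even granting both expansions, the shuffle bookkeeping you describe does not produce the left-hand side. Shuffling the expanded word for $\tilde{T}(k_{1}+1,1)$ (depth $k_{1}+1$) with the expanded word for $\tilde{T}(k_{3}+1)$ (depth $k_{3}$) yields $\tilde{T}$-values of depth $k_{1}+k_{3}+1$ with three or more entries $\ge 2$ in general, whereas the target is the \emph{scalar} $\tilde{T}(1)=\pi/2$ multiplying depth-$(k_{1}+k_{3})$ values with exactly two entries equal to $2$. A shuffle product over a common integration interval leaves no ``common separator'' behind as a prefactor; in the paper the factor $\pi/2$ arises arithmetically, as the leftover power in $\tilde{T}(m+1)=\frac{\mathbb{E}_{m}}{m!}(\pi/2)^{m+1}$ versus $\tilde{T}(\{1\}^{m})=(\pi/2)^{m}/m!$ inside Lemma \ref{lem; TtcalT}. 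Similarly, the concatenation term of your shuffle is a high-depth all-ones-with-two-$2$'s value, not the depth-three value $\tilde{T}(k_{1}+1,1,k_{3}+1)$, so the correction term cannot be ``peeled off'' as you describe; and the sign dichotomy ($-$ with $\tilde{T}$ for $k_{3}$ odd, $+$ with $T$ for $k_{3}$ even) is deferred rather than proved.

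For comparison, the paper's route is: specialize the duality-type identity $T(\mathbf{k})=\sum_{h}A(\mathbf{k}^{(h)};i)\overline{A((\mathbf{k}^{\dag})^{(|\mathbf{k}|-h)};i)}$ to $\mathbf{k}=(k_{1}+1,1,k_{3}+1)$ after multiplying the generating-function form \eqref{eq: fps} by the generating functions of $E_{n}$ and $F_{n}$ (Theorems \ref{th: =E} and \ref{th: =F}); take imaginary parts according to the parity of $k_{1}+k_{3}$ (this is where the signs and the $T$-versus-$\tilde{T}$ split come from); convert the resulting sums with Lemma \ref{lem;calT=CT} and Lemma \ref{lem; TtcalT}, whose shuffle computations produce the three binomial coefficients and the indices with two checked entries (these are duals of $(k_{1}+1,1,k_{3}+1)$ and its shifts, which is why they have high depth); and finally identify $\sum_{h}\binom{l_{1}}{h}E_{k_{1}-h}$ and $\sum_{n}\binom{m_{3}}{n}E_{k_{3}-n}$ (resp.\ $F_{k_{3}-n}$) with Entringer numbers via Theorem \ref{th:BE=}. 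Your intuition that the binomials come from interleaving blocks of ones in three gaps, and that Theorem \ref{th:BE=} is how the Entringer numbers enter, is correct; but the engine that makes the identity true is the duality formula and the real/imaginary-part extraction, not a direct shuffle expansion of the product on the right-hand side.
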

\begin{theorem} \label{th:TorTt=E} We have
\begin{align*} 
&\sum_{m_{3}=1}^{k_{3}} \mathbb{E}(k_{3},m_{3})  \sum_{m_{2} = m_{3}}^{k_{2}}  \sum_{m_{1} = m_{2}}^{k_{3}}\tilde{T}(\underbrace{1,\dots,1,1\hspace{-0mm}\overset{\tiny m_{3}}{\check{+1}}\hspace{-0mm},1,\dots,1,1\hspace{-0mm}\overset{\tiny m_{2}}{\check{+1}}\hspace{-0mm},1,\dots,1,1\hspace{-0mm}\overset{\tiny m_{1}}{\check{+1}}\hspace{-0mm},1,\dots,1}_{k_{3}})\\
&=\begin{cases}
\tilde{T}(1,1) \tilde{T}(k_{3}+1) - \tilde{T}(1,1,k_{3}+1) & (k_{3}: \textup{odd}),\\
\tilde{T}(1,1) {T}(k_{3}+1) + T(1,1,k_{3}+1)& (k_{3}: \textup{even}).
\end{cases}
\end{align*}
\end{theorem}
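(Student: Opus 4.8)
The right-hand side has the shape ``(product of two independent values) $\pm$ (their order-preserving merge)'', which is the fingerprint of a product (harmonic/quasi-shuffle) relation. The plan is to reduce the theorem to such a relation by first using \eqref{eq:KTrel} to expose the Entringer weights on \emph{both} sides. The key observation is that in either parity the summand of \eqref{eq:KTrel} is a $\tilde{T}$-value, so reading that relation from right to left gives the single statement
\begin{align*}
V(k_{3}+1)=\sum_{m_{3}=1}^{k_{3}}\mathbb{E}(k_{3},m_{3})\,\tilde{T}(\underbrace{1,\dots,1,\overset{\tiny m_{3}}{\check{2}},1,\dots,1}_{k_{3}}),
\end{align*}
where $V=\tilde{T}$ when $k_{3}$ is odd and $V=T$ when $k_{3}$ is even. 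Substituting this into the first summand of the right-hand side produces the factor $\mathbb{E}(k_{3},m_{3})$ in front of $\tilde{T}(1,1)\cdot\tilde{T}(\cdots\overset{\tiny m_{3}}{\check{2}}\cdots)$, which is exactly the Entringer factor already present on the left. Note that this reduction has a pleasant structural bonus: after the substitution the product that must be expanded is between two \emph{genuine $\tilde{T}$-values}, so the awkward product of a $T$- with a $\tilde{T}$-value (suggested by the even case) never has to be formed directly.

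First I would therefore reduce the theorem to the assertion
\begin{align*}
\sum_{m_{3}=1}^{k_{3}}\mathbb{E}(k_{3},m_{3})\Big(\tilde{T}(1,1)\cdot\tilde{T}(\cdots\overset{\tiny m_{3}}{\check{2}}\cdots)-\!\!\sum_{m_{2}=m_{3}}^{k_{3}}\sum_{m_{1}=m_{2}}^{k_{3}}\!\tilde{T}(\cdots\overset{\tiny m_{3}}{\check{2}}\,\overset{\tiny m_{2}}{\check{+1}}\,\overset{\tiny m_{1}}{\check{+1}}\cdots)\Big)=\pm V(1,1,k_{3}+1),
\end{align*}
in which the only parity-dependent information now sits in the single correction term on the right. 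Next I would expand $\tilde{T}(1,1)\cdot\tilde{T}(\cdots\overset{\tiny m_{3}}{\check{2}}\cdots)$ by the harmonic product for these values and split it into a diagonal part (where both summation variables of $\tilde{T}(1,1)$ collide with variables of the length-$k_{3}$ chain, raising two of its entries by $1$) and a non-diagonal part (where at least one variable is inserted as a fresh component). The diagonal part, indexed by the two collision positions $m_{2}\le m_{1}$, is designed to reproduce precisely the double sum $\sum_{m_{2}\ge m_{3}}\sum_{m_{1}\ge m_{2}}\tilde{T}(\cdots)$ on the left; it then remains to show that the non-diagonal part, summed against $\mathbb{E}(k_{3},m_{3})$, collapses to the correction term $V(1,1,k_{3}+1)$. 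For this last collapse I would apply \eqref{eq:KTrel} a second time, now resolving only the final block while carrying the fixed $(1,1)$ prefix --- this is exactly the prefixed resolution underlying Theorem~\ref{th:Tt+T=EG} --- and it is the parity of the top summation index that selects $V=\tilde{T}$ or $V=T$ and fixes the overall sign.

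The hard part will be the sign bookkeeping. The weight $(-1)^{(m_{r}-r)/2}$ in the definition of $\tilde{T}$ depends only on the \emph{largest} summation index and the depth, not on the individual components, so the harmonic product of two $\tilde{T}$-values does \emph{not} close within $\tilde{T}$-values: upon merging two signed, parity-constrained chains, consistency of the combined top-index sign forces genuine $T$-values to appear. This non-closure is not a nuisance to be avoided but the very source of the odd/even dichotomy and of the sign in front of $V(1,1,k_{3}+1)$, so the product must be carried out term by term with the parity of the merged top variable tracked throughout. A second delicate point is verifying that the quasi-shuffle ordering yields exactly the range $m_{3}\le m_{2}\le m_{1}$ for the diagonal collisions (rather than unconstrained positions), together with the correct treatment of the coincidences $m_{2}=m_{3}$ and $m_{1}=m_{2}=m_{3}$, which produce the repeated $+1$'s on a single entry. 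I expect to control both the range constraint and the reassembly of the non-diagonal terms by inducting on $k_{3}$ through the boustrophedon recursion for $\mathbb{E}(k_{3},m_{3})$, in the same manner that drives \eqref{eq:KTrel} itself.
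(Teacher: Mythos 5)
Your reduction via \eqref{eq:KTrel} is legitimate, but the engine you then rely on --- the harmonic (quasi-shuffle) product of $\tilde{T}(1,1)$ with $\tilde{T}(\cdots\check{2}\cdots)$ --- is not available for these series, and this is a genuine gap rather than mere ``sign bookkeeping.'' Because of the congruence conditions $m_i\equiv i \pmod 2$ in the definitions of $T$ and $\tilde T$, interleaving the two summation variables $a<b$ of $\tilde T(1,1)$ (with $a$ odd, $b$ even) into a chain $0<n_1<\cdots<n_{k_3}$ with $n_i\equiv i$ produces merged chains whose parity pattern is generically neither the $T$- nor the $\tilde T$-pattern: a variable $a$ inserted at position $j+1$ has no reason to satisfy $a\equiv j+1$. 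So the stuffle expansion does not close on the space spanned by $T$- and $\tilde T$-values at all; the problem is not merely that $T$'s appear next to $\tilde T$'s. Moreover, even at the formal level your ``diagonal part'' cannot match the left-hand side: the two colliding variables satisfy $a<b$ strictly, so a double collision raises two \emph{distinct} entries, whereas the theorem's sum includes the coincident terms $m_1=m_2$ and $m_1=m_2=m_3$ (an entry equal to $3$ or $4$); and nothing in the stuffle forces the collision positions into the range $\ge m_3$. These are not delicate points to be controlled later by induction; they are signs that the decomposition itself is the wrong one.

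The paper's proof never forms a harmonic product. It specializes the general identities of Theorems \ref{th: =E} and \ref{th: =F} (which come from the conjugation formula $T(\mathbf{k})=\sum_h A(\mathbf{k}^{(h)};i)\,\overline{A((\mathbf{k}^{\dag})^{(|\mathbf{k}|-h)};i)}$ and a generating-function manipulation) to $\mathbf{k}=(1,1,k_3+1)$, takes imaginary or real parts according to the parity of $k_3$, and expands the resulting quantities $\mathcal{T}(E_n;\cdot;\cdot)$ and $\mathcal{T}(F_n;\cdot;\cdot)$ by the \emph{shuffle} product of iterated integrals (Lemma \ref{lem;calT=CT}), which does close on $\tilde T$-values. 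The triple sum over $m_3\le m_2\le m_1$ arises from duality applied to $\tilde T(\{1\}^{k_3-1-n},4)$, and the Entringer weights come from Theorem \ref{th:BE=}, which identifies $\sum_n\binom{m_3}{n}E_{k_3-n}$ (resp.\ $F_{k_3-n}$) with $\pm\mathbb{E}(k_3,m_3)$; the relation \eqref{eq:KTrel} is not an input but a byproduct of the same machinery. To salvage your plan you would have to replace the harmonic product by the shuffle product and work with dual indices throughout; as written, the central step would fail.
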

\begin{theorem} \label{th:TorTt2=EE}For even $k_{1} \ge 0$, we have
\begin{align*}
& \tilde{T}(1) \sum_{\substack{l_{1}+l_{2}+l_{3}= k_{1}}}  \sum_{\substack{m_{1}+m_{2}+m_{3} = k_{3} \\ m_{3} > 0}} \binom{l_{1}+m_{1}}{l_{1}} \binom{1+l_{2}+m_{2}}{1,l_{2},m_{2}} \binom{l_{3}+m_{3}}{l_{3}}  \\
&\quad \times \mathbb{E}(k_{1},k_{1}-l_{1})\, \mathbb{E}(k_{3},m_{3})\,\tilde{T}(\underbrace{1,\dots,1,\hspace{-3mm}\overset{\tiny l_{3}+m_{3}}{\check{2}}\hspace{-3mm},1,\dots,1,\hspace{-10mm}\overset{\tiny l_{3}+m_{3}+l_{2}+m_{2}+1}{\check{2}}\hspace{-10mm},1,\dots,1}_{k_{1}+k_{3}+1})\\
&=\begin{cases}
\tilde{T}(k_{1}+1,2) \tilde{T}(k_{3}+1) - \tilde{T}(k_{1}+1,2,k_{3}+1) & (k_{3}: \textup{odd}),\\
\tilde{T}(k_{1}+1,2) {T}(k_{3}+1) + T(k_{1}+1,2,k_{3}+1) & (k_{3}: \textup{even}).
\end{cases}
\end{align*}
\end{theorem}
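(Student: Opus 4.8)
The plan is to prove this as the even-$k_{1}$ companion of Theorem \ref{th:TorTt1=EE}, using the same machinery: iterated-integral representations of the multiple $T$- and $\tilde{T}$-values together with the shuffle product, with the prototype \eqref{eq:KTrel} serving as the basic ``collapse'' identity. First I would fix iterated integral representations on the simplex $0<t_{1}<\cdots<t_{w}<1$, writing each $\tilde{T}(\mathbf{k})$ (resp. $T(\mathbf{k})$) as an integral of a word in the form $\omega_{0}=dt/t$ and the ``level'' forms that encode the congruence $m_{i}\equiv i\bmod 2$ and the sign $(-1)^{(m_{r}-r)/2}$. In this language a raised entry ``$2$'' is a single extra $\omega_{0}$ inserted at the marked position, $\tilde{T}(1)=\pi/2$ is the integral of one arctangent-type form, and a product of two values unfolds by the shuffle formula $I(u)\,I(v)=\sum_{w\in u\shuffle v}I(w)$.

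Next I would analyse the right-hand side. Writing $u$ for the word representing $(k_{1}+1,2)$ and $v$ for the word representing $(k_{3}+1)$, the concatenation $uv$ occurs exactly once in $u\shuffle v$ and represents the index $(k_{1}+1,2,k_{3}+1)$; the sign convention makes this last factor a $\tilde{T}$-value entered with a minus sign when $k_{3}$ is odd and a $T$-value entered with a plus sign when $k_{3}$ is even, which already produces the two displayed cases. Removing this term from $\tilde{T}(k_{1}+1,2)\,\tilde{T}(k_{3}+1)$ leaves exactly the genuinely interleaved shuffles, that is, those in which the letters of $v$ are not all pushed to the tail. These interleaved words are multiple $\tilde{T}$-values carrying two raised positions inside a string of $1$'s, and the factor $\tilde{T}(1)$ on the left side accounts for the extra weight-one form that is integrated separately, in accordance with the weight count $1+(k_{1}+k_{3}+3)=k_{1}+k_{3}+4$.

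The core of the argument is then the bookkeeping of the interleavings, organized by the two raised positions $l_{3}+m_{3}$ and $l_{3}+m_{3}+l_{2}+m_{2}+1$. These split the runs of $\omega_{0}$'s into three blocks, and the free interleaving of the two words within each block yields the three factors $\binom{l_{1}+m_{1}}{l_{1}}$, $\binom{1+l_{2}+m_{2}}{1,l_{2},m_{2}}$ and $\binom{l_{3}+m_{3}}{l_{3}}$; since $\binom{1+l_{2}+m_{2}}{1,l_{2},m_{2}}=(1+l_{2}+m_{2})\binom{l_{2}+m_{2}}{l_{2}}$, the extra factor $(1+l_{2}+m_{2})$ and the increase of the total length to $k_{1}+k_{3}+1$ both record the placement of a single surplus $\omega_{0}$ that the ``$2$'' of $(k_{1}+1,2)$ contributes to the middle block when $k_{1}$ is even. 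Finally, the parity and sign conditions that decide which interleavings reassemble into admissible $\tilde{T}$-words at the two ends are counted by the Entringer numbers: the even parity of $k_{1}$ is precisely what converts the $\mathbb{E}(k_{1},l_{1})$ of Theorem \ref{th:TorTt1=EE} into $\mathbb{E}(k_{1},k_{1}-l_{1})$, while the tail contributes $\mathbb{E}(k_{3},m_{3})$ as in \eqref{eq:KTrel}, both via the explicit down-up counting formulas established earlier.

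The step I expect to be the main obstacle is exactly this last identification: proving that the number of sign-admissible interleavings at each end equals the relevant Entringer number, and showing that the even parity of $k_{1}$ simultaneously flips the starting value to $k_{1}-l_{1}$ and forces the surplus middle letter recorded by $\binom{1+l_{2}+m_{2}}{1,l_{2},m_{2}}$. This is the delicate combinatorial heart that separates the even case from the odd case of Theorem \ref{th:TorTt1=EE}, and it is where the explicit formulas for the total numbers of the relevant permutations do the real work; the analytic shuffle step by contrast is routine once the representations are fixed.
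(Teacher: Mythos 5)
Your proposal has a structural gap that I do not think can be repaired within the framework you describe. You propose to expand the product $\tilde{T}(k_{1}+1,2)\,\tilde{T}(k_{3}+1)$ by the shuffle product of the two corresponding integral words and to identify the non-concatenation shuffles with the left-hand side. But every word in $u\shuffle v$ contains exactly three of the forms $\frac{2dt}{1-t^{2}}$, so every term of that expansion is a depth-three value with large exponent entries; moreover, since both factors are $A(\cdot;i)$-type integrals, the expansion consists of $\tilde{T}$-values with uniformly positive signs. It can never produce the indices $(\{1\}^{a},2,\{1\}^{b},2,\{1\}^{c})$ of length $k_{1}+k_{3}+1$ appearing on the left-hand side, nor the sign and the $T$-versus-$\tilde{T}$ dichotomy on the right. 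The missing ingredient is the duality-type identity $T(\mathbf{k})=\sum_{h}A(\mathbf{k}^{(h)};i)\overline{A((\mathbf{k}^{\dag})^{(|\mathbf{k}|-h)};i)}$ from \cite{U3}, which is what converts the index $(k_{1}+1,2,k_{3}+1)$ into its dual (a string of $1$'s with two raised entries) and, via the complex conjugate factor, is responsible for the interplay of $T$ and $\tilde{T}$ and for the case split upon taking real or imaginary parts. The paper then multiplies the resulting formal power series by the generating functions of $E_{n}$ and $F_{n}$ to obtain Theorems \ref{th: =E} and \ref{th: =F}, specializes to $\mathbf{k}=(k_{1}+1,2,k_{3}+1)$, and applies Lemmas \ref{lem;calT=CT} and \ref{lem; TtcalT} (which is where the shuffle product actually enters, acting on the all-ones tails).

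Your account of where the Entringer numbers come from is also not what happens: they do not count sign-admissible interleavings. They arise because the shuffle bookkeeping leaves binomial sums $\sum_{n=1}^{m_{3}}\binom{m_{3}}{n}E_{k_{3}-n}$ (resp.\ $F_{k_{3}-n}$) and $\sum_{h=0}^{l_{1}}\binom{l_{1}}{h}E_{k_{1}-h}$, which are identified with Entringer numbers by the purely combinatorial Theorems \ref{th:BE=} and \ref{th:Ekk-j} proved in Section \ref{se:DD}. In particular, the appearance of $\mathbb{E}(k_{1},k_{1}-l_{1})$ for even $k_{1}$ is exactly the content of Theorem \ref{th:Ekk-j} applied to a sum that starts at $h=0$; it is not produced by a parity-induced flip of interleavings. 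Likewise the multinomial $\binom{1+l_{2}+m_{2}}{1,l_{2},m_{2}}$ comes from the product of binomials in Lemma \ref{lem; TtcalT} for the middle exponent $k_{2}=2$, not from placing a surplus letter. To complete a proof along the paper's lines you would need the duality theorem, the generating-function step introducing the Euler numbers, the real/imaginary part extraction (which uses \eqref{eq:Ttk_1} and \eqref{eq:Tk_1} to evaluate the boundary terms as $\tilde{T}(k_{1}+1,2)\tilde{T}(k_{3}+1)$ or $\tilde{T}(k_{1}+1,2)T(k_{3}+1)$), and finally the combinatorial identifications; none of these are supplied by the shuffle product alone.
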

The original proof of \eqref{eq:KTrel} by Kaneko and Tsumura relies on induction and seems difficult to generalize. Therefore, we generalize \eqref{eq:KTrel} using generating functions. This method can be regarded as a generalisation of the method used in the author's previous works \cite{U1} and \cite{U3}. This method yields relations for general indices, but simple relations can only be obtained for some indices. Thus, we derive general relations and prove the main theorems by specializing them. In the process, sums of the form $\sum_{n=1}^{j} \binom{j}{n} a_{k-n}$ appear. We find that when $a_{n}$ is replaced by a certain sequence, $\sum_{n=1}^{j} \binom{j}{n} a_{k-n}$ is equal to the total number of down-up permutations or Dumont permutations starting with a fixed value. Therefore, the total numbers of these permutations appear in the main theorems.

In Section \ref{se:DD}, we will see that the total numbers of down-up permutations and Dumont permutations starting with a fixed value are evaluated by the sum of the form $\sum_{n=1}^{j} \binom{j}{n} a_{k-n}$. In Section \ref{se:relTt}, we will generalize \eqref{eq:KTrel} and give the proofs of main theorems. Appendix \ref{se:Appendix} contains the tables of numbers dealt with in Section \ref{se:DD}, including $\mathbb{E}(n,j)$ and $\mathbb{G}(n,j)$.

\section{Down-up permutations and Dumont permutations} \label{se:DD}
In the present section, we evaluate down-up permutations and Dumont permutations starting with a fixed value by the sum of the form $\sum_{n=1}^{j} \binom{j}{n} a_{k-n}$.
\subsection{Preliminaries}
This paper deals with the following sequences:
\begin{align*}
{B}_{n}:& \textup{ the Bernoulli numbers with $B_{1} = -1/2$},\\
{C}_{n}:&=(2-2^{n})B_{n},\\
{D}_{n}:&= 2^{n}B_{n},\\
{E}_{n}:& \textup{ the Euler numbers},\\
{F}_{n}:&= (2^{n+1}-4^{n+1})B_{n+1}/(n+1),\\
{G}_{n}:&=2(1-2^{n})B_{n} = C_{n}-D_{n}\ \textup{(the Genocchi numbers with $G_{1}=1$)}, \\
\mathbb{E}_{n} :&= \begin{cases} i^{n} E_{n} & (n:\textup{even}),\\
i^{n+1} F_{n}& (n:\textup{odd}),\end{cases}\\
\mathbb{G}_{n} :&= (-1)^{\lfloor n/2 \rfloor} G_{n}= \begin{cases}i^{n} G_{n}& (n:\textup{even}),\\
0& (n\ge3:\textup{odd}),\\
1& (n=1).\end{cases}
\end{align*}
Note that ${B}_{n} = D_{n} = G_{n} = \mathbb{G}_{n} = 0$ for odd $k \ge 3$, ${C}_{n} = E_{n} = 0$ for odd $k \ge 1$, ${F}_{n} = 0$ for even $k \ge 2$. Also note that $\mathbb{E}_{n} \ge 0$ and $\mathbb{G}_{n} \ge 0$ for any $n \ge 0$. These facts are used throughout this paper. It is well-known that the number $\mathbb{E}_{n}$ is equal to the total number of down-up (alternating) permutations in $\mathfrak{S}_n$. Additionally, Dumont \cite{D} has proven that the number $\mathbb{G}_{2n}$ is equal to the total number of Dumont permutations in $\mathfrak{S}_{2n-1}$.

The generating functions of each sequence are as follows:
\begin{align*}
\sum_{n=0}^{\infty} B_{n}\frac{X^{n}}{n!} &= \frac{X}{e^{X}-1},&
\sum_{n=0}^{\infty} {C}_{n}\frac{X^{n}}{n!}  &= \frac{2X}{e^{X}-e^{-X}},\\
\sum_{n=0}^{\infty} {D}_{n}\frac{X^{n}}{n!} &= \frac{2Xe^{-X}}{e^X{-e^{-X}}},&
\sum_{n=0}^{\infty} E_{n}\frac{X^{n}}{n!} &= \frac{2}{e^{X}+e^{-X}},\\
\sum_{n=0}^{\infty} F_{n}\frac{X^{n} }{n!} &= \frac{2e^{-X}}{e^{X}+e^{-X}},&
\sum_{n=0}^{\infty} G_{n}\frac{X^{n}}{n!}  &= \frac{2X}{e^{X}+1},\\
\sum_{n=0}^{\infty} \mathbb{E}_{n}\frac{X^{n}}{n!} &= \sec{X}+\tan{X},&
\sum_{n=0}^{\infty} \mathbb{G}_{n}\frac{X^{n}}{n!} &= {X}+X\tan{\frac{X}{2}}.
\end{align*}
The first ten values of each sequence are as follows:
{
\begin{table}[H]
\centering
  \begin{tabular}{|c|c|c|c|c|c|c|c|c|c|c|c|} \hline
	$n$& 0 & 1 & 2 & 3 & 4 &5 &6 &7&8&9&10\\ \hline
	${B}_{n}$ &1 &-1/2& 1/6& 0& -1/30& 0& 1/42& 0& -1/30& 0& 5/66\\ \hline 
	${C}_{n}$ &1& 0& -1/3& 0& 7/15& 0& -31/21& 0& 127/15& 0& -2555/33\\ \hline 
	${D}_{n}$ &1& -1& 2/3& 0& -8/15& 0& 32/21& 0& -128/15& 0& 2560/33\\ \hline 
	${E}_{n}$ &1& 0 & -1 & 0 & 5 & 0 & -61 & 0 & 1385 & 0 & -50521\\ \hline 
	${F}_{n}$ &1 & -1 & 0& 2& 0& -16& 0& 272& 0& -7936& 0\\ \hline 
	${G}_{n}$ &0& 1& -1& 0& 1& 0& -3& 0& 17& 0& -155\\ \hline 
	$\mathbb{E}_{n}$ &1&1& 1 & 2 & 5 & 16 & 61 & 272 & 1385 &7936&50521\\ \hline 
	$\mathbb{G}_{n}$&0&1 & 1 & 0 & 1 & 0 & 3 & 0 & 17 & 0 &155\\ \hline
  \end{tabular}
\end{table}
}
\subsection{Down-up permutations and Dumont permutations} 
We define $\mathbb{E}(k,j)$ as the Entringer number, that is the total number of down-up permutations in $\mathfrak{S}_{k+1}$ starting with $j+1$, and $\mathbb{G}(k,j)$ as the total number of Dumont permutations in $\mathfrak{S}_{k-1}$ starting with $j$. The numbers $\mathbb{E}(k,j)$ satisfy the following recurrence relation (see \cite{E}, where note that $A(n,k) = \mathbb{E}(n-1,n-k)$):
\begin{align*}
&\mathbb{E}(0,0) = 1, \quad \mathbb{E}(k,0) = 0 \quad (k>0),\\
&\mathbb{E}(k,j) = \mathbb{E} (k, j-1) + \mathbb{E}(k-1,k-j)\quad (1\le j \le k).
\end{align*}
 The numbers $\mathbb{G}(k,j)$ satisfy the following recurrence relation (see \cite{K}):
\begin{align*}
&\mathbb{G}(k,0) = 0 \quad (k \ge 0),\\
&\mathbb{G}(1,1)= \mathbb{G}(2,2) = 0,\quad \mathbb{G}(2,1) = 1,\\
&\mathbb{G}(k,j) = \mathbb{G}(k, j-1) - \sum_{m=0}^{j-2}\mathbb{G}(k-2,m)\quad \textup{($k\ge4$:even, $j\ge1$:odd)},\\
&\mathbb{G}(k,j) = \mathbb{G}(k, j-1) + \sum_{m=j-1}^{k-2}\mathbb{G}(k-2,m)\quad \textup{($k\ge4$:even, $j\ge2$:even)},\\
&\mathbb{G}(k,j) = \mathbb{G}(k+1, j) + \mathbb{G}(k+1,j+1)\quad \textup{($k \ge 3$:odd)}.
\end{align*}
See Appendix \ref{se:Appendix} for the first few rows of $\mathbb{E}(k,j)$ and $\mathbb{G}(k,j)$. 

We particularly focus on the sum $\sum_{n=1}^{j} \binom{j}{n} a_{k-n}$ when $a_{n} = E_{n}$, $F_{n}$ and $G_{n}$, thus we define as follows.
\begin{definition}
For $0 \le j \le k$, we define
\begin{align}
E(k,j) &= (-1)^{\lfloor (k-1)/2 \rfloor}\sum_{n=1}^{j} \binom{j}{n} E_{k-n},\\
F(k,j) &= (-1)^{\lfloor k/2 \rfloor}\sum_{n=1}^{j} \binom{j}{n} F_{k-n},\\
G(k,j) &= (-1)^{\lfloor (k-1)/2 \rfloor}\sum_{n=1}^{j} \binom{j}{n} G_{k-n}.
\end{align}
\end{definition}
These numbers have interesting properties: they are non-negative integers, satisfy certain recurrence relations and have combinatorial interpretation depending on the parity of $k$, which will be proven later in Theorem \ref{th:rrEFGH}, Theorem \ref{th:BE=} and Theorem \ref{th:BG=}.

In addition, to help our discussion we also define the following numbers.
\begin{definition} For $0 \le j \le k$, we define
\begin{align*}
H(k,j) = (-1)^{\lfloor (k-1)/2 \rfloor}\sum_{n=1}^{j} h(j,n)\, G_{k-n},
\end{align*}
where
\begin{align*}
h(j,n) = \sum_{u=n}^{j}(-1)^{j-u}\binom{u-1}{n-1}.
\end{align*}
\end{definition}

See Appendix \ref{se:Appendix} for the first few rows of ${E}(k,j)$, ${F}(k,j)$, ${G}(k,j)$ and ${H}(k,j)$. 

We first provide recurrence relations.
\begin{theorem}\label{th:rrEFGH} For $1\le j \le k$, we have
\begin{align}\label{eq:rrE}
&E(k,j) = E(k,j-1) + (-1)^{k}E(k-1,j-1) +  (-1)^{\lfloor (k-1)/2 \rfloor}E_{k-1}\\ \label{eq:rrF}
&F(k,j) = F(k,j-1) - (-1)^{k}F(k-1,j-1) + (-1)^{\lfloor k/2 \rfloor}{F}_{k-1}\\ \label{eq:rrG}
&G(k,j) = G(k,j-1) + (-1)^{k} G(k-1,j-1) + \mathbb{G}_{k-1},\\ \label{eq:rrH}
&H(k,j) = H(k,j-1) + (-1)^{k} H(k-1,j-1) - (-1)^{j}\mathbb{G}_{k-1}.\\ \label{eq:rrHG}
&H(k,j) = - H(k,j-1) + (-1)^{k} G(k-1,j-1) + \mathbb{G}_{k-1}.
\end{align}
\end{theorem}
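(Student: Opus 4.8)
The plan is to derive all five identities from one mechanism: Pascal's rule for the binomial coefficients (and its analogue for $h(j,n)$), combined with careful bookkeeping of the sign prefactors. Write $\epsilon_E(k)=\epsilon_G(k)=\epsilon_H(k)=(-1)^{\lfloor(k-1)/2\rfloor}$ and $\epsilon_F(k)=(-1)^{\lfloor k/2\rfloor}$ for the prefactors appearing in the definitions. A one-line parity check on the floor functions gives the key ratios $\epsilon_E(k)=\epsilon_G(k)=\epsilon_H(k)=(-1)^k\,\epsilon_E(k-1)$ and $\epsilon_F(k)=(-1)^{k+1}\epsilon_F(k-1)$; these are exactly the signs $\pm(-1)^k$ that sit in front of the $(k-1)$-terms on the right-hand sides, so the prefactor is the sole source of those signs.

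First I would handle \eqref{eq:rrE}, \eqref{eq:rrF}, \eqref{eq:rrG} uniformly. Applying $\binom{j}{n}=\binom{j-1}{n}+\binom{j-1}{n-1}$ and splitting the sum, the $\binom{j-1}{n}$-part (which vanishes at $n=j$) reproduces, after multiplication by the unchanged prefactor $\epsilon(k)$, the $(k,j-1)$-term. Reindexing the $\binom{j-1}{n-1}$-part by $m=n-1$ gives $\sum_{m=0}^{j-1}\binom{j-1}{m}a_{(k-1)-m}$; peeling off $m=0$ leaves the $(k-1,j-1)$-sum, and the prefactor ratio above converts it into $(-1)^k$ times the $(k-1,j-1)$-term (with an extra sign for $F$). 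The leftover $m=0$ boundary term is $\epsilon(k)\,a_{k-1}$, namely $(-1)^{\lfloor(k-1)/2\rfloor}E_{k-1}$, $(-1)^{\lfloor k/2\rfloor}F_{k-1}$, and — using $\mathbb{G}_{k-1}=(-1)^{\lfloor(k-1)/2\rfloor}G_{k-1}$ — precisely $\mathbb{G}_{k-1}$ for $G$. This produces the three identities simultaneously.

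For the $H$-identities I need two recurrences for $h(j,n)$. Peeling off the top term $u=j$ from $h(j,n)=\sum_{u=n}^{j}(-1)^{j-u}\binom{u-1}{n-1}$ gives cleanly $h(j,n)=\binom{j-1}{n-1}-h(j-1,n)$; feeding this into $H(k,j)$ and repeating the reindexing above yields \eqref{eq:rrHG}, the $\binom{j-1}{n-1}$-part contributing $(-1)^kG(k-1,j-1)+\mathbb{G}_{k-1}$ and the $-h(j-1,n)$-part contributing $-H(k,j-1)$. For \eqref{eq:rrH} I instead apply Pascal inside the definition, $\binom{u-1}{n-1}=\binom{u-2}{n-1}+\binom{u-2}{n-2}$, and reindex by $w=u-1$; for $n\ge2$ this gives $h(j,n)=h(j-1,n)+h(j-1,n-1)$. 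The reindexed piece $h(j-1,n-1)$ assembles into $(-1)^kH(k-1,j-1)$, the piece $h(j-1,n)$ assembles into $H(k,j-1)$, and the only discrepancy lives at $n=1$, where the geometric sum $h(j,1)=\sum_{i=0}^{j-1}(-1)^i=(1-(-1)^j)/2$ yields $h(j,1)-h(j-1,1)=-(-1)^j$; multiplying the associated $G_{k-1}$ by $\epsilon_H(k)$ turns this into the correction $-(-1)^j\mathbb{G}_{k-1}$ and completes \eqref{eq:rrH}.

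The main obstacle is exactly this $n=1$ boundary. The recurrence $h(j,n)=h(j-1,n)+h(j-1,n-1)$ is valid only for $n\ge2$: extending it to $n=1$ would require a value ``$h(j-1,0)$'', and the underlying Pascal step would introduce binomial coefficients with negative lower index $\binom{u-2}{-1}$ or negative upper index $\binom{-1}{0}$, whose conventional values are what break the naive recurrence (e.g.\ it would force the false equality $h(j,1)=h(j-1,1)$). Treating $n=1$ by the explicit evaluation $h(j,1)=(1-(-1)^j)/2$ is therefore essential, and it is this boundary term that supplies the parity factor $(-1)^j$ distinguishing \eqref{eq:rrH} from \eqref{eq:rrHG}. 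The remaining verifications — the prefactor parity computations and the routine reindexing of the sums — are then mechanical.
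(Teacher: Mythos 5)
Your proposal is correct and follows essentially the same route as the paper: Pascal's rule with the prefactor parity check for \eqref{eq:rrE}--\eqref{eq:rrG}, the recurrence $h(j,n)=h(j-1,n)+h(j-1,n-1)$ (valid only for $n\ge 2$) plus the explicit $n=1$ boundary evaluation $h(j,1)-h(j-1,1)=-(-1)^{j}$ for \eqref{eq:rrH}, and the identity $h(j,n)+h(j-1,n)=\binom{j-1}{n-1}$ for \eqref{eq:rrHG}. Your explicit flagging of why the $h$-recurrence fails at $n=1$ is a helpful clarification of a point the paper leaves implicit, but the argument is the same.
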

\begin{proof}
For $1\le j \le k$, we have
\begin{align*}
(-1)^{k}E(k-1,j-1) &= (-1)^{k}(-1)^{\lfloor (k-2)/2 \rfloor}\sum_{n=1}^{j-1} \binom{j-1}{n} E_{k-1-n}\\
&= (-1)^{\lfloor (k-1)/2 \rfloor}\left(-E_{k-1}+ \sum_{n=1}^{j} \binom{j-1}{n-1} E_{k-n}\right).
\end{align*}
Therefore, we can obtain \eqref{eq:rrE} by using the relation $\binom{j}{n} = \binom{j-1}{n}+\binom{j-1}{n-1}$. In the same way, we can verify \eqref{eq:rrF} and \eqref{eq:rrG}. The proof of \eqref{eq:rrH} uses the fact that $h(j,n)$ satisfies the same recurrence relation as binomial coefficients, namely
\begin{align*}
h(j,n) = h(j-1,n) + h(j-1,n-1) \qquad \text{for $n \ge 2$}.
\end{align*}
This recurrence relation is derived as follows: Noting that $\binom{n-2}{n-1} = 0$, we obtain
\begin{align*}
h(j-1,n) = \sum_{u=n}^{j}(-1)^{j-u}\binom{u-2}{n-1},
\end{align*}
and applying the recurrence relation $\binom{u-1}{n-1} = \binom{u-2}{n-1} + \binom{u-2}{n-2}$ to this equation, we obtain the result.

Applying the recurrence relation of $h(j,n)$ to
\begin{align*}
(-1)^{k}H(k-1,j-1) = (-1)^{\lfloor (k-1)/2 \rfloor}\sum_{n=2}^{j} h(j-1,n-1)\, G_{k-n},
\end{align*}
we have
\begin{align*}
&(-1)^{k}H(k-1,j-1)\\
&= -(-1)^{\lfloor (k-1)/2 \rfloor}(h(j,1) -h(j-1,1))\, G_{k-1} + H(k,j) - H(k,j-1).
\end{align*}
Using
\begin{align*}
(-1)^{\lfloor (k-1)/2 \rfloor}G_{k-1} = \mathbb{G}_{k-1}
\end{align*}
and
\begin{align*}
-(h(j,1) -h(j-1,1) )= -\sum_{u=1}^{j}(-1)^{j-u} + \sum_{u=1}^{j-1}(-1)^{j-1-u} = (-1)^{j},
\end{align*}
we obtain \eqref{eq:rrH}. The proof of \eqref{eq:rrHG} relies on the use of
\begin{align*}
h(j,n) + h(j-1,n) = \binom{j-1}{n-1},
\end{align*}
which is readily derived from the definition. Applying this equation to 
\begin{align*}
(-1)^{k} G(k-1,j-1) 
= - \mathbb{G}_{k-1} + (-1)^{\lfloor (k-1)/2 \rfloor}\sum_{n=1}^{j} \binom{j-1}{n-1} G_{k-n},
\end{align*}
we can obtain \eqref{eq:rrHG}.
\end{proof}
The Entringer number $\mathbb{E}(k,j)$ is equal to $E(k,j)$ or $F(k,j)$ depending on the parity of $k$.
\begin{theorem}\label{th:BE=}
For $k \ge 1$, We have
\begin{align*}
\mathbb{E}(k,j) = \begin{cases}
\displaystyle E(k,j) & (k : \textup{odd}),\\
\displaystyle F(k,j) & (k : \textup{\rm even}).
\end{cases}
\end{align*}
\end{theorem}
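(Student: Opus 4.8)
The plan is to prove both cases at once by induction on $k$, by verifying that the sequences $E(k,\cdot)$ (for odd $k$) and $F(k,\cdot)$ (for even $k$) satisfy the recurrence and boundary conditions that characterise the Entringer numbers, namely $\mathbb{E}(k,0)=0$ and $\mathbb{E}(k,j)=\mathbb{E}(k,j-1)+\mathbb{E}(k-1,k-j)$ for $1\le j\le k$. The boundary value $E(k,0)=F(k,0)=0$ is immediate from the definitions (empty sums), and the base case $k=1$ is a one-line check, since $E(1,1)=E_0=1=\mathbb{E}(1,1)$. Thus the entire statement reduces to verifying the Entringer recurrence in the inductive step.

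The main obstacle is a parity/index mismatch. The Entringer recurrence relates $\mathbb{E}(k,j)$ to $\mathbb{E}(k-1,k-j)$, whose first argument $k-1$ has the opposite parity and whose second argument is the \emph{reflected} index $k-j$; by the induction hypothesis this term equals $F(k-1,k-j)$ when $k$ is odd and $E(k-1,k-j)$ when $k$ is even. On the other hand, the intrinsic recurrences \eqref{eq:rrE} and \eqref{eq:rrF} of Theorem \ref{th:rrEFGH} express $E(k,j)-E(k,j-1)$ and $F(k,j)-F(k,j-1)$ through the \emph{same} function at the unreflected index $j-1$. Comparing the two descriptions reduces the whole theorem to a single reflection identity linking $E$ and $F$ at complementary indices: for odd $k$ one needs $F(k-1,k-j)=-E(k-1,j-1)+(-1)^{(k-1)/2}E_{k-1}$, and symmetrically $E(k-1,k-j)=-F(k-1,j-1)+(-1)^{k/2}F_{k-1}$ for even $k$.

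Writing $m=k-1$, $i=j-1$ and stripping the sign prefactors in the definitions, this reflection identity is equivalent to the clean statement
\[
\sum_{n=0}^{i}\binom{i}{n}F_{m-n}=(-1)^{m}\sum_{n=0}^{m-i}\binom{m-i}{n}E_{m-n}\qquad(0\le i\le m),
\]
where the boundary terms have been absorbed using $E_m=0$ for odd $m$ and $F_m=0$ for even $m\ge 2$. To prove this I would use the relation between the two exponential generating functions recorded in the preliminaries, $\sum_n F_nX^n/n!=e^{-X}\sum_n E_nX^n/n!$, which gives $F_p=\sum_a\binom{p}{a}(-1)^aE_{p-a}$. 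Substituting this into the left-hand side and collecting the coefficient of each $E_p$ reduces the identity to the purely combinatorial statement $\sum_{n=0}^{i}(-1)^n\binom{i}{n}\binom{m-n}{p}=\binom{m-i}{p-i}$, which follows in one line from $\binom{m-n}{p}=[x^p](1+x)^{m-n}$:
\[
\sum_{n=0}^{i}(-1)^n\binom{i}{n}[x^p](1+x)^{m-n}=[x^p](1+x)^{m}\Bigl(1-\tfrac{1}{1+x}\Bigr)^{i}=[x^p]\,x^{i}(1+x)^{m-i}=\binom{m-i}{p-i}.
\]

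I expect the only genuine difficulty to be the bookkeeping of the parity-dependent signs $(-1)^{\lfloor(k-1)/2\rfloor}$ and $(-1)^{\lfloor k/2\rfloor}$ together with the vanishing of $E_n$ and $F_n$ by parity; once these are tracked carefully, the reflection identity collapses to the standard binomial identity above. Feeding this back into \eqref{eq:rrE} and \eqref{eq:rrF} then matches the Entringer recurrence term by term and closes the induction.
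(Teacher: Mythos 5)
Your proof is correct, but it follows a genuinely different route from the paper. The paper's proof is a direct combinatorial one: it decomposes the set of down-up permutations in $\mathfrak{S}_{k+1}$ starting with $j+1$ by an inclusion--exclusion over the length of the initial descending run, obtaining $\mathbb{E}(k,j)=\sum_{n\,\mathrm{odd}}(-1)^{(n-1)/2}\binom{j}{n}\mathbb{E}_{k-n}$ (plus a correction term when $k=j$ is even), and then identifies this with $E(k,j)$ or $F(k,j)$ from the definitions of $\mathbb{E}_n$, $E_n$, $F_n$. You instead verify that $E$ (odd $k$) and $F$ (even $k$) satisfy the defining Entringer recurrence, which requires bridging the parity and reflection mismatch between the intrinsic recurrences \eqref{eq:rrE}--\eqref{eq:rrF} and the Entringer recurrence; your reflection identity $\sum_{n=0}^{i}\binom{i}{n}F_{m-n}=(-1)^{m}\sum_{n=0}^{m-i}\binom{m-i}{n}E_{m-n}$ is correct (I checked the sign bookkeeping in both parity cases, and your generating-function reduction to $\sum_{n}(-1)^n\binom{i}{n}\binom{m-n}{p}=\binom{m-i}{p-i}$ is valid, using that only even $p$ contribute since $E_p=0$ for odd $p$). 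Two remarks on how this sits relative to the paper. First, your reflection identity is essentially the content of Theorem \ref{th:Ekk-j}, which the paper \emph{deduces from} Theorem \ref{th:BE=} and Theorem \ref{th:rrEFGH}; you prove it independently from the generating-function relation $F_p=\sum_a\binom{p}{a}(-1)^aE_{p-a}$ and then derive Theorem \ref{th:BE=} from it, reversing the logical order (with no circularity). Second, the paper's remark explicitly notes that the corresponding formula in \cite{KPP} is derived from the recurrence relation and that the paper's aim is a ``more direct alternative proof''; your argument is therefore closer in spirit to the \cite{KPP} derivation. What your approach buys is that it needs no combinatorial decomposition of permutations and delivers Theorem \ref{th:Ekk-j} as a free byproduct; what the paper's approach buys is a self-contained bijective explanation of why the alternating-sign binomial sums count permutations at all.
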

\begin{remark}
This is essentially the same as the second equation of Proposition 2 in \cite{KPP}, except for the case $k=j$. They derive this formula from the recurrence relation. Here, we provide a more direct alternative proof.
\end{remark}
\begin{remark}
This theorem also implies that $E(k,j)$ is non-negative when $k$ is odd, and $F(k,j)$ is non-negative when $k$ is even. Furthermore, combining this with Theorem \ref{th:rrEFGH}, it can be concluded that $E(k,j)$ and $F(k,j)$ are non-negative for all $k$.
\end{remark}
\begin{proof}
Since $\mathbb{E}_{n}$ is equal to the total number of down-up permutations in $\mathfrak{S}_n$, when $k \ge 1$, the number of down-up permutations in $\mathfrak{S}_{k+1}$ starting with $j+1$ is equal to 
\begin{align*}
&\mathbb{E}(k, j) \\
&= \left|\left\{\sigma \in \mathfrak{S}_{k+1} \mathrel{}\middle|\mathrel{}  \begin{aligned}
\sigma(1) = j+1 > \sigma(2) < \sigma(3) > \sigma(4) <\cdots \sigma(k+1)
\end{aligned}\right\}\right|\\
&=\sum_{\substack{n=1 \\ n:\text{odd}}}^{j}(-1)^{(n-1)/2}\left|\left\{\sigma \in \mathfrak{S}_{k+1} \mathrel{}\middle|\mathrel{}  \begin{gathered}
\sigma(1)= j+1 > \sigma(2) > \cdots > \sigma(n+1),\\
\sigma(n+2) > \sigma(k+3) < \sigma(k+4) > \cdots \sigma(k+1)
\end{gathered}\right\}\right|\\
&\quad +(-1)^{k/2}\delta_{k: \textup{even}}\left|\left\{\sigma \in \mathfrak{S}_{k+1} \mathrel{}\middle|\mathrel{}  \begin{gathered}
\sigma(1)= j+1 > \sigma(2) > \cdots > \sigma(k+1)
\end{gathered}\right\}\right|\\
&=\begin{dcases}\sum_{\substack{n=1 \\ n:\text{odd}}}^{j}(-1)^{(n-1)/2}\binom{j}{n}\mathbb{E}_{k-n} + (-1)^{ k/2 }& (k=j: \textup{even}),\\
\sum_{\substack{n=1 \\ n:\text{odd}}}^{j}(-1)^{(n-1)/2}\binom{j}{n}\mathbb{E}_{k-n} & \textup{(otherwise)},
\end{dcases}
\end{align*}
where $\delta_{k: \textup{even}} = 1$ (resp. 0) if $k$ is even (resp. odd).
The set in the third line is the set of permutations decreasing from $\sigma(1)$ to $\sigma(n+1)$ and down-up from $\sigma(n+2)$ to $\sigma(k+1)$. 
For example, if $\sigma \in \mathfrak{S}_{k+1}$  is represented by $(\sigma(1), \dots, \sigma(k+1))$, then
\begin{align*}
&\mathbb{E}(4, 4) \\
&= \left|\left\{\begin{aligned}
 (5, 1, 3, 2, 4),
 (5, 1, 4, 2, 3),
 (5, 2, 3, 1, 4),
 (5, 2, 4, 1, 3),
 (5, 3, 4, 1, 2)
\end{aligned}\right\}\right|\\
&= +\left|\left\{\begin{aligned}
& (5, 1, 3, 2, 4),
 (5, 1, 4, 2, 3),
 (5, 2, 3, 1, 4),
 (5, 2, 4, 1, 3),\\
& (5, 3, 2, 1, 4),
 (5, 3, 4, 1, 2),
 (5, 4, 2, 1, 3),
 (5, 4, 3, 1, 2)
\end{aligned}\right\}\right|\\
&\quad -\left|\left\{\begin{aligned}
 (5, 3, 2, 1, 4),
 (5, 4, 2, 1, 3),
 (5, 4, 3, 1, 2),
 (5, 4, 3, 2, 1)
\end{aligned}\right\}\right|\\
&\quad +\left|\left\{\begin{aligned}
(5, 4, 3, 2, 1)
\end{aligned}\right\}\right|\\
&= \binom{4}{1}\mathbb{E}_{3}-\binom{4}{3}\mathbb{E}_{1}+1.
\end{align*}
Finally, using the definition of $E(k, j)$, $F(k, j)$ and $\mathbb{E}_{n}$, we can prove the theorem.
\end{proof}

The following theorem is also used in Section \ref{se:relTt}.
\begin{theorem} \label{th:Ekk-j} We have
\begin{align*}
\mathbb{E}(k,k-j) &= \begin{dcases}
-(-1)^{\lfloor (k-1)/2 \rfloor}\sum_{n=0}^{j} \binom{j}{n} E_{k-n}  & (k : \textup{even}),\\
-(-1)^{\lfloor k/2 \rfloor}\sum_{n=0}^{j} \binom{j}{n} F_{k-n}  & (k : \textup{odd}).
\end{dcases}
\end{align*}
\end{theorem}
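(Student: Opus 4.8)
The plan is to reduce $\mathbb{E}(k,k-j)$ to a sum of Entringer numbers one row up by telescoping the recurrence, and then to invoke Theorem \ref{th:BE=} together with a hockey-stick identity to reach the stated closed form.

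First I would substitute $j \mapsto k-j$ into the recurrence $\mathbb{E}(k,j) = \mathbb{E}(k,j-1) + \mathbb{E}(k-1,k-j)$ to get $\mathbb{E}(k,k-j) = \mathbb{E}(k,k-j-1) + \mathbb{E}(k-1,j)$, valid for $0 \le j \le k-1$. Telescoping this downward from the top value yields
\[
\mathbb{E}(k,k-j) = \mathbb{E}(k,k) - \sum_{i=0}^{j-1}\mathbb{E}(k-1,i).
\]
Since a down-up permutation of $\mathfrak{S}_{k+1}$ with $\sigma(1)=k+1$ is precisely one whose tail $\sigma(2),\dots,\sigma(k+1)$ is an arbitrary up-down permutation of the remaining $k$ letters, the base value is $\mathbb{E}(k,k)=\mathbb{E}_{k}$. (Equivalently, passing to the complement $k+2-\sigma$ shows $\mathbb{E}(k,k-j)$ counts the up-down permutations starting with $j+1$, i.e.\ those obtained by prepending $j+1$ to a down-up arrangement of the other $k$ letters whose first entry exceeds $j+1$; this gives the same identity combinatorially.)

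Next, assuming $k\ge 2$, I would feed Theorem \ref{th:BE=} into the sum: $\mathbb{E}(k-1,i)$ equals $E(k-1,i)$ for even $k$ and $F(k-1,i)$ for odd $k$. Writing each summand as $(-1)^{\lfloor\cdot\rfloor}\sum_{n=1}^{i}\binom{i}{n}E_{k-1-n}$ (resp.\ with $F_{k-1-n}$), interchanging the order of summation, applying the hockey-stick identity $\sum_{i=n}^{j-1}\binom{i}{n}=\binom{j}{n+1}$, and reindexing $m=n+1$ collapses the double sum into the single sum $\sum_{m=2}^{j}\binom{j}{m}E_{k-m}$ (resp.\ $F_{k-m}$). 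Subtracting this from $\mathbb{E}_{k}$ already reproduces every term of the claimed expression $\sum_{n=0}^{j}\binom{j}{n}E_{k-n}$ (resp.\ $F_{k-n}$) with index $n\ge 2$, up to the correct global sign.

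The last step is to verify that $\mathbb{E}_{k}$ absorbs exactly the missing $n=0$ and $n=1$ terms, and this is where I expect the main obstacle to lie: the identity closes only through the parity vanishing of the auxiliary numbers. For even $k$ one has $E_{k-1}=0$, killing the $n=1$ term, while $\mathbb{E}_{k}=i^{k}E_{k}=(-1)^{k/2}E_{k}$ supplies the $n=0$ term with the right sign; for odd $k\ge 3$ one uses $F_{k-1}=0$ and $\mathbb{E}_{k}=i^{k+1}F_{k}=-(-1)^{(k-1)/2}F_{k}$. The value $k=1$ lies outside the hockey-stick step, since it would invoke Theorem \ref{th:BE=} at $k-1=0$; it should be checked directly from $\mathbb{E}(1,1)=1$ and $\mathbb{E}(1,0)=0$. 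Thus the only delicate point is the sign and boundary bookkeeping in matching the low-order terms, which hinges entirely on these parity facts and on the exact value $\mathbb{E}(k,k)=\mathbb{E}_{k}$.
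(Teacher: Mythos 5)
Your argument is correct, but it takes a genuinely different route from the paper's. The paper proves the identity in one step ``upward'': it observes that the stated right-hand side equals $E(k+1,j+1)-E(k+1,j)$ (resp.\ the analogous difference of $F$'s) via the recurrence relations of Theorem \ref{th:rrEFGH}, converts this to $\mathbb{E}(k+1,j+1)-\mathbb{E}(k+1,j)$ by Theorem \ref{th:BE=} (applicable because $k+1$ has the opposite parity), and then reads off $\mathbb{E}(k,k-j)$ directly from the Entringer recurrence. You instead go ``downward'': telescoping the Entringer recurrence to $\mathbb{E}(k,k-j)=\mathbb{E}(k,k)-\sum_{i=0}^{j-1}\mathbb{E}(k-1,i)$, applying Theorem \ref{th:BE=} at level $k-1$, and collapsing with the hockey-stick identity together with the boundary facts $\mathbb{E}(k,k)=\mathbb{E}_{k}$, $E_{k-1}=0$ for $k$ even, and $F_{k-1}=0$ for odd $k\ge 3$. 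Both proofs rest on Theorem \ref{th:BE=} and the Entringer recurrence, but yours bypasses Theorem \ref{th:rrEFGH} entirely, at the price of the extra summation identity and the sign and boundary bookkeeping you flag --- all of which I checked and which does close correctly, including the separate verification at $k=1$. The paper's version is shorter because the one-row-up difference absorbs the $n=0$ and $n=1$ terms automatically; yours has the mild advantage of producing the combinatorially transparent intermediate identity $\mathbb{E}(k,k-j)=\mathbb{E}_{k}-\sum_{i=0}^{j-1}\mathbb{E}(k-1,i)$ along the way.
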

\begin{remark}
This is essentially the same as the first equation of Proposition 2 in \cite{KPP}, except for the case $k=j$. A similar equation can be seen in \cite{E}.
\end{remark} 
\begin{proof}
First, note that
\begin{align*}
-(-1)^{\lfloor (k-1)/2 \rfloor}\sum_{n=0}^{j} \binom{j}{n} E_{k-n} &= - E(k,j) -(-1)^{\lfloor (k-1)/2 \rfloor} E_{k}.
\end{align*}
Moreover, putting $k=k+1$ and $j=j+1$ in \eqref{eq:rrF}, we have
\begin{align*}
E(k+1,j+1) = E(k+1,j) + (-1)^{k+1}E(k,j) +  (-1)^{\lfloor k/2 \rfloor}E_{k}.
\end{align*}
Combining the above two equations with $k$ as an even number, we have
\begin{align*}
-(-1)^{\lfloor (k-1)/2 \rfloor}\sum_{n=0}^{j} \binom{j}{n} E_{k-n} &= E(k+1,j+1) - E(k+1,j)\\
&= \mathbb{E}(k+1,j+1) - \mathbb{E}(k+1,j).
\end{align*}
Finally, by the recurrence relation of Entringer numbers, we can obtain the theorem for even $k$. The same method can be used to prove the theorem for odd $k$.
\end{proof}
At the end of this section, we will prove that $\mathbb{G}(k,j)$ is equal to $G(k,j)$ or $H(k,j)$ depending on the parity of $k$.
\begin{theorem}\label{th:BG=}
We have
\begin{align}
\mathbb{G}(k,j)  = \begin{cases}
 G(k,j) & (k : \textup{odd}),\\
 H(k,j) & (k : \textup{even}).
\end{cases}
\end{align}
\end{theorem}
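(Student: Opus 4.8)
The plan is to mirror the combinatorial argument used in the proof of Theorem \ref{th:BE=}, but adapted to Dumont permutations, and to split according to the parity of $k$. Recall that $\mathbb{G}(k,j)$ counts Dumont permutations in $\mathfrak{S}_{k-1}$ starting with $j$. The natural strategy is to peel off an initial decreasing run from such a permutation and express the count via the total numbers $\mathbb{G}_{k-n}$ of Dumont permutations on the remaining symbols, using inclusion–exclusion exactly as in Theorem \ref{th:BE=}. Since $\mathbb{G}_{k-n}$ is the generating sequence appearing in the definitions of $G(k,j)$ and $H(k,j)$ (via $G_n$, since $\mathbb{G}_n = (-1)^{\lfloor n/2\rfloor}G_n$), this should produce a sum of the form $\sum_n c(j,n)\,\mathbb{G}_{k-n}$ whose coefficients $c(j,n)$ are either binomial coefficients (leading to $G(k,j)$) or the $h(j,n)$ of the definition of $H(k,j)$.

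First I would treat the case of odd $k$, which I expect to be the cleaner one and parallel to Theorem \ref{th:BE=}. Here $\mathfrak{S}_{k-1}$ has even cardinality, and I would classify Dumont permutations starting with $j$ by the length of the maximal initial descending segment $\sigma(1)=j>\sigma(2)>\cdots$, then apply inclusion–exclusion to convert "the initial block is a genuine Dumont permutation on the prefix" into counts of Dumont permutations on a shorter alphabet. Keeping careful track of the sign $(-1)^{\lfloor n/2\rfloor}$ coming from the conversion $\mathbb{G}_n=(-1)^{\lfloor n/2\rfloor}G_n$ and matching it against the global sign $(-1)^{\lfloor(k-1)/2\rfloor}$ in the definition of $G(k,j)$, I expect the binomial coefficients $\binom{j}{n}$ to emerge naturally, yielding $\mathbb{G}(k,j)=G(k,j)$.

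Alternatively, and more efficiently, I would prove the theorem by induction using the recurrence relations already established. Both $\mathbb{G}(k,j)$ (from the recurrences in the paper) and the pair $(G(k,j),H(k,j))$ (from Theorem \ref{th:rrEFGH}) are pinned down by their recurrences plus initial data. For odd $k\ge 3$ the paper gives $\mathbb{G}(k,j)=\mathbb{G}(k+1,j)+\mathbb{G}(k+1,j+1)$, relating the odd case to the even case; for even $k\ge 4$ the two recurrences express $\mathbb{G}(k,j)$ in terms of row $k-2$. The cleanest route is therefore to verify the base cases $k=1,2$ directly against $G(k,j)$ and $H(k,j)$, then show that $G$ (for odd $k$) and $H$ (for even $k$) satisfy the \emph{same} recurrences as $\mathbb{G}$. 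The key bridge is equation \eqref{eq:rrHG}, namely $H(k,j)=-H(k,j-1)+(-1)^k G(k-1,j-1)+\mathbb{G}_{k-1}$, together with \eqref{eq:rrG} and \eqref{eq:rrH}: these let me convert the even-$k$ recurrence for $H$ into the summed form $\mathbb{G}(k,j)=\mathbb{G}(k,j-1)\mp\sum_m \mathbb{G}(k-2,m)$ and the odd-$k$ relation $G(k,j)=H(k+1,j)+H(k+1,j+1)$.

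The main obstacle will be the even-$k$ recurrences for $\mathbb{G}(k,j)$, which are not simple two-term relations but involve \emph{sums} $\sum_{m=0}^{j-2}\mathbb{G}(k-2,m)$ or $\sum_{m=j-1}^{k-2}\mathbb{G}(k-2,m)$ and depend on the parity of $j$. Reproducing these from the recurrences for $H$ and $G$ will require telescoping \eqref{eq:rrH}/\eqref{eq:rrHG} over the second index and carefully handling the alternating sign $(-1)^j$ in \eqref{eq:rrH}, which is precisely what flips the sign of the inner sum between odd and even $j$. Verifying that the telescoped sums have the correct ranges and that all boundary terms $\mathbb{G}_{k-1}$ cancel or combine correctly is the delicate bookkeeping step; everything else is routine once the base cases $\mathbb{G}(1,1)=G(1,1)$ and $\mathbb{G}(2,1)=H(2,1)$, $\mathbb{G}(2,2)=H(2,2)$ are checked against the tables.
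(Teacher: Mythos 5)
Your second (recurrence-based) strategy is exactly the paper's proof: check the base cases, telescope \eqref{eq:rrH} over the second index to show that $H$ satisfies the even-$k$ recurrences of $\mathbb{G}$ (with the sign $(-1)^j$ accounting for the odd/even-$j$ split), and use \eqref{eq:rrHG} to get $G(k,j)=H(k+1,j)+H(k+1,j+1)=\mathbb{G}(k+1,j)+\mathbb{G}(k+1,j+1)=\mathbb{G}(k,j)$ for odd $k$. The one step you underestimate as ``bookkeeping'' is the boundary term $H(k-1,k-1)$ produced by the even-$j$ telescoping, which does not cancel on its own: the paper must prove $H(k,k)=0$ by a separate induction, using the generating-function identity $G(k,k)=2\delta_{k:\textup{even}}\mathbb{G}_{k}$ together with the relation $2H(k,k)=(-1)^{k}H(k-1,k-1)$ obtained by adding \eqref{eq:rrH} and \eqref{eq:rrHG}.
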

\begin{remark}
This corresponds to the explicit formula for the observation stated at the beginning of Section 4 in \cite{K}.
\end{remark}
\begin{remark}
By combining this theorem with Theorem \ref{th:rrEFGH}, we can see that $G(k,j)$ are non-negative for all $k$.
\end{remark}
\begin{proof}

We prove $\mathbb{G}(k,j) = H(k,j)$ for even $k$ by demonstrating that when $k$ is even, $H(k,j)$ satisfies the same recurrence relation as $\mathbb{G}(k,j)$, namely
\begin{align} \nonumber
&H(k,0) = 0 \quad (k \ge 0),\\  \nonumber
&H(2,2) = 0,\quad H(2,1) = 1,\\ \label{eq:Ho}
&H(k,j) = H(k, j-1) - \sum_{m=0}^{j-2}H(k-2,m)\quad \textup{($k\ge4$:even, $j\ge1$:odd)}, \\ \label{eq:He}
&H(k,j) = H(k, j-1) + \sum_{m=j-1}^{k-2}H(k-2,m)\quad \textup{($k\ge4$:even, $j\ge2$:even)}.
\end{align}
The initial conditions $H(k,0) = 0$ $(k \ge 0)$, $H(2,2) = 0$, $H(2,1) = 1$ can be verified directly by the definition of $H(k,j)$. We will now proceed to prove \eqref{eq:Ho}. 
When $j=1$, from the recurrence relation \eqref{eq:rrH}, we have
\begin{align*}
H(k,1) = H(k,0) + (-1)^{k} H(k-1,0) + \mathbb{G}_{k-1}.
\end{align*}
Since $\mathbb{G}_{k-1} = 0$ for even $k \ge 4$, we have $H(k,1) = H(k,0) = 0$, which is equation \eqref{eq:Ho} with $j=1$. Let $j \ge 2$. By putting $k=k-1$ and $j=m+1$ in the recurrence relation \eqref{eq:rrH}, we have 
\begin{align} \label{eq:Hkm}
& (-1)^{k-1} H(k-2,m) = H(k-1,m+1) - H(k-1,m) - (-1)^{m}\mathbb{G}_{k-2}.
\end{align}
Therefore, we obtain
\begin{align*}
\sum_{m=0}^{j-2}H(k-2,m) &= (-1)^{k-1} H(k-1,j-1) - (-1)^{k-1} \delta_{j: \textup{even}}\mathbb{G}_{k-2}\\
&=H(k,j-1) - H(k,j) -  (-1)^{j}\mathbb{G}_{k-1} - (-1)^{k-1} \delta_{j: \textup{even}}\mathbb{G}_{k-2}.
\end{align*}
By setting $k \ge 4$ as even and $j \ge 3$ as odd in this equation, we obtain \eqref{eq:Ho}.

By using equation \eqref{eq:Hkm} once again, we obtain
\begin{align*}
&\sum_{m=j-1}^{k-2}H(k-2,m) \\
&= (-1)^{k-1} H(k-1,k-1) - (-1)^{k-1} H(k-1,j-1) + \delta_{k-j: \textup{odd}}\mathbb{G}_{k-2}\\
&=(-1)^{k-1} H(k-1,k-1) + H(k,j) - H(k,j-1) + (-1)^{j}\mathbb{G}_{k-1} + \delta_{k-j:odd}\mathbb{G}_{k-2}.
\end{align*}
Therefore, if we can prove $H(k-1,k-1) = 0$, then \eqref{eq:He} will be derived. Hence, our focus now shifts to the proof of $H(k-1,k-1) = 0$. By using the generating function of $G_{n}$, we obtain
\begin{align*}
2\delta_{k=1} &= G_{k} + \sum_{n=0}^{k}\binom{k}{n}G_{k-n}.
\end{align*}
This implies that 
\begin{align*}
2 \delta_{k: \textup{even}}\mathbb{G}_{k} = G(k,k)
\end{align*}
since
\begin{align*}
2\delta_{k=1} -2 G_{k} = -2 \delta_{k: \text{even}} G_{k} = 2 \delta_{k: \textup{even}} (-1)^{\lfloor (k-1)/2 \rfloor} \mathbb{G}_{k}.
\end{align*}
Moreover, by adding equations \eqref{eq:rrH} and \eqref{eq:rrHG}, we have
\begin{align*}
2H(k,j) = (-1)^{k} H(k-1,j-1) + (-1)^{k} G(k-1,j-1) - (-1)^{j}\mathbb{G}_{k-1} + \mathbb{G}_{k-1}.
\end{align*}
Therefore, we obtain
\begin{align*}
2H(k,k) &= (-1)^{k} H(k-1,k-1) + (-1)^{k} 2 \delta_{k: \textup{odd}}\mathbb{G}_{k-1} - (-1)^{k}\mathbb{G}_{k-1} + \mathbb{G}_{k-1}\\
&= (-1)^{k} H(k-1,k-1).
\end{align*}
From this recurrence relation and the initial condition $H(0,0) = 0$, it can be deduced inductively that $H(k,k) = 0$ for all $k$. Thus, \eqref{eq:He} is proven, and the proof of $\mathbb{G}(k,j) = H(k,j)$ for even $k$ is complete. Next, we will proceed to the proof of $\mathbb{G}(k,j) = G(k,j)$ for odd $k$. By putting $k=k+1$ and $j=j+1$ in the recurrence relation \eqref{eq:rrHG}, we have
\begin{align*}
H(k+1,j+1) = - H(k+1,j) + (-1)^{k+1} G(k,j) + \mathbb{G}_{k}.
\end{align*}
Therefore, when $k \ge 3$ is odd, it follows that
\begin{align*}
G(k,j) = H(k+1,j) + H(k+1,j+1) = \mathbb{G}(k+1,j) + \mathbb{G}(k+1,j+1).
\end{align*}
This value is found to be equal to $\mathbb{G}(k,j)$ from the recurrence relation satisfied by $\mathbb{G}(k,j)$. Finally, by directly verifying that $G(1,0) = G(1,1) = 0$, we can complete the proof of the theorem.
\end{proof}

\section{Relations of multiple $\tilde{T}$-values} \label{se:relTt}
The purpose of this section is to prove the main theorems (Theorem \ref{th:Tt+T=EG}, \ref{th:Tt+T=G}.  \ref{th:TorTt1=EE}, \ref{th:TorTt=E} and \ref{th:TorTt2=EE}). In Section \ref{ss:General relations}, we give general relations of multiple $\tilde{T}$-values (Theorem \ref{th: = C}, \ref{th: = D}, \ref{th: =E} and \ref{th: =F},), and from Section \ref{ss:depth one} onwards, we specialize these relations to prove the main theorems.
\subsection{Preliminaries} 

The notation $\delta_{P}$ is defined as $1$ if condition $P$ is satisfied, and $0$ otherwise.
For an index $\mathbf{k} = (k_{1},\dots,k_{r})$, $|\mathbf{k}|:=k_{1}+\dots+k_{r}$ is called the weight of $|\mathbf{k}|$ and $r$ is called the depth of $|\mathbf{k}|$. An index $\mathbf{k}$ is called admissible when $\mathbf{k} \in \mathbb{Z}_{\ge 1}^{r}$ and $k_{r} \ge 2$. Let $\emptyset$ denote the empty index and let $T(\emptyset) = \tilde{T}(\emptyset) = 1$. Let $\{k\}^{n}$ denotes $n$ repetitions of $k$. The dual index of an admissible index $\mathbf{k}$ is denoted by $\mathbf{k}^{\dag}$. In other words, 
\[\mathbf{k}^{\dag}=(\{1\}^{b_{h}-1},a_{h}+1,\dots,\{1\}^{b_{2}-1},a_{2}+1,\{1\}^{b_{1}-1},a_{1}+1),\]
when
\[\mathbf{k}=(\{1\}^{a_{1}-1},b_{1}+1,\{1\}^{a_{2}-1},b_{2}+1,\dots,\{1\}^{a_{h}-1},b_{h}+1)\quad (a_{i}, b_{i} \ge 1).\]
We put $\mathbf{k}_{j} = (k_{1}, \dots, k_{j} )$ and $\mathbf{k}^{j} = (k_{j+1}, \dots, k_{r})$ for $\mathbf{k} = (k_{1},\dots,k_{r})$.
We define $\mathbf{k}^{(n)}$ with superscript enclosed in parentheses as follows; $\mathbf{k}^{(0)}=\mathbf{k}$,  
\begin{align*}
\mathbf{k}^{(1)}=&
  \begin{cases}
    (k_{1},\dots,k_{r-1},k_{r}-1)&(k_{r}>1),\\
    (k_{1},\dots,k_{r-1})&(k_{r}=1),\\
    \phi&(r=1, k_{r}=1),
  \end{cases}
\end{align*}
and inductively $\mathbf{k}^{(n)}=(\mathbf{k}^{(n-1)})^{(1)}\ (n>1)$.

We define the multiple $A$-function as
\begin{align*}
A(\mathbf{k};z) = 2^{r}\sum_{\substack{0<m_{1}<\cdots<m_{r}\\m_{j}\equiv j\ {\rm mod}\ 2}}\frac{z^{m_{n}}}{m^{k_{1}}_{1}\cdots m^{k_{r}}_{r}}.
\end{align*}
It can be seen from the definitions that $A(\mathbf{k};1) = {T}(\mathbf{k})$ and $A(\mathbf{k};i) = i^{r}\tilde{T}(\mathbf{k})$. The multiple $A$-function has the following iterated integral representation:
\begin{align*}
A(\mathbf{k};z) = \int_{0}^{z} \Omega_{1} \circ \Omega_{2} \circ \cdots \circ \Omega_{|\mathbf{k}|} = \int_{0}^{z}\cdots \left(\int_{0}^{t_{3}}\left(\int_{0}^{t_{2}} \Omega_{1}\right) \Omega_{2}\right) \cdots \Omega_{|\mathbf{k}|}, 
\end{align*}
where 
\begin{align*}
\Omega_{i} = \begin{dcases}
\frac{2dt_{i}}{1-t_{i}^2}& (i \in \{1, k_{1}+1, \dots, k_{1}+\cdots+k_{r-1}+1\}),\\
\frac{dt_{i}}{t_{i}}&\textup{(otherwise)}.
\end{dcases}
\end{align*}
Moreover, from this expression, multiple $A$-functions satisfy the shuffle product formula.

The following facts stated in \cite{KT3} will be used in our calculations later:
\begin{align}\label{eq:TorTt=E}
\begin{rcases}
{T}({k+1}) & (k:\textup{odd})\\
\tilde{T}({k+1}) & (k:\textup{even})
\end{rcases}=\frac{\mathbb{E}_{k}}{k!} \left(\frac{\pi}{2}\right)^{k+1},\\ \nonumber
\tilde{T}(\{1\}^{n}) =  \tilde{T}(1)^{n}/n! = (\pi/2)^{n}/n!.
\end{align}
The formula \eqref{eq:TorTt=E} will be derived again from our calculations.

\subsection{General relations} \label{ss:General relations}
We first introduce the following notation.
\begin{definition} \label{def;calT}
Let $\{a_{n}\}_{n = 0}^{\infty}$ be any sequence.
For $h \in \mathbb{Z}_{\ge 0}$ and admissible index $\mathbf{k} = (k_{1}, \dots, k_{r})$, we define
\begin{align*}
\mathcal{T}(a_{n};\mathbf{k};h) = \sum_{n=0}^{k_{r}-2} \frac{a_{n}}{n!} \left(\frac{\pi}{2}\right)^{n}\tilde{T}((\mathbf{k}^{(n)})^{\dag},\{1\}^{h}).
\end{align*}
\end{definition}
Then, the general relations to be proved can be stated as follows.
\begin{theorem} \label{th: = C} For $k_{r} \ge 2$, we have
\begin{align*}
& i^{r+1} \frac{\pi}{2} \tilde{T}(\mathbf{k}^{(1)}) + i^{r}\frac{(-1)^{(k_{r}-1)/2}{C}_{k_{r}-1}}{(k_{r}-1)!} \left(\frac{\pi}{2}\right)^{k_{r}-1} \tilde{T}(\mathbf{k}^{(k_{r}-1)}) \\
& + \sum_{\substack{j=0\\j:{\rm even}}}^{k_{r}-2}\frac{(-1)^{j/2}{C}_{j}}{j!}\left(\frac{\pi}{2}\right)^{j} {T}(\mathbf{k}^{(j)}) - i^{r} \sum_{\substack{j=0\\j:{\rm even}}}^{k_{r}-1}\frac{(-1)^{j/2}{D}_{j}}{j!}\left(\frac{\pi}{2}\right)^{j} \tilde{T}(\mathbf{k}^{(j)}) \\
& = i^{r-|\mathbf{k}|} \mathcal{T}({C}_{n};\mathbf{k};0) + \sum_{j=1}^{r-1}\sum_{h=0}^{k_{j}-1} i^{r-|\mathbf{k}^{j}|-h} \tilde{T}((\mathbf{k}_{j})^{(h)}) \mathcal{T}({C}_{n};\mathbf{k}^{j};h).
\end{align*}
\end{theorem}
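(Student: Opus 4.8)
The plan is to prove the identity at the level of the multiple $A$-function $A(\mathbf{k};z)$, using its iterated-integral representation and the shuffle product rather than the defining series; this is what keeps the argument uniform in the depth $r$, where the Kaneko--Tsumura induction breaks down. The organising observation is that the first term on the left is already a shuffle product: since $A((1);z)=\int_0^z \frac{2\,dt}{1-t^2}=\log\frac{1+z}{1-z}$ and $A((1);i)=i\pi/2$, one has $i^{r+1}\frac\pi2\,\tilde T(\mathbf{k}^{(1)})=A((1);i)\,A(\mathbf{k}^{(1)};i)$. So I would aim for a single $z$-dependent functional identity, valid along the integration path, in which this product is balanced against Bernoulli-type correction terms and the dual $\mathcal T$-sums, and then read off the statement by evaluating at the appropriate points.

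First I would peel off the last block of $\mathbf{k}$: because $k_r\ge2$, the final $k_r$ one-forms are $\frac{2\,dt}{1-t^2}$ (the block-start) followed by $k_r-1$ copies of $\frac{dt}{t}$. The key device is the substitution $t=\tanh(u/2)$, under which $\frac{2\,dt}{1-t^2}=du$ and $\frac{dt}{t}=\frac{du}{\sinh u}$, while the endpoint $z=i$ becomes $u=i\pi/2$; this is exactly why every term carries a power of $\pi/2$. The Bernoulli-type sequences then enter through the generating functions $\frac{X}{\sinh X}=\sum_n C_n\frac{X^n}{n!}$ and its twist $\frac{X e^{-X}}{\sinh X}=\sum_n D_n\frac{X^n}{n!}$, which expand the last-block integral into a finite sum indexed by the number $j$ of reductions, each weighted by $\frac{C_j}{j!}(\pi/2)^j$ or $\frac{D_j}{j!}(\pi/2)^j$ and attached to the reduced index $\mathbf{k}^{(j)}$. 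Both sequences, and with them both $T(\mathbf{k}^{(j)})$ and $\tilde T(\mathbf{k}^{(j)})$, appear because the natural way to resolve the parity constraint $m_r\equiv r\pmod2$ on the final summation is to combine the values of $A(\mathbf{k}^{(j)};z)$ at the fourth roots of unity: those at $z=\pm1$ produce the $T$-contributions, those at $z=\pm i$ the $\tilde T$-contributions. The restriction to even $j$ reflects the vanishing of $C_n$ and $D_n$ at the relevant odd indices, and the isolated $C_{k_r-1}$ term is the top ($j=k_r-1$) boundary term of this expansion, nonzero only for odd $k_r$.

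The right-hand side should emerge by running the same mechanism at each of the internal block-start forms. Processing the form that opens block $s+1$ factors the iterated integral into a left factor over the first $s$ blocks, contributing $\tilde T((\mathbf{k}_s)^{(h)})$ after $h$ reductions adjacent to the cut, and a right factor over the remaining blocks; reversing the path in the right factor via iterated-integral duality converts it into the dual index $(\mathbf{k}^{(n)})^{\dag}$ with a trailing string $\{1\}^h$, i.e.\ precisely $\mathcal T(C_n;\mathbf{k}^s;h)$. The $s=0$ case (empty left factor) yields the lone term $i^{\,r-|\mathbf{k}|}\mathcal T(C_n;\mathbf{k};0)$, and $s=1,\dots,r-1$ assemble the double sum, the prefactors $i^{\,r-|\mathbf{k}^s|-h}$ being forced by the relation $A(\cdot;i)=i^{(\mathrm{depth})}\tilde T(\cdot)$ applied to each factor.

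I expect the main obstacle to be the regularisation and the sign/power-of-$i$ accounting that glue these pieces together, rather than any single step. The forms $\frac{dt}{t}$ and $\frac{dt}{1-t}$ are singular at $t=0$ and $t=1$, so the individual contributions diverge and only the admissible combination ($k_r\ge2$) is finite; making this rigorous needs shuffle-regularisation (or an explicit truncation of the path with $\varepsilon\to0$) together with a check that the regularising constants cancel, which is exactly where the boundary value $\tilde T(1)=\pi/2$ enters. Tracking the factors $i^{r}$, the floor-sign factors and the parities of the $k_i$ through the duality step is the most error-prone part, so I would first verify the whole scheme in depth one, where the cut sum is empty and the statement collapses to a single-block computation, and only then assemble the general case.
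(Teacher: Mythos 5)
Your outline has the right raw ingredients (the endpoint $u=i\pi/2$ explaining the powers of $\pi/2$, the generating functions $X/\sinh X$ and $Xe^{-X}/\sinh X$ for $C_n$ and $D_n$, duality for the $\mathcal{T}$-terms), but as it stands it is a plan with the actual engine of the proof missing. The paper does not manipulate the iterated integrals directly: it starts from the identity $T(\mathbf{k})=\sum_{h=0}^{|\mathbf{k}|}A(\mathbf{k}^{(h)};i)\,\overline{A((\mathbf{k}^{\dag})^{(|\mathbf{k}|-h)};i)}$ (quoted from the author's earlier work \cite{U3}), rewrites it as equation \eqref{eq:T=}, packages it into the formal power series \eqref{eq: fps} in a variable $X$ marking $k_{r}$, and then obtains Theorem \ref{th: = C} by multiplying by $\sum_{n}C_{n}(-i\pi X/2)^{n}/n!$ and comparing coefficients; the simultaneous appearance of $C_{j}$ with $T(\mathbf{k}^{(j)})$ and $D_{j}$ with $\tilde{T}(\mathbf{k}^{(j)})$ is exactly the identity $e^{-i(\pi/2)X}\sum_{n}C_{n}(-i\pi X/2)^{n}/n!=\sum_{n}D_{n}(i\pi X/2)^{n}/n!$ acting on the re-indexed sum $\sum_{h}\tilde{T}(\mathbf{k}^{(h)})(-i\pi/2)^{h}/h!$. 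Your sketch gestures at ``reversing the path in the right factor via iterated-integral duality,'' which is morally this decomposition, but you never state the precise identity that makes the products $\tilde{T}((\mathbf{k}_{j})^{(h)})\,\mathcal{T}(C_{n};\mathbf{k}^{j};h)$ sum up to something expressible through $T(\mathbf{k}^{(j)})$ and $\tilde{T}(\mathbf{k}^{(j)})$. That identity is a theorem in its own right, and without it (or a full reconstruction of it) the two sides of the claimed equality never meet.

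There is also a step that would fail as described. You propose to obtain the $T(\mathbf{k}^{(j)})$-contributions by evaluating the $\tanh(u/2)$-substituted integral ``at $z=\pm1$.'' Under $t=\tanh(u/2)$ the point $t=1$ corresponds to $u=+\infty$, which lies far outside the disc of convergence $|u|<\pi$ of $u/\sinh u=\sum_{n}C_{n}u^{n}/n!$, so the term-by-term expansion that produces the finite sums $\sum_{j}\frac{(-1)^{j/2}C_{j}}{j!}(\pi/2)^{j}(\cdot)$ cannot be justified there; it only makes sense at the endpoint $u=i\pi/2$, i.e.\ for the $\tilde{T}$-values. In the paper the $T(\mathbf{k}^{(j)})$ terms never come from an integral expansion at all --- they arise from the purely formal Cauchy product of $\sum_{k_{r}\ge2}T(\mathbf{k})X^{k_{r}}$ with the $C_{n}$-generating series, so no analytic continuation is needed. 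To salvage your route you would either have to import the \cite{U3} decomposition (at which point you are reproducing the paper's proof) or find a genuinely different analytic argument for the $z=1$ side; the proposal as written supplies neither.
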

\begin{theorem} \label{th: = D} For $k_{r} \ge 3$, we have
\begin{align*}
& i \frac{\pi}{2} {T}(\mathbf{k}^{(1)}) + i^{r}\frac{(-1)^{(k_{r}-1)/2} {D}_{k_{r}-1}}{(k_{r}-1)!} \left(\frac{\pi}{2}\right)^{k_{r}-1} \tilde{T}(\mathbf{k}^{(k_{r}-1)}) \\
& + \sum_{\substack{j=0\\j:{\rm even}}}^{k_{r}-2}\frac{(-1)^{j/2}{D}_{j}}{j!}\left(\frac{\pi}{2}\right)^{j} {T}(\mathbf{k}^{(j)}) - i^{r} \sum_{\substack{j=0\\j:{\rm even}}}^{k_{r}-1}\frac{(-1)^{j/2}{C}_{j}}{j!}\left(\frac{\pi}{2}\right)^{j} \tilde{T}(\mathbf{k}^{(j)})  \\
& = i^{r-|\mathbf{k}|} \mathcal{T}({D}_{n};\mathbf{k};0) + \sum_{j=1}^{r-1}\sum_{h=0}^{k_{j}-1} i^{r-|\mathbf{k}^{j}|-h} \tilde{T}((\mathbf{k}_{j})^{(h)}) \mathcal{T}({D}_{n};\mathbf{k}^{j};h).
\end{align*}
\end{theorem}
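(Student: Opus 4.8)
The plan is to prove Theorem~\ref{th: = D} by the same generating-function computation that yields Theorem~\ref{th: = C}, of which it is the $C\leftrightarrow D$ companion. I would work entirely with the multiple $A$-function through its iterated-integral representation, using that $A(\mathbf{k};1)=T(\mathbf{k})$ and $A(\mathbf{k};i)=i^{r}\tilde{T}(\mathbf{k})$ and that these functions obey the shuffle product formula. Because $k_{r}\ge 2$ the terminal one-form of $A(\mathbf{k};z)$ is $dt/t$, so $\frac{d}{dz}A(\mathbf{k};z)=\frac1z A(\mathbf{k}^{(1)};z)$; iterating this down the last block and then crossing a block-opening form $2\,dt/(1-t^{2})$ produces the function $L(t)=\log\frac{1+t}{1-t}$, whose value $L(i)=i\pi/2$ accounts for the factors $(\pi/2)^{n}/n!$ and the powers of $i$ in the statement. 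Summing the resulting contributions over the number $n$ of decrements with weight $X^{n}/n!$ converts the block-opening kernels into $\bigl(\frac{1+t}{1-t}\bigr)^{\alpha}$ and, after the remaining integration, into a hyperbolic generating function. The duality substitution $t\mapsto\frac{1-t}{1+t}$ is what turns the iterated integrals into the dual indices $(\mathbf{k}^{(n)})^{\dag}$ with a trailing string $\{1\}^{h}$, while cutting $\mathbf{k}$ at depth $j$ (here $\mathbf{k}_{r-1}=(k_{1},\dots,k_{r-1})$ is the relevant truncation) produces the shuffle-type double sum $\sum_{j=1}^{r-1}\sum_{h}$ on the right-hand side.

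The bookkeeping reason that Theorem~\ref{th: = D} is exactly Theorem~\ref{th: = C} with $C_{n}$ and $D_{n}$ interchanged is the identity
\[
\sum_{n\ge 0}D_{n}\frac{X^{n}}{n!}=\frac{2Xe^{-X}}{e^{X}-e^{-X}}=e^{-X}\sum_{n\ge 0}C_{n}\frac{X^{n}}{n!},
\]
so the two generating functions differ only by the factor $e^{-X}$. In the computation this factor reflects the passage between the two boundary values $z=i$ and $z=1$: the leading term $i^{r+1}\tfrac\pi2\tilde{T}(\mathbf{k}^{(1)})=i\tfrac\pi2 A(\mathbf{k}^{(1)};i)$ of Theorem~\ref{th: = C} becomes $i\tfrac\pi2 A(\mathbf{k}^{(1)};1)=i\tfrac\pi2 T(\mathbf{k}^{(1)})$, and through the identity above the weights $C_{n}$ and $D_{n}$ attached to the $T$- and $\tilde{T}$-evaluations are interchanged, producing precisely the pattern in Theorem~\ref{th: = D}. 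Concretely, I would transcribe the computation of Theorem~\ref{th: = C} step by step, isolate the one boundary term taken at the upper limit, and re-evaluate it at $z=1$; every other manipulation is identical.

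The bulk of the proof is then routine bookkeeping. The three explicit families on the left-hand side---the $D_{k_{r}-1}$ term, the even-indexed $D$-weighted $T$-sum, and the even-indexed $C$-weighted $\tilde{T}$-sum---are exactly the residues of the generating function that fall outside the truncated range $0\le n\le k_{r}-2$ defining $\mathcal{T}(D_{n};\cdot;\cdot)$. The main effort is to split the full generating-function expression cleanly into the $\mathcal{T}(D_{n};\cdot;\cdot)$ part and these boundary residues, while keeping the parities consistent (the two sums run over even $j$, in accordance with $C_{n}=0$ for odd $n$ and $D_{n}=0$ for odd $n\ge 3$), the signs correct, and the powers of $i$ right (the exponents $i^{r-|\mathbf{k}|}$ and $i^{r-|\mathbf{k}^{j}|-h}$ record the depths and weights of the dualized indices).

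The step I expect to be the genuine obstacle---and the reason the hypothesis is $k_{r}\ge 3$ rather than the $k_{r}\ge 2$ of Theorem~\ref{th: = C}---is the admissibility of the leading term. In Theorem~\ref{th: = C} it is $\tilde{T}(\mathbf{k}^{(1)})$, which is defined for every index, whereas here it is $T(\mathbf{k}^{(1)})=A(\mathbf{k}^{(1)};1)$ with $\mathbf{k}^{(1)}=(k_{1},\dots,k_{r}-1)$. Since the kernel $2\,dt/(1-t^{2})$ has a pole at $t=1$ (indeed $L(1)=\infty$), the integral $A(\mathbf{k}^{(1)};1)$ converges only when the outermost form is $dt/t$, i.e. when $k_{r}-1\ge 2$; for $k_{r}=2$ it diverges, which matches the fact that the otherwise-harmless coefficient there would be $D_{1}=-1\neq 0$ where Theorem~\ref{th: = C} had $C_{1}=0$. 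Verifying that, once $k_{r}\ge 3$, no divergent contribution survives and the residual terms assemble into exactly the stated finite expression is where the argument must be made with care; the remainder is a faithful transcription of the proof of Theorem~\ref{th: = C}.
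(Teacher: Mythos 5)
Your proposal is correct and follows essentially the same route as the paper: Theorem~\ref{th: = D} is obtained by multiplying the single formal power series identity \eqref{eq: fps} by $\sum_{n\ge 0}D_{n}(-i\pi X/2)^{n}/n!$ instead of the $C_{n}$-series, and the two facts you isolate---the shift $\sum_{n}D_{n}X^{n}/n!=e^{-X}\sum_{n}C_{n}X^{n}/n!$, which interchanges the $C$- and $D$-weights on the $T$- and $\tilde{T}$-sums, and the nonvanishing linear term $D_{1}=-1$, which produces the $i\frac{\pi}{2}T(\mathbf{k}^{(1)})$ term and forces $k_{r}\ge 3$ for admissibility---are exactly the two ingredients the paper uses. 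The only cosmetic difference is that you sketch a re-derivation of the underlying duality identity from the iterated-integral representation, whereas the paper simply imports it from \cite{U3}.
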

\begin{theorem} \label{th: =E} For $k_{r} \ge 2$, we have
\begin{align*}
&- i^{r} \tilde{T}(\mathbf{k}) + i^{r}\delta_{k_{r}: \textup{odd}}\frac{\mathbb{E}_{k_{r}-1}}{(k_{r}-1)!} \left(\frac{\pi}{2}\right)^{k_{r}-1} \tilde{T}(\mathbf{k}^{(k_{r}-1)})\\
&+\sum_{\substack{j=0\\j:{\rm even}}}^{k_{r}-2}\frac{\mathbb{E}_{j}}{j!}\left(\frac{\pi}{2}\right)^{j}T(\mathbf{k}^{(j)}) + i^{r+1}\sum_{\substack{j=0\\j:{\rm odd}}}^{k_{r}-1}\frac{\mathbb{E}_{j}}{j!} \left(\frac{\pi}{2}\right)^{j}\tilde{T}(\mathbf{k}^{(j)}) \\
&= i^{r-|\mathbf{k}|} \mathcal{T}(E_{n};\mathbf{k};0) + \sum_{j=1}^{r-1}\sum_{h=0}^{k_{j}-1} i^{r-|\mathbf{k}^{j}|-h} \tilde{T}((\mathbf{k}_{j})^{(h)}) \mathcal{T}(E_{n};\mathbf{k}^{j};h) .
\end{align*}
\end{theorem}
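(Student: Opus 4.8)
The plan is to establish all four general relations (Theorems \ref{th: = C}--\ref{th: =F}) by one uniform generating-function computation on the multiple $A$-function, reading off each statement by feeding in the appropriate sequence; for Theorem \ref{th: =E} that sequence is the Euler numbers $E_{n}$, whose exponential generating function is $\sum_{n\ge0}E_{n}X^{n}/n!=2/(e^{X}+e^{-X})=\operatorname{sech}X$. The whole argument rests on the iterated-integral representation of $A(\mathbf{k};z)$, its shuffle product, the duality of indices, and the two evaluations $A(\mathbf{k};1)=T(\mathbf{k})$ and $A(\mathbf{k};i)=i^{r}\tilde{T}(\mathbf{k})$.

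First I would pass to a hyperbolic iterated integral via the substitution $L=\log\frac{1+z}{1-z}$, under which the start-form $\frac{2\,dt}{1-t^{2}}$ becomes $dL$ and the other form $\frac{dt}{t}$ becomes $\frac{2}{e^{L}-e^{-L}}\,dL$; this is exactly why the sequence generating functions of the shapes $2(e^{X}\mp e^{-X})^{-1}$ enter. The variable $L$ runs from $0$ to $L(1)=+\infty$ in the $T$-evaluation and from $0$ to $L(i)=i\pi/2$ in the $\tilde{T}$-evaluation, which is the origin of the constant $\pi/2$. Since each one-variable factor $A(\{1\}^{n};z)=\tfrac1{n!}\bigl(\int_{0}^{z}\tfrac{2\,dt}{1-t^{2}}\bigr)^{n}$ contributes $i^{n}(\pi/2)^{n}/n!$ at $z=i$, powers of $\pi/2$ and of $i$ accompany every term, and it is precisely the parity of these exponents that separates the $T$-values (even $j$) from the $\tilde{T}$-values (odd $j$) on the left-hand side.

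The engine is the shuffle product: I would shuffle $A(\mathbf{k};z)$ against the functions $A(\{1\}^{n};z)$ (whose values $\tilde{T}(\{1\}^{n})=(\pi/2)^{n}/n!$ supply the ubiquitous weights) and collect the resulting words by the block of $\mathbf{k}$ into which the inserted start-forms fall. Summing the contributions from the innermost block produces a series in $\pi/2$ whose coefficients are the $E_{n}$, so that $\operatorname{sech}$ materialises here; this yields the factors $\mathbb{E}_{j}$ on the left (recall $\mathbb{E}_{n}=i^{n}E_{n}$ for even $n$) and, once the remaining nested integrals are identified by path reversal with $\tilde{T}\bigl((\mathbf{k}^{(n)})^{\dag},\{1\}^{h}\bigr)$, the term $\mathcal{T}(E_{n};\mathbf{k};0)$ on the right. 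The double sum $\sum_{j=1}^{r-1}\sum_{h=0}^{k_{j}-1}$ comes from the $r-1$ remaining block boundaries, with $\tilde{T}((\mathbf{k}_{j})^{(h)})$ the factor split off before the $j$-th boundary and $i^{\,r-|\mathbf{k}^{j}|-h}$ the accumulated power of $i$. As a consistency check the depth-one case must collapse to \eqref{eq:TorTt=E}, which this computation reproduces.

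The main obstacle is the bookkeeping of this shuffle decomposition, where one must simultaneously track (i) the position of every inserted form across all $r$ blocks, (ii) the parity of the governing exponent, which dictates whether a $T$- or a $\tilde{T}$-value appears and hence forces the split on the left-hand side, and (iii) the exact powers of $i$ arising through $A(\mathbf{k};i)=i^{r}\tilde{T}(\mathbf{k})$ and the duality step. The genuinely delicate point is to isolate cleanly the generating function $\operatorname{sech}X$ with exactly the dual indices $(\mathbf{k}^{(n)})^{\dag}$ attached, rather than some convolution of them, and to confirm that evaluation at $z=i$ (where $L=i\pi/2$), together with the accumulated powers of $i$, is what converts the $\sinh$-type kernel of the substitution into the $\cosh$-type generating function of the Euler numbers relevant to Theorem \ref{th: =E}. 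This is where the explicit identity $\sum_{n}E_{n}X^{n}/n!=2/(e^{X}+e^{-X})$ and the value $L(i)=i\pi/2$ have to be used in tandem.
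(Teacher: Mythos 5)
Your proposal has a genuine gap at its core: it never states, derives, or even clearly acknowledges the identity that drives the whole argument, namely the decomposition
\begin{align*}
T(\mathbf{k}) = \sum_{h=0}^{|\mathbf{k}|} A(\mathbf{k}^{(h)};i)\,\overline{A((\mathbf{k}^{\dag})^{(|\mathbf{k}|-h)};i)}
\end{align*}
from \cite{U3}. This is what connects the $z=1$ evaluations $T(\mathbf{k}^{(j)})$ on the left-hand side to the dual-index $\tilde{T}$-values $\tilde{T}((\mathbf{k}^{(n)})^{\dag},\{1\}^{h})$ hidden inside $\mathcal{T}(E_{n};\mathbf{k};h)$ on the right. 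Shuffling $A(\mathbf{k};z)$ against $A(\{1\}^{n};z)$ and evaluating at $z=i$, as you propose, only ever produces relations among $\tilde{T}$-values at the single point $i$; it cannot by itself relate them to $T$-values at $z=1$, and ``path reversal'' gives only the plain duality of a single full path, not the bilinear sum over truncation points $h$ that appears above. In the paper the shuffle product is used only downstream, in Lemma \ref{lem;calT=CT}, to re-expand $\mathcal{T}(a_{n};\mathbf{k};h)$ after the main identity is already in hand.

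Your account of where the Euler numbers come from is also not correct. They do not ``materialise'' from the hyperbolic kernel $2/(e^{L}-e^{-L})$ of your substitution: in the paper one first sums the identity over $k_{r}\ge 2$ against $X^{k_{r}}$ to get the formal power series relation \eqref{eq: fps}, and then multiplies both sides by the externally chosen series $\sum_{n}\frac{E_{n}}{n!}\left(-\frac{\pi}{2}iX\right)^{n}$; the split of the left-hand side into even-$j$ terms with $T$ and odd-$j$ terms with $\tilde{T}$ comes from the purely formal identity $e^{-i(\pi/2)X}\sum_{n}\frac{E_{n}}{n!}\left(-\frac{\pi}{2}iX\right)^{n}=\sum_{n}\frac{F_{n}}{n!}\left(\frac{\pi}{2}iX\right)^{n}=1-i\sum_{n\,\mathrm{odd}}\frac{\mathbb{E}_{n}}{n!}\left(\frac{\pi}{2}X\right)^{n}$, where the exponential factor $e^{-i(\pi/2)X}$ itself arises from resumming $\sum_{h}\tilde{T}(\mathbf{k}^{(h)})\frac{(-i\pi/2)^{h}}{h!}$ in the variable $X$ conjugate to $k_{r}$, not from the geometry of the integration path. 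Conflating the $\sinh$-type integrand kernel with the $\operatorname{sech}$ generating function of $E_{n}$ is exactly the point at which your sketch would fail to produce the stated coefficients. To repair the proof you would need to first establish (or import) the quoted decomposition of $T(\mathbf{k})$ and then perform the convolution in $k_{r}$; the hyperbolic substitution, while a correct change of variables, does not substitute for either step.
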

\begin{theorem} \label{th: =F} For $k_{r} \ge 2$, we have
\begin{align*}
&T(\mathbf{k}) + i^{r+1} \delta_{k_{r}: \textup{even}} \frac{\mathbb{E}_{k_{r}-1}}{(k_{r}-1)!} \left(\frac{\pi}{2}\right)^{k_{r}-1} \tilde{T}(\mathbf{k}^{(k_{r}-1)})\\
&+i \sum_{\substack{j=0\\j:{\rm odd}}}^{k_{r}-2}\frac{\mathbb{E}_{j}}{j!} \left(\frac{\pi}{2}\right)^{j} {T}(\mathbf{k}^{(j)}) -i^{r}\sum_{\substack{j=0\\j:{\rm even}}}^{k_{r}-1} \frac{\mathbb{E}_{j}}{j!} \left(\frac{\pi}{2}\right)^{j} \tilde{T}(\mathbf{k}^{(j)})\\
&= i^{r-|\mathbf{k}|} \mathcal{T}(F_{n};\mathbf{k};0)  + \sum_{j=1}^{r-1}\sum_{h=0}^{k_{j}-1} i^{r-|\mathbf{k}^{j}|-h} \tilde{T}((\mathbf{k}_{j})^{(h)}) \mathcal{T}(F_{n};\mathbf{k}^{j};h) .
\end{align*}
\end{theorem}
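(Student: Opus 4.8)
The plan is to establish all four general relations (Theorems \ref{th: = C}, \ref{th: = D}, \ref{th: =E}, \ref{th: =F}) simultaneously by a single shuffle-product computation, since they differ only in the choice of auxiliary sequence ($C_n$, $D_n$, $E_n$, $F_n$) and these are precisely the sequences whose generating functions are the four building blocks $\tfrac{2X}{e^X-e^{-X}}$, $\tfrac{2Xe^{-X}}{e^X-e^{-X}}$, $\tfrac{2}{e^X+e^{-X}}$, $\tfrac{2e^{-X}}{e^X+e^{-X}}$ listed in Section \ref{se:DD}. The starting point is the iterated-integral representation of the multiple $A$-function. I would compute the shuffle product $A(\{1\}^{h+1};z) \shuffle A(\mathbf{k};z)$ — or more precisely shuffle a single "one-form" factor $\Omega_1 = \tfrac{2\,dt}{1-t^2}$ against the word for $A(\mathbf{k};z)$ — and then evaluate at $z=i$, using $A(\mathbf{k};i) = i^r \tilde T(\mathbf{k})$ and $A(\mathbf{k};1) = T(\mathbf{k})$.

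First I would write $A(\mathbf{k};z)$ as the iterated integral over the word $\Omega_1\circ\Omega_2\circ\cdots\circ\Omega_{|\mathbf{k}|}$ and insert an extra copy of the differential form corresponding to a factor like $\tfrac{2t\,dt}{1-t^2}$ or $\tfrac{2\,dt}{1-t^2}$ near the innermost integration, expanded as a power series $\sum_n (\text{seq})_n \tfrac{t^n}{n!}\cdots$. Concretely, the four generating functions above are exactly what one gets by combining $\tfrac{1}{1-t^2}$, $\tfrac{1}{1+t^2}$ (after the substitution reflecting $z=i$) with the logarithmic weight $\tfrac{dt}{t}$; expanding these kernels against the boundary values at $t=1$ and $t=i$ produces the power-series coefficients $C_j, D_j, E_j, F_j$ together with the factors $(\pi/2)^j/j!$ and the signs $(-1)^{j/2}$, matching the left-hand sides. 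The operator $\mathbf{k}\mapsto\mathbf{k}^{(n)}$ (stripping $n$ from the last entry) is precisely what records "integrating the innermost $n$ forms first," and the duality $\mathbf{k}\mapsto\mathbf{k}^\dag$ converts the resulting admissible tail back into the $\tilde T$-values with the trailing string of $1$'s that defines $\mathcal T(a_n;\mathbf{k};h)$ in Definition \ref{def;calT}.

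The right-hand side, with its double sum $\sum_{j=1}^{r-1}\sum_{h=0}^{k_j-1}$, is the combinatorial heart: it is the explicit expansion of the shuffle of the inserted form against $A(\mathbf{k};z)$, where the shuffle can deposit the extra form into the $j$-th block of $\mathbf{k}$, splitting $A(\mathbf{k};z)$ into a front factor $\tilde T((\mathbf{k}_j)^{(h)})$ and a back factor captured by $\mathcal T(a_n;\mathbf{k}^j;h)$, with the power of $i$ bookkeeping the depth and weight via $i^{r-|\mathbf{k}^j|-h}$. I would evaluate the shuffle identity at $z=i$ and separate real and imaginary parts; because the kernels $\tfrac{2X}{e^X\mp e^{-X}}$ and $\tfrac{2e^{-X}}{e^X\pm e^{-X}}$ pair $C$ with $D$ and $E$ with $F$ under the reflection $X\mapsto -X$ (equivalently $t\mapsto$ the conjugate endpoint), the even-indexed terms land on $T$-values and the odd-indexed on $\tilde T$-values, which is exactly how the four theorems distribute their summands over $T(\mathbf{k}^{(j)})$ and $\tilde T(\mathbf{k}^{(j)})$.

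The hard part will be the careful bookkeeping of the boundary-value expansion at the two endpoints $t=1$ and $t=i$, and in particular matching every power of $i$, every sign $(-1)^{j/2}$ or $(-1)^{(k_r-1)/2}$, and the special boundary terms (the isolated $i^{r+1}\tfrac{\pi}{2}\tilde T(\mathbf{k}^{(1)})$ in Theorem \ref{th: = C}, the $\delta_{k_r:\textup{odd}}$ and $\delta_{k_r:\textup{even}}$ indicators in Theorems \ref{th: =E} and \ref{th: =F}) against the correct coefficient of the generating function. These boundary terms arise from the top-degree coefficient $n=k_r-1$ in the expansion, where the series for $\mathbb E_n$ (i.e. $\sec X+\tan X$) contributes an $\mathbb E_{k_r-1}$ only when the parity of $k_r-1$ is right — hence the $\delta$'s — and they must be split off before the generic sum over $0\le j\le k_r-2$ can be written in the uniform $\mathcal T$-notation. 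Verifying that these edge contributions combine correctly with the shuffle's outer-block terms, rather than double-counting the $j=r$ (innermost) block, is where I expect the most delicate checking; once that alignment is fixed, the remaining steps are routine manipulations of binomial/shuffle sums and the parity facts about $B_n, C_n, D_n, E_n, F_n$ recorded at the start of Section \ref{se:DD}.
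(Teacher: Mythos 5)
Your plan has a genuine gap: it never invokes (or reconstructs) the one ingredient the paper's proof actually rests on, namely the identity from \cite{U3},
\begin{align*}
T(\mathbf{k}) = \sum_{h=0}^{|\mathbf{k}|} A(\mathbf{k}^{(h)};i)\,\overline{A((\mathbf{k}^{\dag})^{(|\mathbf{k}|-h)};i)},
\end{align*}
which is a path--decomposition/duality formula connecting the value of the iterated integral at $z=1$ to products of $A$-values at $z=i$ and their complex conjugates. A shuffle-product computation evaluated at a single point $z=i$ can only produce relations among $\tilde T$-values (products of integrals over the \emph{same} path equal sums of integrals over that path); it cannot make $T(\mathbf{k})$ appear on the left-hand side, and it cannot by itself produce the dual indices $(\mathbf{k}^{(n)})^{\dag}$ that sit inside $\mathcal{T}(F_{n};\mathbf{k};h)$. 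Those duals come precisely from reversing the second half of the decomposed path in the displayed identity. Without that connection formula, ``inserting an extra form and expanding kernels at the endpoints $t=1$ and $t=i$'' has no mechanism to bridge the two endpoints, and I do not see how your computation would terminate in the stated identity.

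The second missing step is how the coefficients $F_{j}$ and $\mathbb{E}_{j}$ actually enter. In the paper one first sums the decomposition identity against $X^{k_{r}}$ over $k_{r}\ge 2$ to get a formal power series relation (equation \eqref{eq: fps}), and then multiplies both sides by $\sum_{n\ge 0} F_{n}(-i\pi X/2)^{n}/n!$; the left-hand side of Theorem \ref{th: =F} is then read off from the elementary generating-function facts $\sum_n F_{n}(-i\pi X/2)^{n}/n! = 1+i\sum_{n\,\mathrm{odd}}\mathbb{E}_{n}(\pi X/2)^{n}/n!$ and $e^{-i\pi X/2}\sum_n F_{n}(-i\pi X/2)^{n}/n! = \sum_{n\,\mathrm{even}}\mathbb{E}_{n}(\pi X/2)^{n}/n!$, which also explain the $\delta_{k_{r}:\mathrm{even}}$ boundary term as the top coefficient of the Cauchy product. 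Your ``reflection pairs $C$ with $D$ and $E$ with $F$'' remark is the right heuristic for why these four sequences appear, but it is not a substitute for this convolution step. Note also that the shuffle product is used in the paper only \emph{after} Theorem \ref{th: =F} is proved, in Lemma \ref{lem;calT=CT}, to unpack $\mathcal{T}$; it is not the engine of the theorem itself.
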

Theorem \ref{th: = C} and Theorem \ref{th: = D} yield the following corollary.
\begin{corollary} \label{col: =G} For $k_{r} \ge 3$, we have
\begin{align*}
& i^{r+1} \frac{\pi}{2} \tilde{T}(\mathbf{k}^{(1)}) - i \frac{\pi}{2} {T}(\mathbf{k}^{(1)}) + i^{r}\frac{\mathbb{G}_{k_{r}-1}}{(k_{r}-1)!} \left(\frac{\pi}{2}\right)^{k_{r}-1} \tilde{T}(\mathbf{k}^{(k_{r}-1)}) \\
& + \sum_{\substack{j=2\\j:{\rm even}}}^{k_{r}-2}\frac{\mathbb{G}_{j}}{j!}\left(\frac{\pi}{2}\right)^{j} {T}(\mathbf{k}^{(j)}) + i^{r} \sum_{\substack{j=2\\j:{\rm even}}}^{k_{r}-1}\frac{\mathbb{G}_{j}}{j!}\left(\frac{\pi}{2}\right)^{j} \tilde{T}(\mathbf{k}^{(j)})  \\
& = i^{r-|\mathbf{k}|} \mathcal{T}({G}_{n};\mathbf{k};0) + \sum_{j=1}^{r-1}\sum_{h=0}^{k_{j}-1} i^{r-|\mathbf{k}^{j}|-h} \tilde{T}((\mathbf{k}_{j})^{(h)}) \mathcal{T}({G}_{n};\mathbf{k}^{j};h).
\end{align*}
\end{corollary}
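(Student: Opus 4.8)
The plan is to derive the corollary purely by subtracting Theorem \ref{th: = D} from Theorem \ref{th: = C} term by term, exploiting the defining identity $G_n = C_n - D_n$ together with $\mathbb{G}_n = (-1)^{\lfloor n/2\rfloor}G_n$. Since the corollary assumes $k_r \ge 3$, while Theorem \ref{th: = C} holds for $k_r \ge 2$ and Theorem \ref{th: = D} for $k_r \ge 3$, both hypotheses are simultaneously available and we may form the difference of the two identities without restriction; this is precisely why the hypothesis $k_r \ge 3$ appears.

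First I would match the leading terms. The first terms of the two theorems combine directly into $i^{r+1}\frac{\pi}{2}\tilde{T}(\mathbf{k}^{(1)}) - i\frac{\pi}{2}T(\mathbf{k}^{(1)})$, which is exactly the opening of the corollary. For the second term, subtraction produces the coefficient $(-1)^{(k_r-1)/2}(C_{k_r-1}-D_{k_r-1}) = (-1)^{(k_r-1)/2}G_{k_r-1}$, and I would argue that this equals $\mathbb{G}_{k_r-1}$ in both parities: when $k_r$ is odd the exponent is a genuine sign with $(-1)^{(k_r-1)/2}=(-1)^{\lfloor(k_r-1)/2\rfloor}$, while when $k_r \ge 4$ is even both $C_{k_r-1}$ and $D_{k_r-1}$ vanish (since $B_{k_r-1}=0$ for odd index $\ge 3$), matching $\mathbb{G}_{k_r-1}=0$.

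Next I would treat the two finite sums. In the $T$-sum the subtraction replaces $(-1)^{j/2}C_j$ and $(-1)^{j/2}D_j$ by $(-1)^{j/2}(C_j-D_j)=(-1)^{j/2}G_j=\mathbb{G}_j$ for even $j$; in the $\tilde{T}$-sum the two theorems carry these sequences in opposite roles, so after the common factor $-i^r$ the difference collapses to $+i^r(-1)^{j/2}(C_j-D_j)=+i^r\mathbb{G}_j$, matching the sign in the statement. In both sums the $j=0$ contribution cancels because $G_0=0$ (equivalently $\mathbb{G}_0=0$), so the summation ranges legitimately start at $j=2$, exactly as written. On the right-hand sides I would invoke the linearity of $\mathcal{T}(a_n;\mathbf{k};h)$ in its sequence argument, immediate from Definition \ref{def;calT}, to write $\mathcal{T}(C_n;\cdots)-\mathcal{T}(D_n;\cdots)=\mathcal{T}(C_n-D_n;\cdots)=\mathcal{T}(G_n;\cdots)$ at every occurrence, and assemble the claimed right-hand side.

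The argument is essentially mechanical once the subtraction is organized, so I do not expect a genuine obstacle. The only points demanding care are the sign-and-parity bookkeeping in the second term, where one must confirm that $(-1)^{(k_r-1)/2}G_{k_r-1}$ collapses uniformly to $\mathbb{G}_{k_r-1}$, and the verification that the spurious $j=0$ terms in both finite sums cancel so that the ranges match those in the statement.
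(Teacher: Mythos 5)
Your proposal is correct and is exactly the route the paper intends: the paper offers no separate proof of Corollary \ref{col: =G} beyond the remark that Theorems \ref{th: = C} and \ref{th: = D} yield it, i.e.\ precisely the term-by-term subtraction using $G_n=C_n-D_n$, $\mathbb{G}_j=(-1)^{j/2}G_j$ for even $j$, the vanishing of the $j=0$ and (for even $k_r$) of the $C_{k_r-1},D_{k_r-1}$ terms, and the linearity of $\mathcal{T}(a_n;\mathbf{k};h)$ in the sequence. Your sign and parity bookkeeping matches the statement, so there is nothing to add.
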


The proofs of these theorems start from the following theorem.

\begin{theorem}[\cite{U3}] For an admissible index $\mathbf{k}$, we have
\begin{align*}
T(\mathbf{k}) = \sum_{h=0}^{|\mathbf{k}|} A(\mathbf{k}^{(h)};i)\overline{A((\mathbf{k}^{\dag})^{(|\mathbf{k}|-h)};i)}. 
\end{align*}
\end{theorem}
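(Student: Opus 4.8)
The plan is to read the identity off the iterated-integral representation of $A(\mathbf{k};z)$ together with the path-concatenation formula for iterated integrals, splitting the integration path at the interior point $z=i$. Since $\mathbf{k}$ is admissible we have $k_{r}\ge 2$, so in the word $\Omega_{1}\circ\cdots\circ\Omega_{|\mathbf{k}|}$ the outermost letter is $\Omega_{|\mathbf{k}|}=dt/t$ and the innermost is $\Omega_{1}=2dt/(1-t^{2})$; hence $T(\mathbf{k})=A(\mathbf{k};1)=\int_{0}^{1}\Omega_{1}\circ\cdots\circ\Omega_{|\mathbf{k}|}$ is a genuinely convergent integral along a path from $0$ to $1$ (perturbed off the real axis to avoid the poles at $0,\pm1$). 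Because $i$ is not a singular point of any $\Omega_{i}$, I would concatenate at $c=i$ to obtain
\[
\int_{0}^{1}\Omega_{1}\cdots\Omega_{|\mathbf{k}|}=\sum_{j=0}^{|\mathbf{k}|}\left(\int_{0}^{i}\Omega_{1}\cdots\Omega_{j}\right)\left(\int_{i}^{1}\Omega_{j+1}\cdots\Omega_{|\mathbf{k}|}\right),
\]
every summand being convergent for the same endpoint reasons. For the first factor I would use that $\mathbf{k}\mapsto\mathbf{k}^{(1)}$ deletes exactly the last letter $\Omega_{|\mathbf{k}|}$ of the word (a trailing $dt/t$ when $k_{r}>1$, or the final $2dt/(1-t^{2})$ when $k_{r}=1$), so $\mathbf{k}^{(h)}$ deletes the last $h$ letters and $\int_{0}^{i}\Omega_{1}\cdots\Omega_{j}=A(\mathbf{k}^{(|\mathbf{k}|-j)};i)$.

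The heart of the argument is the second factor, where I would apply the involutive M\"obius substitution $s=(1-t)/(1+t)$. A direct computation gives $2dt/(1-t^{2})=-ds/s$ and $dt/t=-2ds/(1-s^{2})$, so each $\Omega_{i}$ pulls back to minus the \emph{dual} form (the one obtained by interchanging the two form-types), while the endpoints move as $t=i\mapsto s=-i$ and $t=1\mapsto s=0$. By reparametrization invariance of iterated integrals this produces a factor $(-1)^{|\mathbf{k}|-j}$ and turns the path into one from $-i$ to $0$; reversing that path produces a second $(-1)^{|\mathbf{k}|-j}$ and reverses the order of the forms. The two signs cancel, leaving
\[
\int_{i}^{1}\Omega_{j+1}\cdots\Omega_{|\mathbf{k}|}=\int_{0}^{-i}\tilde{\Omega}_{|\mathbf{k}|}\cdots\tilde{\Omega}_{j+1},
\]
where $\tilde{\Omega}$ denotes the dual form. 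Since the reverse-and-dualize of the whole word of $\mathbf{k}$ is by definition the word of $\mathbf{k}^{\dag}$, the reverse-and-dualize of its last $|\mathbf{k}|-j$ letters is precisely the first $|\mathbf{k}|-j$ letters of $\mathbf{k}^{\dag}$; keeping those first $|\mathbf{k}|-j$ letters amounts to deleting the last $j$, i.e.\ to passing to $(\mathbf{k}^{\dag})^{(j)}$. Hence the second factor equals $A((\mathbf{k}^{\dag})^{(j)};-i)$, and as the coefficients of $A$ are real, $A(\,\cdot\,;-i)=\overline{A(\,\cdot\,;i)}$. Writing $h=|\mathbf{k}|-j$ converts the sum into the stated form.

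The main obstacle I anticipate is the bookkeeping in the M\"obius step: one must check that the two sign factors (from the form pullback and from the path reversal) cancel, and — more structurally — that the operation ``reverse the word and swap the two form-types'' coincides with the combinatorial definition of $\mathbf{k}\mapsto\mathbf{k}^{\dag}$ recalled above, including its interaction with the truncation operations $(\,\cdot\,)^{(h)}$ and with restriction to the last $|\mathbf{k}|-j$ letters. A secondary technical point is to justify the concatenation and the change of variables for these conditionally-defined integrals; here admissibility (so that the $t=1$ end is governed by $dt/t$) together with the interiority of $i$ makes every integral that appears genuinely convergent, so no regularization is required.
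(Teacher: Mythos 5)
Your proposal is correct: splitting the path $0\to i\to 1$ by the concatenation formula, identifying the first factor with $A(\mathbf{k}^{(|\mathbf{k}|-j)};i)$, and applying the involution $s=(1-t)/(1+t)$ (which sends $2dt/(1-t^{2})\mapsto -ds/s$, $dt/t\mapsto -2ds/(1-s^{2})$, $i\mapsto -i$, $1\mapsto 0$) so that the two signs from pullback and path reversal cancel and the second factor becomes $A((\mathbf{k}^{\dag})^{(j)};-i)=\overline{A((\mathbf{k}^{\dag})^{(j)};i)}$ — all of this checks out, including the identification of reverse-and-dualize with $\mathbf{k}\mapsto\mathbf{k}^{\dag}$ and its compatibility with truncation. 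The present paper gives no proof of this statement (it is imported from \cite{U3}), but your argument is the standard one and matches the method of that reference, so there is nothing to flag beyond the routine convergence and homotopy points you already address.
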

In rewriting the right-hand side, we obtain
\begin{align}\nonumber
T(\mathbf{k}) &= \sum_{h=0}^{k_{r}-1} A(\mathbf{k}^{(h)};i)\overline{A(\{1\}^{h};i)}\\ \nonumber
&\quad + \overline{A(\mathbf{k}^{\dag};i)} + \sum_{j=1}^{r-1}\sum_{h=0}^{k_{j}-1} A((\mathbf{k}_{j})^{(h)};i)\overline{A((\mathbf{k}^{j})^{\dag},\{1\}^{h};i)}\\ \label{eq:T=}
&=  i^{r}\sum_{h=0}^{k_{r}-1}\tilde{T}(\mathbf{k}^{(h)}) \frac{(-i \pi/2)^{h}}{h!}\\ \nonumber
&\quad + i^{r-|\mathbf{k}|}\tilde{T}(\mathbf{k}^{\dag}) + \sum_{j=1}^{r-1}\sum_{h=0}^{k_{j}-1} i^{r-|\mathbf{k}^{j}|-h} \tilde{T}((\mathbf{k}_{j})^{(h)})\tilde{T}((\mathbf{k}^{j})^{\dag},\{1\}^{h})
\end{align}
By multiplying this equation by $X^{k_{r}}$ and summing over $k_{r}\ge2$, we obtain the following formal power series relation:
\begin{align}
\label{eq: fps}
\sum_{k_{r}=2}^{\infty}T(\mathbf{k})X^{k_{r}} &= - i^{r}\tilde{T}(\mathbf{k}^{(k_{r}-1)})X + i^{r}e^{-i(\pi/2)X}\sum_{k_{r}=1}^{\infty} \tilde{T}(\mathbf{k}) X^{k_{r}} \\ \nonumber
&\quad + i^{r} \sum_{j=1}^{r-1}\sum_{h=0}^{k_{j}-1} \tilde{T}((\mathbf{k}_{j})^{(h)}) \sum_{k_{r}=2}^{\infty} i^{-|\mathbf{k}^{j}|-h}\tilde{T}((\mathbf{k}^{j})^{\dag},\{1\}^{h})X^{k_{r}}\\ \nonumber
&\quad + i^{r} \sum_{k_{r}=2}^{\infty} i^{-|\mathbf{k}|}\tilde{T}(\mathbf{k}^{\dag}) X^{k_{r}}. 
\end{align}
Here, the calculation of the first member of the right-hand side of \eqref{eq:T=} is performed as follows:
\begin{align*}
&\sum_{k_{r}=2}^{\infty} \sum_{h=0}^{k_{r}-1} \tilde{T}(\mathbf{k}^{(h)}) \frac{(-i \pi/2)^{h}}{h!}X^{k_{r}}\\
&=\sum_{k_{r}=2}^{\infty} \sum_{h=0}^{k_{r}-1} \tilde{T}(k_{1},\dots,k_{r-1},k_{r}-h) \frac{(-i \pi/2)^{h}}{h!}X^{k_{r}}\\
&=\sum_{k_{r}=1}^{\infty} \sum_{h=0}^{k_{r}-1} \tilde{T}(k_{1},\dots,k_{r-1},k_{r}-h) \frac{(-i \pi/2)^{h}}{h!}X^{k_{r}}\\
&\quad - \tilde{T}(k_{1},\dots,k_{r-1},1) X\\
&=- \tilde{T}(\mathbf{k}^{(k_{r}-1)})X + e^{-i(\pi/2)X} \sum_{k_{r}=1}^{\infty} \tilde{T}(\mathbf{k})X^{k_{r}}.
\end{align*}
Theorems \ref{th: = C}, \ref{th: = D}, \ref{th: =E} and \ref{th: =F} are obtained by multiplying the both sides of equation \eqref{eq: fps} by $\sum_{n=0}^{\infty} a_{n}\left(-i(\pi/2) X\right)^{n}/n!$ and comparing coefficients. First, note that $\mathcal{T}(a_{n};\mathbf{k};h)$ satisfies
\begin{align*}
&\sum_{n=0}^{\infty}\frac{a_{n}}{n!}\left(-\frac{\pi}{2}i X\right)^{n}\cdot \sum_{k_{r}=2}^{\infty}i^{-|\mathbf{k}|-h}\tilde{T}(\mathbf{k}^{\dag},\{1\}^{h}) X^{k_{r}} = \sum_{k_{r}=2}^{\infty} i^{-|\mathbf{k}|-h} \mathcal{T}(a_{n};\mathbf{k};h) X^{k_{r}}.
\end{align*}

To obtain Thorem \ref{th: = C}, we multiply the both sides of equation \eqref{eq: fps} by
\begin{align*}
\sum_{n=0}^{\infty} \frac{{C}_{n}}{n!}\left(-\frac{\pi}{2}i X\right)^{n} = \sum_{\substack{n=0\\ n:{\rm even}}}^{\infty} \frac{(-1)^{n/2}{C}_{n}}{n!}\left(\frac{\pi}{2} X\right)^{n}.
\end{align*}
Then, noting the equation
\begin{align*}
e^{-i(\pi/2) X}\sum_{n=0}^{\infty} \frac{{C}_{n}}{n!}\left(-\frac{\pi}{2}i X\right)^{n} &= \sum_{n=0}^{\infty} \frac{{D}_{n}}{n!}\left(\frac{\pi}{2} iX\right)^{n} \\
&= -\frac{\pi}{2} i X + \sum_{\substack{n=0\\ n:{\rm even}}}^{\infty} \frac{(-1)^{n/2}{D}_{n}}{n!}\left(\frac{\pi}{2} X\right)^{n},
\end{align*}
we can obtain Theorem \ref{th: = C} by comparing the coefficients of $X^{k_{r}}$ with $k_{r} \ge 2$. In the same way, Theorem \ref{th: = D} can be obtained by multiplying equation \eqref{eq: fps} by 
\begin{align*}
\sum_{n=0}^{\infty} \frac{{D}_{n}}{n!}\left(-\frac{\pi}{2}i X\right)^{n} &= \frac{\pi}{2} i X  +\sum_{\substack{n=0\\ n:{\rm even}}}^{\infty} \frac{(-1)^{n/2}{D}_{n}}{ n!}\left(\frac{\pi}{2} X\right)^{n}
\end{align*}
and noting
\begin{align*}
e^{-i(\pi/2) X}\sum_{n=0}^{\infty} \frac{{D}_{n}}{n!}\left(-\frac{\pi}{2}i X\right)^{n} &= \sum_{n=0}^{\infty} \frac{{C}_{n}}{n!}\left(\frac{\pi}{2}i X\right)^{n}= \sum_{\substack{n=0\\ n:{\rm even}}}^{\infty} \frac{(-1)^{n/2}{C}_{n}}{n!}\left(\frac{\pi}{2} X\right)^{n}.
\end{align*}
Theorem \ref{th: =E} can be obtained by multiplying equation \eqref{eq: fps} by
\begin{align*}
\sum_{n=0}^{\infty} \frac{E_{n}}{n!}\left(-\frac{\pi}{2}i X\right)^{n} = \sum_{\substack{n=0\\n:{\rm even}}}^{\infty} \frac{\mathbb{E}_{n}}{n!}\left(\frac{\pi}{2} X\right)^{n}
\end{align*}
and  noting
\begin{align*}
e^{-i(\pi/2) X}\sum_{n=0}^{\infty} \frac{E_{n}}{n!}\left(-\frac{\pi}{2}i X\right)^{n} &= \sum_{n=0}^{\infty} \frac{{F}_{n}}{n!}\left(\frac{\pi}{2}i X\right)^{n} = 1-i\sum_{\substack{n=0\\ n:{\rm odd}}}^{\infty}\frac{\mathbb{E}_{n}}{n!} \left(\frac{\pi}{2}X\right)^{n}.
\end{align*}
Theorem \ref{th: =F} can be obtained by multiplying equation \eqref{eq: fps} by 
\begin{align*}
\sum_{n=0}^{\infty} \frac{F_{n}}{n!}\left(-\frac{\pi}{2}i X\right)^{n} = 1 + i \sum_{\substack{n=0\\ n:{\rm odd}}}^{\infty} \frac{\mathbb{E}_{n}}{n!}\left(\frac{\pi}{2} X\right)^{n}
\end{align*}
and noting
\begin{align*}
e^{-i(\pi/2) X}\sum_{n=0}^{\infty} \frac{F_{n}}{n!}\left(-\frac{\pi}{2}i X\right)^{n} = \sum_{n=0}^{\infty} \frac{E_{n}}{n!}\left(\frac{\pi}{2}i X\right)^{n} =  \sum_{\substack{n=0\\ n:{\rm even}}}^{\infty} \frac{\mathbb{E}_{n}}{n!}\left(\frac{\pi}{2} X\right)^{n}.
\end{align*}

\subsection{Lemmas}
We will give two lemmas in order to specialize Theorem \ref{th: = C}, \ref{th: = D}, \ref{th: =E} and \ref{th: =F}.
By the shuffle product formula, $\mathcal{T}(a_{n};\mathbf{k};h)$ can also be written as follows.
\begin{lemma} \label{lem;calT=CT}We have
\begin{align*}
&\mathcal{T}(a_{n};\mathbf{k};h) \\
&= \sum_{\substack{m_{0}+|\mathbf{m}| = k_{r}-1 \\ m_{r} > 0}} \binom{h+m_{0}}{m_{0}}  \left(\prod_{u=1}^{r-1}\binom{k_{u}+m_{u}-1}{m_{u}}\right) \left(\sum_{n=1}^{m_{r}} \binom{m_{r}}{n} a_{k_{r}-1-n} \right) \\
&\quad \times \tilde{T}((\mathbf{k}^{(k_{r}-1)} + \mathbf{m})^{\dag},\{1\}^{h+m_{0}}),
\end{align*}
where the sum is over all $m_{0} \in \mathbb{Z}_{\ge 0}$ and $\mathbf{m} = (m_{1}, \dots, m_{r}) \in \mathbb{Z}_{\ge 0}^{r}$ satisfying $m_{0}+|\mathbf{m}| = k_{r}-1$ and $m_{r} > 0$.
\end{lemma}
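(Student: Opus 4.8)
The plan is to recognize the prefactor $(\pi/2)^n/n! = \tilde{T}(\{1\}^n)$ and then expand each summand of $\mathcal{T}(a_n;\mathbf{k};h)$ by the shuffle product. Because the multiple $A$-functions satisfy the shuffle product and a shuffle preserves the depth (the number of forms $\frac{2dt}{1-t^2}$), the factors $i^{\mathrm{depth}}$ relating $A(\cdot;i)$ to $\tilde{T}$ cancel on both sides, so $\tilde{T}$ itself obeys the shuffle product; convergence is not an issue since the leading form $\Omega_1$ is always $\frac{2dt}{1-t^2}$. Thus I would first write
\[
\mathcal{T}(a_n;\mathbf{k};h)=\sum_{n=0}^{k_r-2} a_n\,\tilde{T}(\{1\}^n)\,\tilde{T}\bigl((\mathbf{k}^{(n)})^{\dag},\{1\}^h\bigr),
\]
and expand $\tilde{T}(\{1\}^n)\,\tilde{T}((\mathbf{k}^{(n)})^{\dag},\{1\}^h)$ as a sum over shuffles of the word $y^n$ into the word of $((\mathbf{k}^{(n)})^{\dag},\{1\}^h)$.

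The next step is to read off the word structure. Writing an index as a word in the two forms $y=\frac{2dt}{1-t^2}$ and $x=\frac{dt}{t}$ and recalling that duality is reversal-and-swap, the word of $(\mathbf{k}^{(n)})^{\dag}$ is
\[
y^{\,k_r-n-1}\,x\,y^{\,k_{r-1}-1}\,x\cdots y^{\,k_1-1}\,x,
\]
so that $((\mathbf{k}^{(n)})^{\dag},\{1\}^h)$ is a concatenation of $y$-blocks separated by the $r$ letters $x$, followed by a trailing block $y^h$. Crucially, lowering $k_r$ by $n$ only shortens the leading $y$-block (from $y^{k_r-1}$ to $y^{k_r-n-1}$), which stays nonempty since $n\le k_r-2$. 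Shuffling in the $n$ extra $y$'s of $\{1\}^n$ then amounts to distributing them, say $c_0,\dots,c_r$ of them with $\sum_i c_i=n$, among the $r+1$ regions delimited by the $x$'s; the number of shuffles producing a fixed result word is the product $\prod_{i=0}^{r}\binom{b_i+c_i}{c_i}$ of the independent intra-block counts, where $b_i$ is the original size of region $i$ (so $b_0=k_r-n-1$, $b_i=k_{r-i}-1$ for $1\le i\le r-1$, and $b_r=h$).

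I would then match the resulting words against the right-hand side. Computing the word of $((\mathbf{k}^{(k_r-1)}+\mathbf{m})^{\dag},\{1\}^{h+m_0})$ the same way and comparing region sizes gives the dictionary $c_r=m_0$, $c_i=m_{r-i}$ for $1\le i\le r-1$, and $c_0=m_r-(k_r-1-n)$ for the leading region. Under this dictionary the constraint $\sum_i c_i=n$ becomes exactly $m_0+|\mathbf{m}|=k_r-1$, the nonemptiness of the leading block forces $m_r>0$, and the multiplicity factors as
\[
\binom{m_r}{\,k_r-1-n\,}\binom{h+m_0}{m_0}\prod_{u=1}^{r-1}\binom{k_u+m_u-1}{m_u}.
\]
Finally, summing over $n$ with the weights $a_n$ and grouping by $(m_0,\mathbf{m})$, the only $n$-dependence is through $\binom{m_r}{k_r-1-n}$; reindexing by $\nu=k_r-1-n$ collapses the inner sum to $\sum_{\nu=1}^{m_r}\binom{m_r}{\nu}a_{k_r-1-\nu}$, which is precisely the factor in the statement.

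I expect the main obstacle to be the careful bookkeeping of this word-level correspondence: verifying that reversal-and-swap produces exactly the stated $y$-block pattern, that reducing the last entry of $\mathbf{k}$ acts only on the leading block, and that the shuffle multiplicities split as the claimed product of binomials with all indices aligned (in particular that the leading region never empties, which is what guarantees $m_r>0$). Once this dictionary is pinned down, the final summation identity is a routine reindexing.
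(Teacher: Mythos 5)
Your proposal is correct and takes essentially the same route as the paper: both start from $\tilde{T}(\{1\}^{n})=(\pi/2)^{n}/n!$, expand the product $\tilde{T}(\{1\}^{n})\,\tilde{T}((\mathbf{k}^{(n)})^{\dag},\{1\}^{h})$ by the shuffle product into a sum over compositions with the same product of binomial multiplicities, and then reindex so that the $n$-sum collapses to $\sum_{\nu=1}^{m_{r}}\binom{m_{r}}{\nu}a_{k_{r}-1-\nu}$. The only difference is presentational: you derive the shuffle multiplicities explicitly at the word level, while the paper invokes the shuffle product formula directly and performs the substitution $m_{r}+n-1\mapsto m_{r}$.
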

\begin{proof}
By the formula $\tilde{T}(\{1\}^{n}) = (\pi/2)^{n}/n!$ and the shuffle product formula, we have
\begin{align*}
&\mathcal{T}(a_{n};\mathbf{k};h) \\
&= \sum_{n=0}^{k_{r}-2} a_{n} \tilde{T}(\{1\}^{n}) \tilde{T}((\mathbf{k}^{(n)})^{\dag},\{1\}^{h})\\
&= \sum_{n=2}^{k_{r}} a_{k_{r}-n} \tilde{T}(\{1\}^{k_{r}-n}) \tilde{T}((\mathbf{k}^{(k_{r}-n)})^{\dag},\{1\}^{h})\\
&= \sum_{n=2}^{k_{r}} a_{k_{r}-n} \sum_{\substack{m_{0}+|\mathbf{m}| = k_{r}-n}} \binom{h+m_{0}}{m_{0}} \left(\prod_{u=1}^{r-1}\binom{k_{u}+m_{u}-1}{m_{u}}\right) \binom{n+m_{r}-1}{m_{r}} \\
&\quad \times \tilde{T}((\mathbf{k}^{(k_{r}-n)} + \mathbf{m})^{\dag},\{1\}^{h+m_{0}})\\
&= \sum_{\substack{m_{0}+|\mathbf{m}| = k_{r}-1 \\ m_{r} > 0}} \sum_{n=2}^{m_{r}+1} a_{k_{r}-n} \binom{h+m_{0}}{m_{0}} \left(\prod_{u=1}^{r-1}\binom{k_{u}+m_{u}-1}{m_{u}}\right) \binom{m_{r}}{n  - 1} \\
&\quad \times \tilde{T}((\mathbf{k}^{(k_{r}-1)} + \mathbf{m})^{\dag},\{1\}^{h+m_{0}})\\
&= \sum_{\substack{m_{0}+|\mathbf{m}| = k_{r}-1 \\ m_{r} > 0}} \binom{h+m_{0}}{m_{0}}  \left(\prod_{u=1}^{r-1}\binom{k_{u}+m_{u}-1}{m_{u}}\right) \left(\sum_{n=1}^{m_{r}} \binom{m_{r}}{n} a_{k_{r}-1-n} \right) \\
&\quad \times \tilde{T}((\mathbf{k}^{(k_{r}-1)} + \mathbf{m})^{\dag},\{1\}^{h+m_{0}}).
\end{align*}
The second equality from the last is obtained by putting $m_{r} + n - 1$ as $m_{r}$.
\end{proof}
The following lemma is also useful in later calculations.
\begin{lemma} \label{lem; TtcalT}
For a admissible $\mathbf{k} = (k_{1}, \dots, k_{r})$ with $r \ge 2$, we have
\begin{align*}
&\begin{dcases}
\Re\left(\sum_{h=0}^{k_{1}-1} i^{r-|\mathbf{k}^{1}|-h} \tilde{T}((\mathbf{k}_{1})^{(h)}) \mathcal{T}(a_{n};\mathbf{k}^{1};h) \right) & (|\mathbf{k}|-r : \textup{odd}),\\
i\Im\left(\sum_{h=0}^{k_{1}-1} i^{r-|\mathbf{k}^{1}|-h} \tilde{T}((\mathbf{k}_{1})^{(h)}) \mathcal{T}(a_{n};\mathbf{k}^{1};h) \right) & (|\mathbf{k}|-r : \textup{even})
\end{dcases}\\
&=i^{r-|\mathbf{k}|+1} \sum_{m=0}^{k_{1}-1} \sum_{n=0}^{k_{r}-2} \frac{E_{m}}{m!} \frac{a_{n}}{n!}\left(\frac{\pi}{2}\right)^{m+n+1}\tilde{T}(((\mathbf{k}^{1})^{(n)})^{\dag},\{1\}^{k_{1}-1-m})\\
&=i^{r-|\mathbf{k}|+1} \frac{\pi}{2} \sum_{\substack{|\mathbf{l}| = k_{1}-1}}  \sum_{\substack{|\mathbf{m}| = k_{r}-1 \\ m_{r} > 0}} \left( \sum_{h=0}^{l_{1}} \binom{l_{1}}{h} {E}_{k_{1}-1-h} \right) \left(\sum_{n=1}^{m_{r}} \binom{m_{r}}{n} a_{k_{r}-1-n} \right) \\
&\quad \times \binom{l_{1}+m_{1}}{l_{1}} \left(\prod_{u=2}^{r-1}\binom{k_{u}-1+l_{u}+m_{u}}{k_{u}-1,l_{u},m_{u}}\right)  \binom{l_{r}+m_{r}}{l_{r}} \\
&\quad \times \tilde{T}(((\mathbf{k}^{1})^{(k_{r}-1)} + \mathbf{l}^{1} + \mathbf{m}^{1})^{\dag},\{1\}^{l_{1} + m_{1}}).
\end{align*}
\end{lemma}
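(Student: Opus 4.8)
The plan is to prove the two displayed equalities separately: the first by a parity/projection argument that reduces everything to the depth-one evaluation of $\tilde{T}$, and the second by two successive applications of the shuffle-product expansion.

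For the first equality, I would reindex the sum by $j = k_{1}-h$, so that $(\mathbf{k}_{1})^{(h)} = (k_{1}-h) = (j)$ runs over $j = 1, \dots, k_{1}$ and the exponent becomes $r - |\mathbf{k}^{1}| - h = r - |\mathbf{k}| + j$. Factoring out $i^{r-|\mathbf{k}|}$ leaves $\sum_{j=1}^{k_{1}} i^{j}\,\tilde{T}(j)\,\mathcal{T}(a_{n};\mathbf{k}^{1};k_{1}-j)$, in which every factor $\mathcal{T}(a_{n};\mathbf{k}^{1};k_{1}-j)$ is a real number (a real combination of real $\tilde{T}$-values and powers of $\pi/2$). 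The key input is the depth-one evaluation: combining \eqref{eq:TorTt=E} with $\mathbb{E}_{j-1} = i^{j-1}E_{j-1}$ gives, for odd $j$, the clean identity $i^{j}\tilde{T}(j) = i\,E_{j-1}(\pi/2)^{j}/(j-1)!$, which is purely imaginary, whereas for even $j$ the quantity $i^{j}\tilde{T}(j)$ is real and $E_{j-1}=0$. Thus the odd-$j$ terms carry the factor $i^{r-|\mathbf{k}|+1}$ and the even-$j$ terms the factor $i^{r-|\mathbf{k}|}$; taking $\Re$ when $|\mathbf{k}|-r$ is odd and $i\,\Im$ when $|\mathbf{k}|-r$ is even selects exactly the odd-$j$ contribution in both cases. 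Relabeling $m=j-1$ (and noting $E_{m}=0$ for odd $m$, so the sum extends freely to all $0 \le m \le k_{1}-1$) and expanding $\mathcal{T}(a_{n};\mathbf{k}^{1};k_{1}-1-m)$ by its definition yields the middle expression verbatim.

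For the second equality, I would read the middle expression as $i^{r-|\mathbf{k}|+1}\tfrac{\pi}{2}\sum_{m=0}^{k_{1}-1} E_{m}\,\tilde{T}(\{1\}^{m})\,\mathcal{T}(a_{n};\mathbf{k}^{1};k_{1}-1-m)$, using $(\pi/2)^{m}/m! = \tilde{T}(\{1\}^{m})$. Applying Lemma \ref{lem;calT=CT} to $\mathcal{T}(a_{n};\mathbf{k}^{1};k_{1}-1-m)$ performs the expansion on the last index: it introduces the vector $\mathbf{m}$, the factor $\sum_{n=1}^{m_{r}}\binom{m_{r}}{n}a_{k_{r}-1-n}$, binomials of the form $\binom{(k_{1}-1-m)+m_{0}}{m_{0}}\prod_{u}\binom{\cdots}{\cdots}$, and a term $\tilde{T}(\mathbf{J}^{\dag},\{1\}^{(k_{1}-1-m)+m_{0}})$ for the appropriate partially-augmented core $\mathbf{J}$ produced by the lemma. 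It remains to expand $E_{m}\,\tilde{T}(\{1\}^{m})\cdot\tilde{T}(\mathbf{J}^{\dag},\{1\}^{(k_{1}-1-m)+m_{0}})$ by the shuffle product formula exactly as in the proof of Lemma \ref{lem;calT=CT}; this second (Euler-number) expansion produces the vector $\mathbf{l}$, the factor $\sum_{h=0}^{l_{1}}\binom{l_{1}}{h}E_{k_{1}-1-h}$, and the remaining weights. Merging the two streams of binomials---those from the $a$-expansion and those from the $E$-expansion---collapses the products into the single multinomials $\binom{k_{u}-1+l_{u}+m_{u}}{k_{u}-1,l_{u},m_{u}}$ for the interior slots and $\binom{l_{1}+m_{1}}{l_{1}}$, $\binom{l_{r}+m_{r}}{l_{r}}$ for the two end slots, while the trailing ones combine into $\{1\}^{l_{1}+m_{1}}$ and the augmentations land on $((\mathbf{k}^{1})^{(k_{r}-1)}+\mathbf{l}^{1}+\mathbf{m}^{1})^{\dag}$.

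The short first equality is essentially bookkeeping once the depth-one identity is in hand; the real work, and the main obstacle, is the second equality---specifically tracking how the two successive shuffle expansions interact with the duality operator $\dag$ and verifying that the augmentation vectors $\mathbf{l}$ and $\mathbf{m}$ attach to the correct components of the core index, so that the products of binomials reassemble into the stated multinomials. Since every coefficient ($E_{m}$, $a_{n}$, and all binomials) is carried along formally and no cancellation occurs, no analytic input beyond the shuffle product formula and \eqref{eq:TorTt=E} is required.
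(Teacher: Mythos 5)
Your proposal is correct and follows essentially the same route as the paper: the first equality via the depth-one evaluation \eqref{eq:TorTt=E} and the observation that the odd and even values of $k_{1}-h$ contribute to opposite parities of powers of $i$ (so that $\Re$ or $i\,\Im$ isolates exactly the terms with $E_{k_{1}-1-h}\neq 0$), and the second equality by expanding $\mathcal{T}(a_{n};\mathbf{k}^{1};h)$ with Lemma \ref{lem;calT=CT} and then shuffling in $\tilde{T}(\{1\}^{k_{1}-1-h})$, merging the two streams of binomials via $\binom{h+m_{0}}{m_{0}}\binom{l_{0}+m_{0}}{l_{0}-h}=\binom{l_{0}}{h}\binom{l_{0}+m_{0}}{l_{0}}$ into the stated multinomials. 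The only difference is cosmetic: you argue from the left-hand side toward the middle expression, whereas the paper computes from the middle expression back to the left-hand side.
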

\begin{proof} 
The first equality is obtained by \eqref{eq:TorTt=E} as follows:
\begin{align*}
&i^{r-|\mathbf{k}|+1} \sum_{m=0}^{k_{1}-1} \sum_{n=0}^{k_{r}-2} \frac{E_{m}}{m!} \frac{a_{n}}{n!}\left(\frac{\pi}{2}\right)^{m+n+1}\tilde{T}(((\mathbf{k}^{1})^{(n)})^{\dag},\{1\}^{k_{1}-1-m})\\
&=i^{r-|\mathbf{k}|+1} \sum_{\substack{m=0 \\ m:{\rm even}}}^{k_{1}-1} \frac{E_{m}}{m!} \left(\frac{\pi}{2}\right)^{m+1}\sum_{n=0}^{k_{r}-2} \frac{a_{n}}{n!} \left(\frac{\pi}{2}\right)^{n}\tilde{T}(((\mathbf{k}^{1})^{(n)})^{\dag},\{1\}^{k_{1}-1-m})\\
&=i^{r-|\mathbf{k}|+1} \sum_{\substack{m=0 \\ m:{\rm even}}}^{k_{1}-1} i^{m} \tilde{T}(m+1) \mathcal{T}(a_{n};\mathbf{k}^{1};k_{1}-1-m)\\
&=i^{r-|\mathbf{k}|+1} \sum_{\substack{h=0 \\ k_{1}-1-h:{\rm even}}}^{k_{1}-1} i^{k_{1}-1-h} \tilde{T}(k_{1}-h) \mathcal{T}(a_{n};\mathbf{k}^{1};h)\\
&=
\begin{dcases}
\Re\left(\sum_{h=0}^{k_{1}-1} i^{r-|\mathbf{k}^{1}|-h} \tilde{T}((\mathbf{k}_{1})^{(h)}) \mathcal{T}(a_{n};\mathbf{k}^{1};h) \right) & (|\mathbf{k}|-r : \textup{odd}),\\
i\Im\left(\sum_{h=0}^{k_{1}-1} i^{r-|\mathbf{k}^{1}|-h} \tilde{T}((\mathbf{k}_{1})^{(h)}) \mathcal{T}(a_{n};\mathbf{k}^{1};h) \right) & (|\mathbf{k}|-r : \textup{even}).
\end{dcases}
\end{align*}
The second equality in the lemma is obtained by the shuffle product formula as follows:
\begin{align*}
&i^{r-|\mathbf{k}|+1} \sum_{\substack{h=0 \\ k_{1}-1-h:{\rm even}}}^{k_{1}-1} i^{k_{1}-1-h} \tilde{T}(k_{1}-h) \mathcal{T}(a_{n};\mathbf{k}^{1};h)\\
&=i^{r-|\mathbf{k}|+1} \frac{\pi}{2}\sum_{\substack{h=0 \\ k_{1}-1-h:{\rm even}}}^{k_{1}-1} {E}_{k_{1}-1-h} \tilde{T}(\{1\}^{k_{1}-1-h}) \mathcal{T}(a_{n};\mathbf{k}^{1};h)\\
&=i^{r-|\mathbf{k}|+1} \frac{\pi}{2}\sum_{\substack{h=0 \\ k_{1}-1-h:{\rm even}}}^{k_{1}-1} \sum_{\substack{m_{0}+|\mathbf{m}^{1}| = k_{r}-1 \\ m_{r} > 0}} {E}_{k_{1}-1-h}\\
&\quad \times \binom{h+m_{0}}{m_{0}} \left(\prod_{u=2}^{r-1}\binom{k_{u}+m_{u}-1}{m_{u}}\right)  \left(\sum_{n=1}^{m_{r}} \binom{m_{r}}{n} a_{k_{r}-1-n} \right)\\
&\quad \times\tilde{T}(\{1\}^{k_{1}-1-h}) \tilde{T}(((\mathbf{k}^{1})^{(k_{r}-1)} + \mathbf{m}^{1})^{\dag},\{1\}^{h+m_{0}})\\
&=i^{r-|\mathbf{k}|+1} \frac{\pi}{2}\sum_{\substack{h=0 \\ k_{1}-1-h:{\rm even}}}^{k_{1}-1} \sum_{\substack{m_{0}+|\mathbf{m}^{1}| = k_{r}-1 \\ m_{r} > 0}} {E}_{k_{1}-1-h}\\
&\quad \times \binom{h+m_{0}}{m_{0}} \left(\prod_{u=2}^{r-1}\binom{k_{u}+m_{u}-1}{m_{u}}\right)  \left(\sum_{n=1}^{m_{r}} \binom{m_{r}}{n} a_{k_{r}-1-n} \right)\\
&\quad \times \sum_{\substack{l_{0}+|\mathbf{l}^{1}| = k_{1}-1-h}} \binom{h+l_{0} +m_{0}}{l_{0}} \left(\prod_{u=2}^{r-1}\binom{k_{u}+l_{u}+m_{u}-1}{l_{u}}\right) \binom{l_{r}+m_{r}}{l_{r}} \\
&\quad \times \tilde{T}(((\mathbf{k}^{1})^{(k_{r}-1)} + \mathbf{l}^{1} + \mathbf{m}^{1})^{\dag},\{1\}^{h+l_{0} +m_{0}})\\
&=i^{r-|\mathbf{k}|+1} \frac{\pi}{2} \sum_{\substack{l_{0}+|\mathbf{l}^{1}| = k_{1}-1}}  \sum_{\substack{m_{0}+|\mathbf{m}^{1}| = k_{r}-1 \\ m_{r} > 0}} \sum_{\substack{h=0 \\ k_{1}-1-h:{\rm even}}}^{l_{0}}  {E}_{k_{1}-1-h}\\
&\quad \times \binom{h+m_{0}}{m_{0}} \left(\prod_{u=2}^{r-1}\binom{k_{u}+m_{u}-1}{m_{u}}\right)  \left(\sum_{n=1}^{m_{r}} \binom{m_{r}}{n} a_{k_{r}-1-n} \right)\\
&\quad \times \binom{l_{0} +m_{0}}{l_{0}-h} \left(\prod_{u=2}^{r-1}\binom{k_{u}+l_{u}+m_{u}-1}{l_{u}}\right) \binom{l_{r}+m_{r}}{l_{r}} \\
&\quad \times \tilde{T}(((\mathbf{k}^{1})^{(k_{r}-1)} + \mathbf{l}^{1} + \mathbf{m}^{1})^{\dag},\{1\}^{l_{0} +m_{0}})\\
&=i^{r-|\mathbf{k}|+1} \frac{\pi}{2} \sum_{\substack{l_{0}+|\mathbf{l}^{1}| = k_{1}-1}}  \sum_{\substack{m_{0}+|\mathbf{m}^{1}| = k_{r}-1 \\ m_{r} > 0}} \left( \sum_{h=0}^{l_{0}} \binom{l_{0}}{h} {E}_{k_{1}-1-h} \right) \\
&\quad \times \binom{l_{0}+m_{0}}{l_{0}} \left(\prod_{u=2}^{r-1}\binom{k_{u}-1+l_{u}+m_{u}}{k_{u}-1,l_{u},m_{u}}\right)  \binom{l_{r}+m_{r}}{l_{r}} \left(\sum_{n=1}^{m_{r}} \binom{m_{r}}{n} a_{k_{r}-1-n} \right) \\
&\quad \times \tilde{T}(((\mathbf{k}^{1})^{(k_{r}-1)} + \mathbf{l}^{1} + \mathbf{m}^{1})^{\dag},\{1\}^{l_{0} + m_{0}}).
\end{align*}
The last equality is obtained by 
\begin{align*}
&\binom{h+m_{0}}{m_{0}} \binom{l_{0}+m_{0}}{l_{0}-h} = \binom{l_{0}}{h}\binom{l_{0}+m_{0}}{l_{0}}
\end{align*}
and
\begin{align*}
&\left(\prod_{u=2}^{r-1}\binom{k_{u}+m_{u}-1}{m_{u}}\right)\left(\prod_{u=2}^{r-1}\binom{k_{u}+l_{u}+m_{u}-1}{l_{u}}\right) = \prod_{u=2}^{r-1}\binom{k_{u}-1+l_{u}+m_{u}}{k_{u}-1,l_{u},m_{u}}.
\end{align*}
Finally, the lemma is proved by replacing $l_{0}$ and $m_{0}$ with $l_{1}$ and $m_{1}$ respectively.
\end{proof}

\subsection{Depth one} \label{ss:depth one}
In this subsection, we consider the case $r = 1$. In the case $\mathbf{k} = (k+1)$ with $k \ge 1$, Theorem \ref{th: =E} and Theorem \ref{th: =F} reduce to
\begin{align} \label{eq: Tt r=1}
&- i \tilde{T}({k+1}) + i \delta_{k: \textup{even}}\frac{\mathbb{E}_{k}}{k!} \left(\frac{\pi}{2}\right)^{k} \tilde{T}({1})\\ \nonumber
&+\sum_{\substack{j=0\\j:{\rm even}}}^{k-1}\frac{\mathbb{E}_{j}}{j!}\left(\frac{\pi}{2}\right)^{j}T({k+1-j}) - \sum_{\substack{j=0\\j:{\rm odd}}}^{k}\frac{\mathbb{E}_{j}}{j!} \left(\frac{\pi}{2}\right)^{j}\tilde{T}({k+1-j}) \\ \nonumber
&= i^{-{k}} \mathcal{T}(E_{n};{k+1};0)
\end{align}
and
\begin{align} \label{eq: T r=1}
&T({k+1}) - \delta_{k: \textup{odd}} \frac{\mathbb{E}_{k}}{k!} \left(\frac{\pi}{2}\right)^{k} \tilde{T}(1)\\ \nonumber
&+i \sum_{\substack{j=0\\j:{\rm odd}}}^{k-1}\frac{\mathbb{E}_{j}}{j!} \left(\frac{\pi}{2}\right)^{j} {T}({k+1-j}) - i \sum_{\substack{j=0\\j:{\rm even}}}^{k} \frac{\mathbb{E}_{j}}{j!} \left(\frac{\pi}{2}\right)^{j} \tilde{T}({k+1-j})\\ \nonumber
&= i^{-k} \mathcal{T}(F_{n};{k+1};0),
\end{align}
respectively. By taking the imaginary part of \eqref{eq: Tt r=1} and taking the real part of \eqref{eq: T r=1}, we have
\begin{align} \label{eq:Ttk_1}
\tilde{T}({k+1})
&=\begin{dcases} i^{1-k} \mathcal{T}(E_{n};{k+1};0) & (k:{\rm odd}),\\
\frac{\mathbb{E}_{k}}{k!} \left(\frac{\pi}{2}\right)^{k+1} & (k:{\rm even})\end{dcases}
\end{align}
and
\begin{align} \label{eq:Tk_1}
T({k+1})
&=\begin{dcases} \frac{\mathbb{E}_{k}}{k!} \left(\frac{\pi}{2}\right)^{k+1} & (k:{\rm odd}),\\
i^{-k} \mathcal{T}(F_{n};{k+1};0) & (k:{\rm even}). \end{dcases}
\end{align}
Equation \eqref{eq: Tt r=1} holds only for $k \ge 1$, but since $\tilde{T}(1) = \pi/2$, equation \eqref{eq:Ttk_1} also holds for $k = 0$. These equations are equivalent to \eqref{eq:TorTt=E} and the Kaneko-Tsumura relation \eqref{eq:KTrel} because Lemma \ref{lem;calT=CT} gives
\begin{align*}
\mathcal{T}(a_{n};{k+1};0) &= \sum_{\substack{m_{0}+m_{1} = k \\ m_{1} > 0}} \left(\sum_{n=1}^{m_{1}} \binom{m_{1}}{n} a_{k-n} \right) \tilde{T}(\{1\}^{m_{1}-1},2,\{1\}^{m_{0}}),
\end{align*}
and Theorem \ref{th:BE=} gives
\begin{align*}
\left(\sum_{n=1}^{m_{1}} \binom{m_{1} }{n} a_{k-n} \right) = \begin{cases}(-1)^{(k-1)/2}\mathbb{E}(k,m_{1}) & \textup{($a_{n} = E_{n}$ and $k$ is odd)}, \\
(-1)^{k/2}\mathbb{E}(k,m_{1}) & \textup{($a_{n} = F_{n}$ and $k$ is even)}. \end{cases}
\end{align*}
\subsection{Depth two}
In this subsection, we consider the case $r = 2$. In the case $\mathbf{k} = (k_{1}+1, k_{2}+1)$ with $k_{1} \ge 0$, Theorem \ref{th: = C} and Theorem \ref{th: = D} reduce to
\begin{align} \label{eq: Tt r=2}
& - i \frac{\pi}{2} \tilde{T}(k_{1}+1, k_{2}) - \frac{(-1)^{k_{2}/2} {C}_{k_{2}}}{k_{2}!} \left(\frac{\pi}{2}\right)^{k_{2}} \tilde{T}(k_{1}+1, 1) \\ \nonumber
& + \sum_{\substack{j=0\\j:{\rm even}}}^{k_{2}-1}\frac{(-1)^{j/2}{C}_{j}}{j!}\left(\frac{\pi}{2}\right)^{j} {T}(k_{1}+1, k_{2}+1-j) \\ \nonumber
&+ \sum_{\substack{j=0\\j:{\rm even}}}^{k_{2}}\frac{(-1)^{j/2}{D}_{j}}{j!}\left(\frac{\pi}{2}\right)^{j} \tilde{T}(k_{1}+1, k_{2}+1-j) \\ \nonumber
& = i^{-k_{1}-k_{2}} \mathcal{T}({C}_{n};k_{1}+1, k_{2}+1;0) + \sum_{h=0}^{k_{1}} i^{1-{k}_{2}-h} \tilde{T}({k}_{1}+1 - h) \mathcal{T}({C}_{n};{k}_{2}+1;h)
\end{align}
for $k_{2} \ge 1$ and
\begin{align} \label{eq: T r=2}
& i \frac{\pi}{2} {T}(k_{1}+1, k_{2}) - \frac{(-1)^{k_{2}/2} {D}_{k_{2}}}{k_{2}!} \left(\frac{\pi}{2}\right)^{k_{2}} \tilde{T}(k_{1}+1,1) \\ \nonumber
& + \sum_{\substack{j=0\\j:{\rm even}}}^{k_{2}-1}\frac{(-1)^{j/2}{D}_{j}}{j!}\left(\frac{\pi}{2}\right)^{j} {T}(k_{1}+1, k_{2}+1-j) \\ \nonumber
&+ \sum_{\substack{j=0\\j:{\rm even}}}^{k_{2}}\frac{(-1)^{j/2}{C}_{j}}{j!}\left(\frac{\pi}{2}\right)^{j} \tilde{T}(k_{1}+1, k_{2}+1-j)  \\ \nonumber
& = i^{-k_{1}-k_{2}} \mathcal{T}({D}_{n};k_{1}+1, k_{2}+1;0) + \sum_{h=0}^{k_{1}} i^{1-{k}_{2}-h} \tilde{T}({k}_{1} +1 - h) \mathcal{T}({D}_{n};{k}_{2}+1;h)
\end{align}
for $k_{2} \ge 2$, respectively. By taking the imaginary parts of \eqref{eq: Tt r=2} and \eqref{eq: T r=2}, we have 
\begin{align} \label{eq: TTt r=2 odd imag}
&\Im\left(i^{-k_{1}-k_{2}} \mathcal{T}(a_{n};k_{1}+1, k_{2}+1;0) + \sum_{h=0}^{k_{1}} i^{1-{k}_{2}-h} \tilde{T}({k}_{1} + 1 - h) \mathcal{T}(a_{n};{k}_{2}+1;h)\right)\\ \nonumber
&=\begin{cases} -(\pi/2) \tilde{T}(k_{1}+1,k_{2})  &(a_{n} = {C}_{n}),\\
+(\pi/2) T(k_{1}+1,k_{2}) &(a_{n} = {D}_{n}).
\end{cases}
\end{align}
If $k_{1} + k_{2}$ is even, the following theorem follows from Lemma \ref{lem; TtcalT} with $\mathbf{k}=(k_{1}+1,k_{2}+1)$.
\begin{theorem}\label{th:TCorD} When $k_{1} + k_{2}$ is even, we have
\begin{align*}
&\begin{cases} -\tilde{T}(k_{1}+1,k_{2})  &(a_{n} = {C}_{n}),\\
T(k_{1}+1,k_{2}) &(a_{n} = {D}_{n}).
\end{cases}\\
&= i^{ - k_{1} - {k}_{2}} \sum_{m=0}^{k_{1}} \sum_{n=0}^{k_{2}-1}  \frac{E_{m} a_{n}}{m!n!} \left(\frac{\pi}{2}\right)^{m+n}\tilde{T}(\{1\}^{k_{2}-n-1}, 2, \{1\}^{k_{1}-m})\\
&= i^{ - k_{1} - k_{2}} \sum_{\substack{l_{1}+{l}_{2} = k_{1}}}  \sum_{\substack{m_{1}+ {m}_{2} = k_{2} \\ m_{2}>0}} \binom{l_{1}+m_{1}}{l_{1}} \binom{l_{2}+m_{2}}{l_{2}} \\
&\quad \times \left(\sum_{h=0}^{l_{1}} \binom{l_{1}}{h} {E}_{k_{1}-h} \right) \left(\sum_{n=1}^{m_{2}} \binom{m_{2}}{n} a_{k_{2}-n} \right) \tilde{T}(\{1\}^{l_{2} + m_{2}-1}, 2, \{1\}^{l_{1} + m_{1}}).
\end{align*}
\end{theorem}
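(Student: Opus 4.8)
The plan is to read off the claim directly from the imaginary-part identity \eqref{eq: TTt r=2 odd imag} by feeding it into Lemma \ref{lem; TtcalT} with $\mathbf{k} = (k_{1}+1, k_{2}+1)$. For this index one has $r = 2$ and $|\mathbf{k}| = k_{1}+k_{2}+2$, so that $|\mathbf{k}| - r = k_{1}+k_{2}$ is even by hypothesis; hence Lemma \ref{lem; TtcalT} applies in its second (the $i\,\Im$) branch, and the sum $\sum_{h=0}^{k_{1}} i^{1-k_{2}-h}\tilde{T}(k_{1}+1-h)\,\mathcal{T}(a_{n}; k_{2}+1; h)$ occurring there is precisely the second summand inside the $\Im$ in \eqref{eq: TTt r=2 odd imag}.

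The key step is to dispose of the remaining summand $i^{-k_{1}-k_{2}}\mathcal{T}(a_{n}; k_{1}+1, k_{2}+1; 0)$ of \eqref{eq: TTt r=2 odd imag}. I would argue that its imaginary part vanishes: the number $\mathcal{T}(a_{n}; k_{1}+1, k_{2}+1; 0)$ is real, being a real-linear combination of multiple $\tilde{T}$-values with coefficients built from the real sequences $C_{n}$, $D_{n}$ and powers of $\pi/2$, while $i^{-k_{1}-k_{2}} = (-1)^{(k_{1}+k_{2})/2}$ is real because $k_{1}+k_{2}$ is even. Consequently $\Im\bigl(i^{-k_{1}-k_{2}}\mathcal{T}(a_{n}; k_{1}+1, k_{2}+1; 0)\bigr) = 0$, and \eqref{eq: TTt r=2 odd imag} collapses to the assertion that the imaginary part of the single sum $\sum_{h} i^{1-k_{2}-h}\tilde{T}(k_{1}+1-h)\,\mathcal{T}(a_{n}; k_{2}+1; h)$ equals $-(\pi/2)\tilde{T}(k_{1}+1, k_{2})$ when $a_{n} = C_{n}$ and $+(\pi/2)T(k_{1}+1, k_{2})$ when $a_{n} = D_{n}$.

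Finally I would multiply through by $i$, so that the left-hand side becomes exactly the $i\,\Im(\cdots)$ appearing in Lemma \ref{lem; TtcalT}, and then substitute the two explicit expressions furnished by the right-hand side of that lemma. Since $i^{r-|\mathbf{k}|+1} = i^{1-k_{1}-k_{2}}$, dividing both sides by $i(\pi/2)$ converts this prefactor into $i^{-k_{1}-k_{2}}$ and lowers the power of $\pi/2$ by one, yielding precisely the two displayed formulas. The only remaining bookkeeping is to identify the duals arising in the lemma for this index: with $\mathbf{k}^{1} = (k_{2}+1)$ one has $(\mathbf{k}^{1})^{(n)} = (k_{2}+1-n)$ and $(k_{2}+1-n)^{\dag} = (\{1\}^{k_{2}-n-1}, 2)$, which produces the middle expression, while $(\mathbf{k}^{1})^{(k_{2})} + \mathbf{l}^{1} + \mathbf{m}^{1} = (1+l_{2}+m_{2})$ has dual $(\{1\}^{l_{2}+m_{2}-1}, 2)$, which produces the last expression. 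I expect the vanishing of the imaginary part of the depth-two term to be the only genuine point; everything after it is substitution into Lemma \ref{lem; TtcalT} and simplification of the duals.
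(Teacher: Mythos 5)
Your proposal is correct and follows exactly the paper's route: the paper derives the theorem from \eqref{eq: TTt r=2 odd imag} together with Lemma \ref{lem; TtcalT} applied to $\mathbf{k}=(k_{1}+1,k_{2}+1)$, and the point you single out — that $\mathcal{T}(a_{n};k_{1}+1,k_{2}+1;0)$ is real while $i^{-k_{1}-k_{2}}$ is real for even $k_{1}+k_{2}$, so that term contributes nothing to the imaginary part — is precisely the (unstated) step the paper relies on. Your bookkeeping of the prefactors and of the duals $(\{1\}^{k_{2}-n-1},2)$ and $(\{1\}^{l_{2}+m_{2}-1},2)$ also matches the paper's computation.
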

Here, we provide a proof of Theorem \ref{th:Tt+T=EG} in Section \ref{se:intro}.
\begin{proof}[Proof of Theorem \ref{th:Tt+T=EG}]
From Theorem \ref{th:TCorD}, it follows that
\begin{align*}
&{T}(k_{1}+1,k_{2}) + \tilde{T}(k_{1}+1,k_{2})\\
&=- i^{ - k_{1} - k_{2}} \sum_{\substack{l_{1}+{l}_{2} = k_{1}}}  \sum_{\substack{m_{1}+ {m}_{2} = k_{2} \\ m_{2}>0}} \binom{l_{1}+m_{1}}{l_{1}} \binom{l_{2}+m_{2}}{l_{2}} \\
&\quad \times \left(\sum_{h=0}^{l_{1}} \binom{l_{1}}{h} {E}_{k_{1}-h} \right) \left(\sum_{n=1}^{m_{2}} \binom{m_{2}}{n} G_{k_{2}-n} \right) \tilde{T}(\{1\}^{l_{2} + m_{2}-1}, 2, \{1\}^{l_{1} + m_{1}}).
\end{align*}
From Theorem \ref{th:BE=} and Theorem \ref{th:BG=}, we can see that
\begin{align*}
\sum_{h=0}^{l_{1}} \binom{l_{1}}{h} {E}_{k_{1}-h} 
&=  (-1)^{\lfloor (k_{1}-1)/2 \rfloor}\mathbb{E}(k_{1},l_{1}), \\
\sum_{n=1}^{m_{2}} \binom{m_{2}}{n} G_{k_{2}-n} 
&= (-1)^{\lfloor (k_{2}-1)/2 \rfloor}\mathbb{G}(k_{2},m_{2})
\end{align*}
for odd $k_{1}$ and odd $k_{2}$, respectively. Therefore, for odd $k_{1}$ and odd $k_{2}$, we have
\begin{align*}
&{T}(k_{1}+1,k_{2}) + \tilde{T}(k_{1}+1,k_{2})\\
&=\sum_{\substack{l_{1}+{l}_{2} = k_{1} \\ l_{1}>0}}  \sum_{\substack{m_{1}+ {m}_{2} = k_{2} \\ m_{2}>0}} \binom{l_{1}+m_{1}}{l_{1}} \binom{l_{2}+m_{2}}{l_{2}} \\
&\quad \times \mathbb{E}(k_{1},l_{1})\, \mathbb{G}(k_{2},m_{2})\, \tilde{T}(\{1\}^{l_{2} + m_{2}-1}, 2, \{1\}^{l_{1} + m_{1}})\\
&=\sum_{l_{1}=1}^{k_{1}}  \sum_{m_{2}=1}^{k_{2}} \binom{l_{1} + k_{2}-m_{2}}{l_{1}} \binom{k_{1}-l_{1}+m_{2}}{m_{2}} \\
&\quad \times \mathbb{E}(k_{1},l_{1})\, \mathbb{G}(k_{2},m_{2})\, \tilde{T}(\{1\}^{k_{1}-l_{1}+m_{2} -1}, 2, \{1\}^{ l_{1} + k_{2}-m_{2}}).
\end{align*}
This completes the proof of Theorem \ref{th:Tt+T=EG}.
\end{proof}
If $k_{1} + k_{2}$ is odd, we can obtain simple relations only for $k_{1}=0$. We will provide a proof of Theorem \ref{th:Tt+T=G}.
\begin{proof}[Proof of Theorem \ref{th:Tt+T=G}]
When $k_{1}$ is equal to $0$ and $k_{2}$ is a odd number, we can obtain 
\begin{align} \label{eq: Tortt1k_{2}}
&i^{-1-k_{2}} \mathcal{T}(a_{n};1, k_{2}+1;0)=\begin{cases} -(\pi/2) \tilde{T}(1,k_{2})  &(a_{n} = {C}_{n}),\\
+(\pi/2) T(1,k_{2}) &(a_{n} = {D}_{n})
\end{cases}
\end{align}
from equation \eqref{eq: TTt r=2 odd imag}. From Definition \ref{def;calT} and Lemma \ref{lem;calT=CT}, we obtain
\begin{align*}
&\begin{cases} -(\pi/2) \tilde{T}(1,k_{2})  & (a_{n} = {C}_{n}),\\
+(\pi/2) T(1,k_{2}) & (a_{n} = {D}_{n})
\end{cases}\\
&=i^{-1-k_{2}} \sum_{n=0}^{k_{2}-1} \frac{a_{n}}{n!} \left(\frac{\pi}{2}\right)^{n}\tilde{T}(\{1\}^{k_{2}-n-2},3)\\
&=i^{-1-k_{2}} \sum_{m_{2}=1}^{k_{2}}\left(\sum_{n=1}^{m_{2}} \binom{m_{2}}{n} a_{k_{2}-n} \right) \sum_{m_{1} = m_{2}}^{k_{2}}  \tilde{T}(\underbrace{1,\dots,1,1\hspace{-0mm}\overset{\tiny m_{2}}{\check{+1}}\hspace{-0mm},1,\dots,1,1\hspace{-0mm}\overset{\tiny m_{1}}{\check{+1}}\hspace{-0mm},1,\dots,1}_{k_{2}}).
\end{align*}
Theorem \ref{th:Tt+T=G} can be obtained by applying Theorem \ref{th:BG=} to the difference between the equations in each case.
\end{proof}
The left-hand side of the equations \eqref{eq: Tortt1k_{2}} has no $\pi/2$ factor. However, the difference between the equations in each case has a $\pi/2$ factor because
\begin{align*}
\mathcal{T}({G}_{n};\mathbf{k};h)
&=\sum_{n=0}^{k_{r}-2} \frac{G_{n}}{n!} \left(\frac{\pi}{2}\right)^{n}\tilde{T}((\mathbf{k}^{(n)})^{\dag},\{1\}^{h})\\
&=\frac{\pi}{2}\sum_{n=0}^{k_{r}-3} \frac{G_{n+1}}{(n+1)!} \left(\frac{\pi}{2}\right)^{n}\tilde{T}((\mathbf{k}^{(n+1)})^{\dag},\{1\}^{h}) \\
&= \frac{\pi}{2}\mathcal{T}({G}_{n+1}/(n+1);\mathbf{k}^{(1)};h)
\end{align*}
holds for any $\mathbf{k} = (k_{1},\dots,k_{r})$ with $k_{r} \ge 3$. By this observation, we have \begin{align*}
&i^{-1-k_{2}} \mathcal{T}(G_{n+1}/(n+1);1, k_{2};0) = - T(1,k_{2}) - \tilde{T}(1,k_{2}).
\end{align*}
Therefore, the following theorem follows.
\begin{theorem} For odd $k_{2} \ge 3$, we have
\begin{align*}
&T(1,k_{2}) + \tilde{T}(1,k_{2})\\ \nonumber
&=i^{1-k_{2}} \sum_{n=0}^{k_{2}-2} \frac{G_{n+1}}{(n+1)!} \left(\frac{\pi}{2}\right)^{n}\tilde{T}(\{1\}^{k_{2}-n-2},3)\\ \nonumber
&=i^{1-k_{2}} \sum_{m_{2}=1}^{k_{2}-1} \left(\sum_{n=1}^{m_{2}} \binom{m_{2}}{n} \frac{G_{k_{2}-n}}{k_{2}-n} \right) \sum_{m_{1}=m_{2}}^{k_{2}-1} \tilde{T}(\underbrace{1,\dots,1,1\hspace{-0mm}\overset{\tiny m_{2}}{\check{+1}}\hspace{-0mm},1,\dots,1,1\hspace{-0mm}\overset{\tiny m_{1}}{\check{+1}}\hspace{-0mm},1,\dots,1}_{k_{2}-1}).
\end{align*}
\end{theorem}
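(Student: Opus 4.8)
The plan is to read the theorem off the two displayed identities that immediately precede it, so the work is almost entirely bookkeeping: first isolate a clean relation for $\mathcal{T}(G_{n+1}/(n+1);1,k_2;0)$ in terms of $T(1,k_2)+\tilde{T}(1,k_2)$, and then expand that single object in the two promised ways using Definition \ref{def;calT} and Lemma \ref{lem;calT=CT}.

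First I would record the starting identity. Subtracting the $a_n=D_n$ instance of \eqref{eq: Tortt1k_{2}} from the $a_n=C_n$ instance and using $G_n=C_n-D_n$ together with the linearity of $\mathcal{T}(a_n;\mathbf{k};h)$ in the sequence $\{a_n\}$ gives $i^{-1-k_2}\mathcal{T}(G_n;1,k_2+1;0)=-\frac{\pi}{2}\bigl(T(1,k_2)+\tilde{T}(1,k_2)\bigr)$. Applying the factorization $\mathcal{T}(G_n;\mathbf{k};h)=\frac{\pi}{2}\mathcal{T}(G_{n+1}/(n+1);\mathbf{k}^{(1)};h)$ in the case $\mathbf{k}=(1,k_2+1)$, so that $\mathbf{k}^{(1)}=(1,k_2)$ (legitimate since $k_2+1\ge 3$), and cancelling the common factor $\pi/2$ yields $i^{-1-k_2}\mathcal{T}(G_{n+1}/(n+1);1,k_2;0)=-(T(1,k_2)+\tilde{T}(1,k_2))$. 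Multiplying by $-1$ and using $-i^{-1-k_2}=i^{1-k_2}$ (because $i^{-1}=-i$) then produces
\begin{align*}
T(1,k_2)+\tilde{T}(1,k_2)=i^{1-k_2}\,\mathcal{T}(G_{n+1}/(n+1);1,k_2;0).
\end{align*}

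The first asserted equality is now immediate from Definition \ref{def;calT} with $\mathbf{k}=(1,k_2)$, $h=0$ and $a_n=G_{n+1}/(n+1)$: indeed $\frac{a_n}{n!}=\frac{G_{n+1}}{(n+1)!}$, and for $0\le n\le k_2-2$ the reduced index $(1,k_2)^{(n)}=(1,k_2-n)$ is admissible with dual $\bigl((1,k_2-n)\bigr)^{\dag}=(\{1\}^{k_2-n-2},3)$. For the second equality I would apply Lemma \ref{lem;calT=CT} with the same data, writing the summation indices as $\mu_0+\mu_1+\mu_2=k_2-1$ with $\mu_2>0$; since $k_1=1$ every binomial prefactor except the inner summation factor collapses to $1$, $(1,k_2)^{(k_2-1)}=(1,1)$, and each summand reduces to $\bigl(\sum_{n=1}^{\mu_2}\binom{\mu_2}{n}\frac{G_{k_2-n}}{k_2-n}\bigr)\tilde{T}((1+\mu_1,1+\mu_2)^{\dag},\{1\}^{\mu_0})$. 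Evaluating the dual gives the index $(\{1\}^{\mu_2-1},2,\{1\}^{\mu_1-1},2,\{1\}^{\mu_0})$ when $\mu_1\ge 1$ and the index $(\{1\}^{\mu_2-1},3,\{1\}^{\mu_0})$ when $\mu_1=0$, both of length $k_2-1$.

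The one genuine obstacle is the final reindexing that matches these indices with the compact notation of the statement. The two incremented slots sit at the absolute positions $\mu_2$ and $\mu_1+\mu_2$, the degenerate case $\mu_1=0$ merging them into a single $3$ at position $\mu_2$; setting the statement's variables to be $m_2=\mu_2$ (the first increment position) and $m_1=\mu_1+\mu_2$ (the second increment position), so that $m_1$ ranges over $m_2\le m_1\le k_2-1$ as $\mu_1\ge 0$ with $\mu_0=k_2-1-m_1\ge 0$, converts the double sum into $\sum_{m_2=1}^{k_2-1}\sum_{m_1=m_2}^{k_2-1}$, with the $G$-weight depending only on $m_2$ and factoring out exactly as written. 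Once this identification is made, the remaining sign and power-of-$i$ tracking is routine.
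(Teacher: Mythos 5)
Your proposal is correct and follows essentially the same route as the paper: subtract the $D_n$ case of \eqref{eq: Tortt1k_{2}} from the $C_n$ case to get $i^{-1-k_{2}}\mathcal{T}(G_{n};1,k_{2}+1;0)=-\frac{\pi}{2}\bigl(T(1,k_{2})+\tilde{T}(1,k_{2})\bigr)$, apply the factorization $\mathcal{T}(G_{n};\mathbf{k};h)=\frac{\pi}{2}\mathcal{T}(G_{n+1}/(n+1);\mathbf{k}^{(1)};h)$, cancel $\pi/2$, and then expand via Definition \ref{def;calT} and Lemma \ref{lem;calT=CT}. The only difference is that you spell out the final expansion and reindexing explicitly, which the paper leaves implicit by analogy with the proof of Theorem \ref{th:Tt+T=G}.
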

\subsection{Depth three}
In this subsection, we consider the case $r = 3$. In this case, we can obtain simple relations only for special indices. The first is the case $\mathbf{k} = (k_{1}+1,1,k_{3}+1)$. 
\begin{theorem}Let $k_{1}$ be a non-negative integer and $k_{3}$ a positive integer. If $k_{1}+k_{3}$ is even, we have
\begin{align*}
& \tilde{T}(k_{1}+1,1,k_{3}+1) - \delta_{k_{3}:even}\frac{\mathbb{E}_{k_{3}}}{k_{3}!} \left(\frac{\pi}{2}\right)^{k_{3}} \tilde{T}(k_{1}+1,1,1)\\
&- \delta_{k_{3}: \textup{odd}} \tilde{T}({k}_{1}+1,1) \tilde{T}({k}_{3}+1)\\
&=i^{-{k}_{1}-k_{3}} \sum_{m=0}^{k_{1}} \sum_{n=0}^{k_{3}-1} \frac{E_{m}}{m!} \frac{E_{n}}{n!}\left(\frac{\pi}{2}\right)^{m+n+1}\tilde{T}(\{1\}^{k_{3}-1-n},3,\{1\}^{k_{1}-m})\\
&=i^{-{k}_{1}-k_{3}} \frac{\pi}{2} \sum_{\substack{l_{1}+l_{2}+l_{3} = k_{1}}}  \sum_{\substack{m_{1}+m_{2}+m_{3} = k_{3} \\ m_{3} > 0}} \left( \sum_{h=0}^{l_{1}} \binom{l_{1}}{h} {E}_{k_{1}-h} \right) \left(\sum_{n=1}^{m_{3}} \binom{m_{3}}{n} E_{k_{3}-n} \right) \\
&\quad \times \binom{l_{1}+m_{1}}{l_{1}} \binom{l_{2}+m_{2}}{l_{2}} \binom{l_{3}+m_{3}}{l_{3}}  \tilde{T}(\underbrace{1,\dots,1,1\hspace{-2mm}\overset{\tiny l_{3}+m_{3}}{\check{+1}}\hspace{-2mm},1,\dots,1,1\hspace{-7mm}\overset{\tiny l_{3}+m_{3}+l_{2}+m_{2}}{\check{+1}}\hspace{-7mm},1,\dots,1}_{k_{1}+k_{3}}).
\end{align*}
If $k_{1}+k_{3}$ is odd, we have
\begin{align*}
&T(k_{1}+1,1,k_{3}+1) + \delta_{k_{3}: \textup{odd}} \frac{\mathbb{E}_{k_{3}}}{k_{3}!} \left(\frac{\pi}{2}\right)^{k_{3}} \tilde{T}(k_{1}+1,1,1)\\
&+\delta_{k_{3}:even}\tilde{T}({k}_{1}+1,1) {T}({k}_{3}+1)\\
&=i^{1-{k}_{1}-k_{3}} \sum_{m=0}^{k_{1}} \sum_{n=0}^{k_{3}-1} \frac{E_{m}}{m!} \frac{F_{n}}{n!}\left(\frac{\pi}{2}\right)^{m+n+1}\tilde{T}(\{1\}^{k_{3}-1-n},3,\{1\}^{k_{1}-m})\\
&=i^{1-{k}_{1}-k_{3}} \frac{\pi}{2} \sum_{\substack{l_{1}+l_{2}+l_{3} = k_{1}}}  \sum_{\substack{m_{1}+m_{2}+m_{3} = k_{3} \\ m_{3} > 0}} \left( \sum_{h=0}^{l_{1}} \binom{l_{1}}{h} {E}_{k_{1}-h} \right) \left(\sum_{n=1}^{m_{3}} \binom{m_{3}}{n} F_{k_{3}-n} \right) \\
&\quad \times \binom{l_{1}+m_{1}}{l_{1}} \binom{l_{2}+m_{2}}{l_{2}} \binom{l_{3}+m_{3}}{l_{3}}  \tilde{T}(\underbrace{1,\dots,1,1\hspace{-2mm}\overset{\tiny l_{3}+m_{3}}{\check{+1}}\hspace{-2mm},1,\dots,1,1\hspace{-7mm}\overset{\tiny l_{3}+m_{3}+l_{2}+m_{2}}{\check{+1}}\hspace{-7mm},1,\dots,1}_{k_{1}+k_{3}}).
\end{align*}
\end{theorem}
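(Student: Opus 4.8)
The plan is to specialize the general relations Theorem~\ref{th: =E} and Theorem~\ref{th: =F} to the index $\mathbf{k}=(k_1+1,1,k_3+1)$ and to extract an imaginary or real part, in direct analogy with the depth-two derivation of Theorem~\ref{th:TCorD}. When $k_1+k_3$ is even I would start from Theorem~\ref{th: =E} (the relation built from $E_n$) and take imaginary parts; when $k_1+k_3$ is odd I would start from Theorem~\ref{th: =F} (built from $F_n$) and take real parts. The two branches run in parallel, so I describe the even case and indicate the changes at the end.

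First I would record the data $r=3$, $|\mathbf{k}|=k_1+k_3+3$, $\mathbf{k}^{(j)}=(k_1+1,1,k_3+1-j)$ and $\mathbf{k}^{(k_r-1)}=(k_1+1,1,1)$, together with $i^{r}=-i$ and $i^{r+1}=1$. On the left-hand side of Theorem~\ref{th: =E} the leading term $-i^{r}\tilde{T}(\mathbf{k})$ and the $\delta_{k_r:\textup{odd}}=\delta_{k_3:\textup{even}}$ term are both $i$ times real numbers, while the two sums $\sum_{j:\textup{even}}\tfrac{\mathbb{E}_j}{j!}(\pi/2)^j T(\mathbf{k}^{(j)})$ and $i^{r+1}\sum_{j:\textup{odd}}\tfrac{\mathbb{E}_j}{j!}(\pi/2)^j\tilde{T}(\mathbf{k}^{(j)})$ are real. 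Taking imaginary parts therefore collapses the whole left-hand side to $\tilde{T}(k_1+1,1,k_3+1)-\delta_{k_3:\textup{even}}\tfrac{\mathbb{E}_{k_3}}{k_3!}(\pi/2)^{k_3}\tilde{T}(k_1+1,1,1)$.

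On the right-hand side I would split the three contributions $j=0,1,2$. Since every multiple $\tilde{T}$-value is real, $\mathcal{T}(E_n;\mathbf{k};0)$ is real and $i^{r-|\mathbf{k}|}=i^{-k_1-k_3}=\pm1$, so the $j=0$ contribution is real and drops out. Because the middle entry of $\mathbf{k}$ equals $1$, the $j=2$ term reduces to the single summand $i^{2-k_3}\tilde{T}(k_1+1,1)\,\mathcal{T}(E_n;k_3+1;0)$; here I would apply the depth-one evaluation \eqref{eq:Ttk_1}, which gives $\mathcal{T}(E_n;k_3+1;0)=i^{k_3-1}\tilde{T}(k_3+1)$ for odd $k_3$ and makes the term real for even $k_3$, so that its imaginary part is exactly $\delta_{k_3:\textup{odd}}\tilde{T}(k_1+1,1)\tilde{T}(k_3+1)$. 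Moving this across yields the last boundary term of the statement. The surviving $j=1$ term is precisely the quantity evaluated by Lemma~\ref{lem; TtcalT} with $\mathbf{k}^1=(1,k_3+1)$; since $|\mathbf{k}|-r=k_1+k_3$ is even, its imaginary part is given by the two successive rewritings in that lemma, producing first $i^{-k_1-k_3}\sum_{m,n}\tfrac{E_m}{m!}\tfrac{E_n}{n!}(\pi/2)^{m+n+1}\tilde{T}(\{1\}^{k_3-1-n},3,\{1\}^{k_1-m})$ (using $(1,k_3+1-n)^{\dag}=(\{1\}^{k_3-1-n},3)$) and then the triple-binomial sum, in which the middle factor $\binom{l_2+m_2}{l_2}$ is the multinomial $\binom{k_2-1+l_2+m_2}{k_2-1,l_2,m_2}$ specialized at $k_2=1$.

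Collecting the pieces gives the two displayed identities for even $k_1+k_3$. For odd $k_1+k_3$ the same steps apply verbatim to Theorem~\ref{th: =F}, taking real parts instead of imaginary parts and using \eqref{eq:Tk_1} in place of \eqref{eq:Ttk_1}: now the $j=0$ contribution is purely imaginary and drops, the $j=2$ term is real only for even $k_3$ and there contributes $-\delta_{k_3:\textup{even}}\tilde{T}(k_1+1,1)T(k_3+1)$, and Lemma~\ref{lem; TtcalT} supplies the corresponding $F_n$-double sum. I expect the only real obstacle to be the parity bookkeeping of the powers of $i$, which must be tracked carefully to confirm that exactly the intended terms survive the real/imaginary projection; the parity-dependent vanishing of the $j=2$ boundary term, governed by \eqref{eq:Ttk_1} and \eqref{eq:Tk_1}, is the most delicate point.
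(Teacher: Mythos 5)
Your proposal is correct and follows essentially the same route as the paper: specialize Theorems \ref{th: =E} and \ref{th: =F} to $\mathbf{k}=(k_{1}+1,1,k_{3}+1)$, project onto the imaginary (resp.\ real) part according to the parity of $k_{1}+k_{3}$, evaluate the $j=2$ boundary term via \eqref{eq:Ttk_1} and \eqref{eq:Tk_1}, and apply Lemma \ref{lem; TtcalT} to the $j=1$ term. Your parity bookkeeping (including taking the \emph{real} part of \eqref{eq: T r=3} in the odd case, which the paper's prose mislabels as the imaginary part although its displayed formula uses $\Re$) matches the paper's actual computation.
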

\begin{proof}
We assume $\mathbf{k} = (k_{1}+1,1,k_{3}+1)$ with $k_{1} \ge 0$ and $k_{3} \ge 1$. In this case, Theorem \ref{th: =E} and Theorem \ref{th: =F} reduce to
\begin{align} \label{eq: Tt r=3}
& i \tilde{T}(k_{1}+1,1,k_{3}+1) - i\delta_{k_{3}:even}\frac{\mathbb{E}_{k_{3}}}{k_{3}!} \left(\frac{\pi}{2}\right)^{k_{3}} \tilde{T}(k_{1}+1,1,1)\\ \nonumber
&+\sum_{\substack{j=0\\j:{\rm even}}}^{k_{3}-1}\frac{\mathbb{E}_{j}}{j!}\left(\frac{\pi}{2}\right)^{j}T(\mathbf{k}^{(j)}) + \sum_{\substack{j=0\\j:{\rm odd}}}^{k_{3}}\frac{\mathbb{E}_{j}}{j!} \left(\frac{\pi}{2}\right)^{j}\tilde{T}(\mathbf{k}^{(j)}) \\ \nonumber
&= i^{-k_{1}-k_{3}} \mathcal{T}(E_{n};\mathbf{k};0) + \sum_{h=0}^{k_{1}} i^{r-|\mathbf{k}^{1}|-h} \tilde{T}((\mathbf{k}_{1})^{(h)}) \mathcal{T}(E_{n};\mathbf{k}^{1};h)\\ \nonumber
&\quad + i^{2-{k}_{3}} \tilde{T}(k_{1}+1,1) \mathcal{T}(E_{n};k_{3}+1;0)
\end{align} 
and 
\begin{align} \label{eq: T r=3}
&T(k_{1}+1,1,k_{3}+1) + \delta_{k_{3}: \textup{odd}} \frac{\mathbb{E}_{k_{3}}}{k_{3}!} \left(\frac{\pi}{2}\right)^{k_{3}} \tilde{T}(k_{1}+1,1,1)\\ \nonumber
&+i \sum_{\substack{j=0\\j:{\rm odd}}}^{k_{3}-1}\frac{\mathbb{E}_{j}}{j!} \left(\frac{\pi}{2}\right)^{j} {T}(\mathbf{k}^{(j)}) + i\sum_{\substack{j=0\\j:{\rm even}}}^{k_{3}} \frac{\mathbb{E}_{j}}{j!} \left(\frac{\pi}{2}\right)^{j} \tilde{T}(\mathbf{k}^{(j)})\\ \nonumber
&= i^{-k_{1}-k_{3}} \mathcal{T}(F_{n};\mathbf{k};0) + \sum_{h=0}^{k_{1}} i^{r-|\mathbf{k}^{1}|-h} \tilde{T}((\mathbf{k}_{1})^{(h)}) \mathcal{T}(F_{n};\mathbf{k}^{1};h) \\ \nonumber
&\quad + i^{2-{k}_{3}} \tilde{T}(k_{1}+1,1) \mathcal{T}(F_{n};k_{3}+1;0),
\end{align}
respectively. 
If $k_{1}+k_{3}$ is even, by taking the imaginary part of \eqref{eq: Tt r=3}, we have
\begin{align*} 
& \tilde{T}(k_{1}+1,1,k_{3}+1) - \delta_{k_{3}: \textup{even}} \frac{\mathbb{E}_{k_{3}}}{k_{3}!} \left(\frac{\pi}{2}\right)^{k_{3}} \tilde{T}(k_{1}+1,1,1)\\
&- \delta_{k_{3}: \textup{odd}} i^{1-{k}_{3}} \tilde{T}({k}_{1}+1,1) \mathcal{T}(E_{n};{k}_{3}+1;0)\\
&=\Im\left(\sum_{h=0}^{k_{1}} i^{r-|\mathbf{k}^{1}|-h} \tilde{T}((\mathbf{k}_{1})^{(h)}) \mathcal{T}(E_{n};\mathbf{k}^{1};h) \right).
\end{align*}
The first part of the theorem can be obtained by combining this with equation \eqref{eq:Ttk_1} and Lemma \ref{lem; TtcalT}. If $k_{1}+k_{3}$ is odd, by taking the imaginary part of \eqref{eq: T r=3}, we have
\begin{align*}
&T(k_{1}+1,1,k_{3}+1) + \delta_{k_{3}: \textup{odd}} \frac{\mathbb{E}_{k_{3}}}{k_{3}!} \left(\frac{\pi}{2}\right)^{k_{3}} \tilde{T}(k_{1}+1,1,1)\\
&+\delta_{k_{3}: \textup{even}}i^{-{k}_{3}} \tilde{T}({k}_{1}+1,1) \mathcal{T}(F_{n};{k}_{3}+1;0)\\
&=\Re\left(\sum_{h=0}^{k_{1}} i^{r-|\mathbf{k}^{1}|-h} \tilde{T}((\mathbf{k}_{1})^{(h)}) \mathcal{T}(F_{n};\mathbf{k}^{1};h)\right).
\end{align*}
The latter part of the theorem can be obtained by combining this with equation \eqref{eq:Tk_1} and Lemma \ref{lem; TtcalT}.
\end{proof}
Note that Theorem \ref{th:TorTt1=EE} in Section \ref{se:intro} readily follows from this theorem and Theorem \ref{th:BE=}. Next, we prove Theorem \ref{th:TorTt=E}.
\begin{proof}[Proof of Theorem \ref{th:TorTt=E}]
If $k_{1}$ is equal to $0$ and $k_{3}$ is a positive odd integer, by taking the imaginary part of  \eqref{eq: Tt r=3}, we obtain
\begin{align*} 
& \tilde{T}(1,1,k_{3}+1) \\
&= i^{-1-k_{3}} \mathcal{T}(E_{n};1,1,k_{3}+1;0) + i^{1-{k}_{3}} \tilde{T}(1,1) \mathcal{T}(E_{n};k_{3}+1;0).
\end{align*}
By applying Definition \ref{def;calT}, Lemma \ref{lem;calT=CT} and equation \eqref{eq:Ttk_1} to this equation, we obtain
\begin{align*} 
& \tilde{T}(1,1,k_{3}+1) - \tilde{T}(1,1) \tilde{T}(k_{3}+1) \\
&= i^{-1-k_{3}} \sum_{n=0}^{k_{3}-1} \frac{E_{n}}{n!} \left(\frac{\pi}{2}\right)^{n}\tilde{T}(\{1\}^{k_{3}-1-n},4)\\
&= i^{-1-k_{3}} \sum_{m_{3}=1}^{k_{3}} \left(\sum_{n=1}^{m_{3}} \binom{m_{3}}{n} E_{k_{3}-n} \right) \\
&\quad \times \sum_{m_{2} = m_{3}}^{k_{2}}  \sum_{m_{1} = m_{2}}^{k_{3}}\tilde{T}(\underbrace{1,\dots,1,1\hspace{-0mm}\overset{\tiny m_{3}}{\check{+1}}\hspace{-0mm},1,\dots,1,1\hspace{-0mm}\overset{\tiny m_{2}}{\check{+1}}\hspace{-0mm},1,\dots,1,1\hspace{-0mm}\overset{\tiny m_{1}}{\check{+1}}\hspace{-0mm},1,\dots,1}_{k_{3}}).
\end{align*}
On the other hand, if $k_{1}$ is equal to $0$ and $k_{3}$ is a positive even integer, by taking the real part of \eqref{eq: T r=3}, we obtain
\begin{align*} 
&T(1,1,k_{3}+1) \\
&= i^{-k_{3}} \mathcal{T}(F_{n};1,1,k_{3}+1;0) - i^{-{k}_{3}} \tilde{T}(1,1) \mathcal{T}(F_{n};k_{3}+1;0).
\end{align*}
By applying Definition \ref{def;calT}, Lemma \ref{lem;calT=CT} and the equatioin \eqref{eq:Tk_1} to this equation, we obtain
\begin{align*} 
&T(1,1,k_{3}+1) + \tilde{T}(1,1) {T}(k_{3}+1)\\
&= i^{-k_{3}} \sum_{n=0}^{k_{3}-1} \frac{F_{n}}{n!} \left(\frac{\pi}{2}\right)^{n}\tilde{T}(\{1\}^{k_{3}-1-n},4)\\
&= i^{-k_{3}} \sum_{m_{3}=1}^{k_{3}} \left(\sum_{n=1}^{m_{3}} \binom{m_{3}}{n} F_{k_{3}-n} \right) \\
&\quad \times \sum_{m_{2} = m_{3}}^{k_{2}}  \sum_{m_{1} = m_{2}}^{k_{3}}\tilde{T}(\underbrace{1,\dots,1,1\hspace{-0mm}\overset{\tiny m_{3}}{\check{+1}}\hspace{-0mm},1,\dots,1,1\hspace{-0mm}\overset{\tiny m_{2}}{\check{+1}}\hspace{-0mm},1,\dots,1,1\hspace{-0mm}\overset{\tiny m_{1}}{\check{+1}}\hspace{-0mm},1,\dots,1}_{k_{3}}).
\end{align*}
Finally, by using Theorem \ref{th:BE=}, Theorem \ref{th:TorTt=E} can be obtained.
\end{proof}
Next, we will consider the case $\mathbf{k} = (k_{1}+1,2,k_{3}+1)$ with $k_{1} \ge 0$ and $k_{3} \ge 1$ and proceed to prove Theorem \ref{th:TorTt2=EE}.
\begin{proof}[Proof of Theorem \ref{th:TorTt2=EE}]

We assume $\mathbf{k} = (k_{1}+1,2,k_{3}+1)$ with $k_{1} \ge 0$ and $k_{3} \ge 1$. In this case, Theorem \ref{th: =E} and Theorem \ref{th: =F} reduce to

\begin{align}\label{eq: Tt r=3 2}
&i \tilde{T}(k_{1}+1,2,k_{3}+1) - i\delta_{k_{3}: \textup{even}}\frac{\mathbb{E}_{k_{3}}}{k_{3}!} \left(\frac{\pi}{2}\right)^{k_{3}} \tilde{T}(k_{1}+1,2,1)\\ \nonumber
&+\sum_{\substack{j=0\\j:{\rm even}}}^{k_{3}-1}\frac{\mathbb{E}_{j}}{j!}\left(\frac{\pi}{2}\right)^{j}T(\mathbf{k}^{(j)}) + \sum_{\substack{j=0\\j:{\rm odd}}}^{k_{3}}\frac{\mathbb{E}_{j}}{j!} \left(\frac{\pi}{2}\right)^{j}\tilde{T}(\mathbf{k}^{(j)}) \\ \nonumber
&= i^{-k_{1}-k_{3}-1} \mathcal{T}(E_{n};\mathbf{k};0) + \sum_{h=0}^{k_{1}} i^{r-|\mathbf{k}^{j}|-h} \tilde{T}((\mathbf{k}_{j})^{(h)}) \mathcal{T}(E_{n};\mathbf{k}^{j};h)\\ \nonumber
&\quad + i^{2-k_{3}} \tilde{T}(k_{1}+1,2) \mathcal{T}(E_{n};k_{3}+1;0)+ i^{1-k_{3}} \tilde{T}(k_{1}+1,1) \mathcal{T}(E_{n};k_{3}+1;1)
\end{align}
and
\begin{align}\label{eq: T r=3 2}
&T(k_{1}+1,2,k_{3}+1) + \delta_{k_{3}: \textup{odd}} \frac{\mathbb{E}_{k_{3}}}{k_{3}!} \left(\frac{\pi}{2}\right)^{k_{3}} \tilde{T}(k_{1}+1,2,1)\\ \nonumber
&+i \sum_{\substack{j=0\\j:{\rm odd}}}^{k_{3}-1}\frac{\mathbb{E}_{j}}{j!} \left(\frac{\pi}{2}\right)^{j} {T}(\mathbf{k}^{(j)}) + i\sum_{\substack{j=0\\j:{\rm even}}}^{k_{3}} \frac{\mathbb{E}_{j}}{j!} \left(\frac{\pi}{2}\right)^{j} \tilde{T}(\mathbf{k}^{(j)})\\ \nonumber
&= i^{-k_{1}-k_{3}-1} \mathcal{T}(F_{n};\mathbf{k};0) + \sum_{h=0}^{k_{1}} i^{r-|\mathbf{k}^{j}|-h} \tilde{T}((\mathbf{k}_{j})^{(h)}) \mathcal{T}(F_{n};\mathbf{k}^{j};h)\\ \nonumber
&\quad + i^{2-k_{3}} \tilde{T}(k_{1}+1,2) \mathcal{T}(F_{n};k_{3}+1;0)+ i^{1-k_{3}} \tilde{T}(k_{1}+1,1) \mathcal{T}(F_{n};k_{3}+1;1),
\end{align}
respectively. 

We assume $k_{1} \ge 0$ is an even number. Taking $k_{3}$ as an odd number and the imaginary part of equation \eqref{eq: Tt r=3 2}, we obtain
\begin{align*}
& \tilde{T}(k_{1}+1,2,k_{3}+1) \\
&= \Im\left(\sum_{h=0}^{k_{1}} i^{r-|\mathbf{k}^{j}|-h} \tilde{T}((\mathbf{k}_{j})^{(h)}) \mathcal{T}(E_{n};\mathbf{k}^{j};h) \right) + i^{1-k_{3}} \tilde{T}(k_{1}+1,2) \mathcal{T}(E_{n};k_{3}+1;0).
\end{align*}
By applying equation \eqref{eq:Ttk_1} and Lemma \ref{lem; TtcalT} to this equation, we obtain
\begin{align*}
& \tilde{T}(k_{1}+1,2,k_{3}+1) - \tilde{T}(k_{1}+1,2) \tilde{T}(k_{3}+1)\\
&=i^{-k_{1}-k_{3}-1} \sum_{m=0}^{k_{1}} \sum_{n=0}^{k_{3}-1} \frac{E_{m}}{m!} \frac{E_{n}}{n!}\left(\frac{\pi}{2}\right)^{m+n+1}\tilde{T}(\{1\}^{k_{3}-n-1},2,2,\{1\}^{k_{1}-m})\\
&=i^{-k_{1}-k_{3}-1} \frac{\pi}{2} \sum_{\substack{l_{1}+l_{2}+l_{3}= k_{1}}}  \sum_{\substack{m_{1}+m_{2}+m_{3} = k_{3} \\ m_{3} > 0}} \left( \sum_{h=0}^{l_{1}} \binom{l_{1}}{h} {E}_{k_{1}-h} \right) \left(\sum_{n=1}^{m_{3}} \binom{m_{3}}{n} E_{k_{3}-n} \right) \\
&\quad \times \binom{l_{1}+m_{1}}{l_{1}} \binom{1+l_{2}+m_{2}}{1,l_{2},m_{2}} \binom{l_{3}+m_{3}}{l_{3}} \tilde{T}(\underbrace{1,\dots,1,\hspace{-3mm}\overset{\tiny l_{3}+m_{3}}{\check{2}}\hspace{-3mm},1,\dots,1,\hspace{-10mm}\overset{\tiny l_{3}+m_{3}+l_{2}+m_{2}+1}{\check{2}}\hspace{-10mm},1,\dots,1}_{k_{1}+k_{3}+1}).
\end{align*}
Keeping $k_{1}$ even, and taking $k_{3}$ as an even number and the real part of equation \eqref{eq: T r=3 2}, we obtain
\begin{align*}
&T(k_{1}+1,2,k_{3}+1) \\
&= \Re\left(\sum_{h=0}^{k_{1}} i^{r-|\mathbf{k}^{j}|-h} \tilde{T}((\mathbf{k}_{j})^{(h)}) \mathcal{T}(F_{n};\mathbf{k}^{j};h)\right) - i^{-k_{3}} \tilde{T}(k_{1}+1,2) \mathcal{T}(F_{n};k_{3}+1;0).
\end{align*}
By applying equation \eqref{eq:Tk_1} and Lemma \ref{lem; TtcalT} to this equation, we obtain
\begin{align*}
&T(k_{1}+1,2,k_{3}+1) + \tilde{T}(k_{1}+1,2) {T}(k_{3}+1)\\
&=i^{-k_{1}-k_{3}} \sum_{m=0}^{k_{1}} \sum_{n=0}^{k_{3}-1} \frac{E_{m}}{m!} \frac{F_{n}}{n!}\left(\frac{\pi}{2}\right)^{m+n+1}\tilde{T}(\{1\}^{k_{3}-n-1},2,2,\{1\}^{k_{1}-m})\\
&=i^{-k_{1}-k_{3}} \frac{\pi}{2} \sum_{\substack{l_{1}+l_{2}+l_{3}= k_{1}}}  \sum_{\substack{m_{1}+m_{2}+m_{3} = k_{3} \\ m_{3} > 0}} \left( \sum_{h=0}^{l_{1}} \binom{l_{1}}{h} {E}_{k_{1}-h} \right) \left(\sum_{n=1}^{m_{3}} \binom{m_{3}}{n} F_{k_{3}-n} \right) \\
&\quad \times \binom{l_{1}+m_{1}}{l_{1}} \binom{1+l_{2}+m_{2}}{1,l_{2},m_{2}} \binom{l_{3}+m_{3}}{l_{3}} \tilde{T}(\underbrace{1,\dots,1,\hspace{-3mm}\overset{\tiny l_{3}+m_{3}}{\check{2}}\hspace{-3mm},1,\dots,1,\hspace{-10mm}\overset{\tiny l_{3}+m_{3}+l_{2}+m_{2}+1}{\check{2}}\hspace{-10mm},1,\dots,1}_{k_{1}+k_{3}+1}).
\end{align*}
Therefore, combining these with Theorem \ref{th:BE=} and Theorem \ref{th:Ekk-j}  proves Theorem \ref{th:TorTt2=EE}.
\end{proof}

\section*{Acknowledgment}
The author is deeply grateful to Prof. Masanobu Kaneko and Prof. Kohji Matsumoto for their helpful comments.
\appendix
\section{Tables} \label{se:Appendix}
The following are the first few values of the numbers that are dealt with in this paper.
{\footnotesize
\renewcommand*{\arraystretch}{1}
\begin{table}[H]
\caption{$\mathbb{E}(k,j)$}
\centering
  \begin{tabular}{|c|c|c|c|c|c|c|c|c|c|c|c|c|c|} \hline
\backslashbox{$k$}{$j$}&$0$&$1$&$2$&$3$&$4$&$5$&$6$&$7$&$8$&$9$&$10$\\ \hline 
$0$&$1$& & & & & & & & & &\\ \hline 
$1$&$0$&$1$ & & & & & & & & &\\ \hline 
$2$&$0$&$1$&$1$&  &  &  &  &  &  &  &  \\ \hline
$3$&$0$&$1$&$2$&$2$&  &  &  &  &  &  &  \\ \hline
$4$&$0$&$2$&$4$&$5$&$5$&  &  &  &  &  &  \\ \hline
$5$&$0$&$5$&$10$&$14$&$16$&$16$&  &  &  &  &  \\ \hline
$6$&$0$&$16$&$32$&$46$&$56$&$61$&$61$&  &  &  &  \\ \hline
$7$&$0$&$61$&$122$&$178$&$224$&$256$&$272$&$272$&  &  &  \\ \hline
$8$&$0$&$272$&$544$&$800$&$1024$&$1202$&$1324$&$1385$&$1385$&  &  \\ \hline
$9$&$0$&$1385$&$2770$&$4094$&$5296$&$6320$&$7120$&$7664$&$7936$&$7936$& \\ \hline
$10$&$0$&$7936$&$15872$&$23536$&$30656$&$36976$&$42272$&$46366$&$49136$&$50521$&$50521$  \\ \hline
  \end{tabular}
\end{table}
\renewcommand*{\arraystretch}{1}
\begin{table}[H]
\caption{$\mathbb{G}(k,j)$}
\centering
  \begin{tabular}{|c|c|c|c|c|c|c|c|c|c|c|c|c|c|} \hline
\backslashbox{$k$}{$j$}&$0$&$1$&$2$&$3$&$4$&$5$&$6$&$7$&$8$&$9$&$10$\\ \hline 
$0$&$0$& & & & & & & & & &\\ \hline 
$1$&$0$&$0$ & & & & & & & & &\\ \hline 
$2$&$0$&$1$&$0$&  &  &  &  &  &  &  &  \\ \hline
$3$&$0$&$1$&$1$&$0$&  &  &  &  &  &  &  \\ \hline
$4$&$0$&$0$&$1$&$0$&$0$&  &  &  &  &  &  \\ \hline
$5$&$0$&$1$&$2$&$2$&$1$&$0$&  &  &  &  &  \\ \hline
$6$&$0$&$0$&$1$&$1$&$1$&$0$&$0$&  &  &  &  \\ \hline
$7$&$0$&$3$&$6$&$8$&$8$&$6$&$3$&$0$&  &  &  \\ \hline
$8$&$0$&$0$&$3$&$3$&$5$&$3$&$3$&$0$&$0$&  &  \\ \hline
$9$&$0$&$17$&$34$&$48$&$56$&$56$&$48$&$34$&$17$&$0$& \\ \hline
$10$&$0$&$0$&$17$&$17$&$31$&$25$&$31$&$17$&$17$&$0$&$0$  \\ \hline
  \end{tabular}
\end{table}
\renewcommand*{\arraystretch}{1}
\begin{table}[H]
\caption{$E(k,j)$}
\centering
  \begin{tabular}{|c|c|c|c|c|c|c|c|c|c|c|c|c|c|} \hline
\backslashbox{$k$}{$j$}&$0$&$1$&$2$&$3$&$4$&$5$&$6$&$7$&$8$&$9$&$10$\\ \hline 
$0$&$0$& & & & & & & & & &\\ \hline 
$1$&$0$&$1$ & & & & & & & & &\\ \hline 
$2$&$0$&$0$&$1$&  &  &  &  &  &  &  &  \\ \hline
$3$&$0$&$1$&$2$&$2$&  &  &  &  &  &  &  \\ \hline
$4$&$0$&$0$&$1$&$3$&$5$&  &  &  &  &  &  \\ \hline
$5$&$0$&$5$&$10$&$14$&$16$&$16$&  &  &  &  &  \\ \hline
$6$&$0$&$0$&$5$&$15$&$29$&$45$&$61$&  &  &  &  \\ \hline
$7$&$0$&$61$&$122$&$178$&$224$&$256$&$272$&$272$&  &  &  \\ \hline
$8$&$0$&$0$&$61$&$183$&$361$&$585$&$841$&$1113$&$1385$&  &  \\ \hline
$9$&$0$&$1385$&$2770$&$4094$&$5296$&$6320$&$7120$&$7664$&$7936$&$7936$& \\ \hline
$10$&$0$&$0$&$1385$&$4155$&$8249$&$13545$&$19865$&$26985$
&$34649$&$42585$&$50521$  \\ \hline
  \end{tabular}
\end{table}
\renewcommand*{\arraystretch}{1}
\begin{table}[H]
\caption{$F(k,j)$}
\centering
  \begin{tabular}{|c|c|c|c|c|c|c|c|c|c|c|c|c|c|} \hline
\backslashbox{$k$}{$j$}&$0$&$1$&$2$&$3$&$4$&$5$&$6$&$7$&$8$&$9$&$10$\\ \hline 
$0$&$0$& & & & & & & & & &\\ \hline 
$1$&$0$&$1$ & & & & & & & & &\\ \hline 
$2$&$0$&$1$&$1$&  &  &  &  &  &  &  &  \\ \hline
$3$&$0$&$0$&$1$&$2$&  &  &  &  &  &  &  \\ \hline
$4$&$0$&$2$&$4$&$5$&$5$&  &  &  &  &  &  \\ \hline
$5$&$0$&$0$&$2$&$6$&$11$&$16$&  &  &  &  &  \\ \hline
$6$&$0$&$16$&$32$&$46$&$56$&$61$&$61$&  &  &  &  \\ \hline
$7$&$0$&$0$&$16$&$48$&$94$&$150$&$211$&$272$&  &  &  \\ \hline
$8$&$0$&$272$&$544$&$800$&$1024$&$1202$&$1324$&$1385$&$1385$&  &  \\ \hline
$9$&$0$&$0$&$272$&$816$&$1616$&$2640$&$3842$&$5166$&$6551$&$7936$& \\ \hline
$10$&$0$&$7936$&$15872$&$23536$&$30656$&$36976$&$42272$&$46366$
&$49136$&$50521$&$50521$  \\ \hline
  \end{tabular}
\end{table}
\renewcommand*{\arraystretch}{1}
\begin{table}[H]
\caption{$G(k,j)$}
\centering
  \begin{tabular}{|c|c|c|c|c|c|c|c|c|c|c|c|c|c|} \hline
\backslashbox{$k$}{$j$}&$0$&$1$&$2$&$3$&$4$&$5$&$6$&$7$&$8$&$9$&$10$\\ \hline 
$0$&$0$& & & & & & & & & &\\ \hline 
$1$&$0$&$0$ & & & & & & & & &\\ \hline 
$2$&$0$&$1$&$2$&  &  &  &  &  &  &  &  \\ \hline
$3$&$0$&$1$&$1$&$0$&  &  &  &  &  &  &  \\ \hline
$4$&$0$&$0$&$1$&$2$&$2$&  &  &  &  &  &  \\ \hline
$5$&$0$&$1$&$2$&$2$&$1$&$0$&  &  &  &  &  \\ \hline
$6$&$0$&$0$&$1$&$3$&$5$&$6$&$6$&  &  &  &  \\ \hline
$7$&$0$&$3$&$6$&$8$&$8$&$6$&$3$&$0$&  &  &  \\ \hline
$8$&$0$&$0$&$3$&$9$&$17$&$25$&$31$&$34$&$34$&  &  \\ \hline
$9$&$0$&$17$&$34$&$48$&$56$&$56$&$48$&$34$&$17$&$0$& \\ \hline
$10$&$0$&$0$&$17$&$51$&$99$&$155$&$211$&$259$&$293$&$310$&$310$  \\ \hline
  \end{tabular}
\end{table}
\renewcommand*{\arraystretch}{1}
\begin{table}[H]
\caption{$H(k,j)$}
\centering
  \begin{tabular}{|c|c|c|c|c|c|c|c|c|c|c|c|c|c|} \hline
\backslashbox{$k$}{$j$}&$0$&$1$&$2$&$3$&$4$&$5$&$6$&$7$&$8$&$9$&$10$\\ \hline 
$0$&$0$& & & & & & & & & &\\ \hline 
$1$&$0$&$0$ & & & & & & & & &\\ \hline 
$2$&$0$&$1$&$0$&  &  &  &  &  &  &  &  \\ \hline
$3$&$0$&$1$&$-1$&$0$&  &  &  &  &  &  &  \\ \hline
$4$&$0$&$0$&$1$&$0$&$0$&  &  &  &  &  &  \\ \hline
$5$&$0$&$1$&$0$&$0$&$-1$&$0$&  &  &  &  &  \\ \hline
$6$&$0$&$0$&$1$&$1$&$1$&$0$&$0$&  &  &  &  \\ \hline
$7$&$0$&$3$&$0$&$2$&$-2$&$0$&$-3$&$0$&  &  &  \\ \hline
$8$&$0$&$0$&$3$&$3$&$5$&$3$&$3$&$0$&$0$&  &  \\ \hline
$9$&$0$&$17$&$0$&$14$&$-6$&$6$&$-14$&$0$&$-17$&$0$& \\ \hline
$10$&$0$&$0$&$17$&$17$&$31$&$25$&$31$&$17$&$17$&$0$&$0$  \\ \hline
  \end{tabular}
\end{table}
}

\end{document}